\documentclass[a4paper,oneside]{amsart}

\usepackage[utf8]{inputenc} 
\usepackage[T1]{fontenc}
\usepackage[english]{babel}
\usepackage{amssymb}
\usepackage{graphicx}
\usepackage{listings}
\usepackage{mathtools}
\usepackage{mathrsfs}
\usepackage{color}
\usepackage{geometry}
\usepackage{titlesec}
\usepackage{eufrak}
\usepackage{hyperref}

\newcommand{\RR}{\mathbb{R}}
\newcommand{\CC}{\mathbb{C}}
\newcommand{\NN}{\mathbb{N}}

\renewcommand{\ge}{\geqslant}
\renewcommand{\le}{\leqslant}

\newcommand{\R}{\mathrm{Re}}
\newcommand{\I}{\mathrm{Im}}

\newtheorem{Th}{Theorem}[section]
\newtheorem{Prop}[Th]{Proposition}
\newtheorem{Lem}[Th]{Lemma}
\theoremstyle{definition}
\newtheorem{definition}[Th]{Definition}

\newtheorem{Claim}[Th]{Claim}

\theoremstyle{remark}
\newtheorem{Rq}[Th]{Remark}

\titleformat{\section}[block]
{\centering\scshape\Large\bfseries}
{\normalfont\textbf{\thesection}.}{0.5em}{}

\titleformat{\subsection}[runin]
{\normalfont\bfseries}
{\thesubsection.}{0.5em}{}[.]
\titlespacing{\subsection}{1pc}{1.5ex plus .1ex minus .2ex}{0.5pc}

\titleformat{\subsubsection}[runin]
{\normalfont\bfseries}
{\thesubsubsection.}{0.5em}{}[.]
\titlespacing{\subsubsection}{1pc}{1.5ex plus .1ex minus .2ex}{0.5pc}
\numberwithin{equation}{section}

\titleformat{\paragraph}[runin]
{\normalfont\itshape}
{\theparagraph.}{0.5em}{}[.]
\titlespacing{\paragraph}{1pc}{1.5ex plus .1ex minus .2ex}{0.5pc}

\newcounter{Hyp}

\newcounter{Hypo}

\title[Spatial decay of multi-solitons of the gKdV and NLS equations]{Spatial decay of multi-solitons of the generalized Korteweg-de Vries and nonlinear Schrödinger equations}

\author{Raphaël Côte}
\address{Institut de Recherche Mathématique Avancée UMR 7501, Université de Strasbourg, Strasbourg, France}
\email{cote@math.unistra.fr}

\author{Xavier Friederich}
\address{Institut de Recherche Mathématique Avancée UMR 7501, Université de Strasbourg, Strasbourg, France}
\email{friederich@math.unistra.fr}

\subjclass[2010]{Primary 35B40, 35Q53, 35Q55; Secondary 35B65, 37K40.}
\keywords{Generalized Korteweg-de Vries equations; nonlinear Schrödinger equations; solitons; multi-solitons; spatial decay.}

\begin{document}

\begin{abstract}
We study pointwise spatial decay of multi-solitons of the generalized Korteweg-de Vries equations. We obtain that, uniformly in time, these solutions and their derivatives decay exponentially in space on the left of and in the solitons region, and prove rapid decay on the right of the solitons. 

We also prove the corresponding result for multi-solitons of the nonlinear Schrödinger equations, that is, exponential decay in the solitons region and rapid decay outside.
\end{abstract}

\maketitle

\section{Introduction}

\subsection{(gKdV) multi-solitons}

We consider the generalized Kor\-te\-weg-de Vries equations 
\begin{gather}\tag{gKdV}\label{gKdV}
\partial_t u+\partial_x(\partial^2_xu+u^p)=0
\end{gather}
where $(t,x) \in \RR\times\RR$ and $p\ge 2$ is an integer. 

Recall that \eqref{gKdV} admits a family of explicit traveling wave solutions $R_{c_0,x_0}$ indexed by $(c_0,x_0) \in \RR^*_+\times\RR$. Let $Q$ be the unique (up to translation) positive solution in $H^1(\RR)$ (known also as \textit{ground state}) to the following stationary elliptic problem associated with (gKdV)
\[ Q''+Q^p=Q, \]
given by the explicit formula
\[ Q(x)=\left(\frac{p+1}{2\cosh^2\left(\frac{p-1}{2}x\right)}\right)^{\frac{1}{p-1}}. \]
Then for all $c_0>0$ (velocity parameter) and $x_0\in\RR$ (translation parameter), 
\begin{equation}\label{soliton1} 
R_{c_0,x_0}(t,x)=Q_{c_0}(x-c_0t-x_0)
\end{equation} 
is a global traveling wave solution of (gKdV) classically called the \textit{soliton} solution, where $Q_{c_0}(x)=c_0^{\frac{1}{p-1}}Q(\sqrt{c_0}x)$.

We are interested here in qualitative properties of the \emph{multi-solitons}, which are solutions to \eqref{gKdV} built upon solitons, and are defined as follows.

\begin{definition} \label{def:multi_sol}
Let $N\ge 1$ and consider $N$ solitons $R_{c_j,x_j}$ as in \eqref{soliton1} with speeds $0<c_1<\cdots<c_N$. A \textit{multi-soliton} \textit{in $+\infty$} (resp. \textit{in $-\infty$}) associated with the $R_{c_j,x_j}$ is an $H^1$-solution $u$ of (gKdV) defined in a neighborhood of $+\infty$ (resp. $-\infty$) and such that \begin{equation}\label{def_multisol}
\left\|u(t)-\sum_{j=1}^NR_{c_j,x_j}(t)\right\|_{H^1}\rightarrow 0,\qquad \text{as } t\to +\infty \text{ (resp. as } t\to-\infty\text{)}.
\end{equation}
\end{definition}

The study of multi-solitons is motivated by the soliton resolution conjecture, which asserts that generic solutions to nonlinear dispersive equations should behave as a sum of decoupled solitons for large times. Such a resolution was obtained for the original Korteweg-de Vries equation (KdV), corresponding to $p=2$; and the modified Korteweg-de Vries equations (mKdV), corresponding to $p=3$. We refer to \cite{eckhaus,schuur,chen} for instance.

In the context of \eqref{gKdV}, multi-solitons were first constructed for (KdV) and (mKdV), via the inverse scattering transform. It provides explicit formulas: for (KdV), it writes
\begin{equation} \label{eq:Nsol_kdv} u=6\frac{\partial^2}{\partial x^2}\ln\det M,
\end{equation}
where $M(t,x)$ is the $N\times N$-matrix with generic entry
\[ M_{(i,j)}(t,x)=\delta_{i,j}+2\frac{(c_ic_j)^{\frac{1}{4}}}{\sqrt{c_i}+\sqrt{c_j}}e^{\frac{1}{2}\left(\sqrt{c_i}(x-c_it)+x_i+\sqrt{c_j}(x-c_jt)+x_j\right)} \]
and $\delta_{i,j}$ stands for the Kronecker symbol: see \cite{ggkm}, \cite[section 6]{miura}, or \cite{hirota}. We refer to Schuur \cite[chapter 5, (5.5)]{schuur} and to Lamb \cite[chapter 5]{lamb} for a formula for (mKdV). One can observe from these formulas that such a solution $u$ in \eqref{eq:Nsol_kdv} is a multi-soliton both in $\pm\infty$ \cite{miura}, with the same velocity parameters $c_i$ in $\pm\infty$,  but with distinct translation parameters whose shifts can be quantified in terms of the $c_i$.

\bigskip

The construction of multi-solitons was subsequently extended to many non integrable models, and \eqref{gKdV} is probably the equation for which their study has been most developed. One important result concerns the complete classification of the multi-solitons, depending on the value of $p$ with respect to $5$: recall that for $p<5$, \eqref{gKdV} is $L^2$-subcritical and solitons are stable, $p=5$ is the $L^2$ critical equation, and for $p>5$, \eqref{gKdV} is $L^2$-supercritical and solitons are unstable, linearly and nonlinearly. Let us recall it below.

\begin{Th}[Martel \cite{martel}; Côte, Martel and Merle \cite{cmm}; Combet \cite{combetgkdv}]\label{th_multi_known}
Let $p>1$ be an integer and let $N\ge 1$, $0<c_1<\dots<c_N$, and $x_1,\dots, x_N\in\RR$.

If $p\le 5$, there exists $T_0\ge 0$ and a unique multi-soliton $u\in\mathscr{C}([T_0,+\infty),H^1(\RR))$  (in $+\infty$) associated with the $R_{c_i,x_i}$, $i\in\{1,\dots,N\}$.

If $p>5$, there exists a one-to-one map $\Phi$ from $\RR^N$ to the set of all $H^1$-solutions of \eqref{gKdV} defined in a neighborhood of $+\infty$ such that $u$ is a multi-soliton in $+\infty$ associated with the $R_{c_i,x_i}$ if and only if there exist $\lambda\in\RR^N$ and $T_0\ge 0$ such that $u_{|[T_0,+\infty)}=\Phi(\lambda)_{|[T_0,+\infty)}$.

Moreover, in each case, $u$ belongs to $\mathscr{C}([T_0,+\infty),H^s(\RR))$ for all $s\ge 0$, and there exist $\theta>0$ (depending on the $c_i$ but independent of $s$) and positive constants $\lambda_s$ such that for all $s\ge 0$,
\begin{equation}\label{CVexp_Hs}
\forall  t \ge T_0, \quad \left\|u(t)-\sum_{j=1}^NR_{c_j,x_j}(t)\right\|_{H^s}\le \lambda_s e^{-\theta t}.
\end{equation}
\end{Th}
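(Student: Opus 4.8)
The plan is to follow the now-standard compactness-plus-modulation scheme pioneered by Merle and Martel: construct the multi-soliton as a limit of solutions solved backward from well-chosen final data, and then bootstrap the regularity. Concretely, fix an increasing sequence $T_n\to+\infty$ and let $u_n$ solve \eqref{gKdV} backward from the final datum $u_n(T_n)=\sum_{j=1}^N R_{c_j,x_j}(T_n)=:R(T_n)$. Local $H^1$ well-posedness produces $u_n$ on a maximal interval; the whole difficulty is to show, uniformly in $n$, that $u_n$ survives on a fixed interval $[T_0,+\infty)\cap(-\infty,T_n]$ with small, exponentially decaying distance to $R$.

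\emph{Uniform exponential estimate and compactness.} On the interval where $\|u_n(t)-R(t)\|_{H^1}$ stays small, I would introduce modulation parameters $(c_j(t),x_j(t))$ by imposing orthogonality of $\varepsilon_n:=u_n-\sum_j R_{c_j(t),x_j(t)}$ to suitable directions, which yields good control of the modulation speeds. The decisive tool is a Lyapunov functional $\mathcal F_n(t)=E(u_n(t))+\sum_j c_j \mathcal M_j(t)$, where $\mathcal M_j$ is a mass localized around the $j$-th soliton through cutoffs transported at intermediate speeds $\sigma_j\in(c_{j-1},c_j)$. Two ingredients combine: almost-monotonicity of the localized masses (Kato-type identities), which absorbs the error from the interaction of well-separated solitons and is where the ordering $c_1<\dots<c_N$ enters; and coercivity of $\mathcal F_n$ near $R$, coming from the spectral properties of the operator obtained by linearizing about $Q$, giving $\|\varepsilon_n(t)\|_{H^1}^2\lesssim \mathcal F_n(t)-\mathcal F_n(T_n)+(\text{exponentially small terms})$. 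For $p<5$ this closes into $\|u_n(t)-R(t)\|_{H^1}\le C e^{-\theta t}$ on $[T_0,T_n]$ with $C,\theta,T_0$ independent of $n$, the rate $\theta$ being governed by $c_1$ and the speed gaps. A diagonal/compactness argument (weak $H^1$ limit of $u_n(T_0)$ together with continuous dependence) then produces $u$ solving \eqref{gKdV} on $[T_0,+\infty)$ and inheriting \eqref{def_multisol} with the exponential rate, i.e. \eqref{CVexp_Hs} for $s=1$.

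\emph{Uniqueness, the supercritical family, and higher regularity.} For $p\le5$, given two such multi-solitons, their difference solves a linearized equation, and the same coercive functional run together with a Gronwall argument from $+\infty$ forces them to coincide. For $p>5$ the linearized operator about each soliton has exactly one unstable direction, so coercivity degenerates along an $N$-dimensional space; the construction then yields a whole family, and a topological/fixed-point argument parametrizes it by $\lambda\in\RR^N$, producing the one-to-one map $\Phi$. To reach $H^s$ I would exploit the hierarchy of conservation laws of \eqref{gKdV}: at each level there is a conserved generalized energy whose second variation about $R$ is coercive in $H^s$ modulo the same finite-dimensional obstructions, so redoing the a priori estimate at that level gives \eqref{CVexp_Hs}. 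Crucially, $\theta$ is independent of $s$ for a structural reason: in every such estimate the exponential gain originates from the spatial separation of solitons travelling at speeds bounded below by $c_1$, a mechanism insensitive to the regularity index.

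\emph{Main obstacle.} The crux is the uniform-in-$n$ Lyapunov/monotonicity estimate: one must simultaneously close the modulation system, control the interaction errors through the localized-mass monotonicity, and invoke coercivity, all with constants independent of $n$. The degenerate case $p=5$ and the genuinely unstable case $p>5$ each demand an additional layer — respectively a refined virial-type correction to recover coercivity, and a careful treatment of the unstable modes to build $\Phi$.
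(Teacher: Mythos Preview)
Your $H^1$-level sketch is essentially the standard Martel/C\^ote--Martel--Merle compactness-plus-modulation scheme, and the paper (which does not reprove this theorem but merely quotes it, apart from some material in the appendix) would not quarrel with that part.

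The genuine gap is in your higher-regularity step. You write that you would ``exploit the hierarchy of conservation laws of \eqref{gKdV}: at each level there is a conserved generalized energy whose second variation about $R$ is coercive in $H^s$''. But \eqref{gKdV} has such a hierarchy only in the integrable cases $p=2,3$; for general $p$ there is no infinite family of conserved quantities, so this route is unavailable. The actual argument (due to Martel, and revisited in the paper's Appendix~\ref{app:a3} to track the growth of $\lambda_s$) is a direct bootstrap on $\int(\partial_x^s z)^2\,dx$: differentiating in time, the dangerous term $\frac{2s+1}{2}\int(\partial_x^s z)^2\partial_x(R^{p-1})\,dx$ is cancelled by adding the lower-order correction $-\frac{2s+1}{3}p\int(\partial_x^{s-1}z)^2 R^{p-1}\,dx$ to the functional, after which all remaining terms are controlled by $\|z\|_{H^{s-1}}$ and the already-established $H^{s-1}$ estimate. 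The rate $\theta$ is then preserved not because of an abstract ``spatial-separation mechanism'' but simply because, once the top-order term is cancelled, the resulting inequality for $F_s$ involves $\|z\|_{H^{s-1}}^2 e^{-2\theta t}$-type source terms and integrates to the same decay rate; see \eqref{est_Fs}. You should replace the conservation-law argument by this concrete correction-term computation.
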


\begin{Rq}
One can take 
\begin{equation} \label{def:theta}
\theta:=\frac{1}{32}\min\left\{c_1,\min_{j=1,\dots,N-1}\left\{c_{j+1}-c_j\right\}\right\}^{\frac{3}{2}}
\end{equation}
in the above theorem.
\end{Rq}

\bigskip

In this article, we go on studying qualitative properties of the multi-solitons and we are concerned with the behavior in space (at fixed time) of the multi-solitons of \eqref{gKdV}. From formula \eqref{eq:Nsol_kdv}, one can show that (KdV) multi-solitons are exponentially localized away from  the centers of the involved solitons (as solitons are); this is also the case for (mKdV) multi-solitons. Our goal is to extend this property to multi-solitons of \eqref{gKdV}, that is, non integrable equations.

Our main result is that there is indeed exponential decay in the solitons region and on the left of the train of solitons, and rapid decay on the right of it. Let us state this more precisely.

\begin{Th}\label{th_decay}
Fix the parameters $0<c_1<\dots<c_N$ and $x_1,\dots,x_N\in\RR$, and let  $u$ be a multi-soliton of \eqref{gKdV} associated with the solitons $R_{c_j,x_j}$, as in Theorem \ref{th_multi_known} ($u$ is unique if $p\le 5$). 

Let $\beta>c_N$.  Then there exist $T_1>0$ and $\kappa>0$  such that for all $s\in\NN$, 
\begin{enumerate}
\item \emph{(Exponential decay in the solitons region and to its left)}
 there exists $C_s>0$ such that for all $t\ge T_1$,
\begin{equation}\label{dec_exp_mid}
\forall x \le \beta t, \quad \left|\partial^s_x u(t,x)\right|\le C_{s}\sum_{j=1}^Ne^{-\kappa |x-c_jt|};
\end{equation}
\item \emph{(Algebraic decay to the right of the last soliton)}
for all $n\in\NN$, there exists $C_{s,n}>0$ such that for all $t\ge T_1$,
\begin{equation}\label{dec_right}
\forall x > \beta t, \quad \left|\partial^s_x u(t,x)\right|\le \frac{C_{s,n}}{(x-\beta t)^n}.
\end{equation}
\end{enumerate}
\end{Th}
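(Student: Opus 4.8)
The plan is to subtract off the explicit solitons and prove the two bounds for the remainder. Write $R(t) := \sum_{j=1}^N R_{c_j,x_j}(t)$ and $\varepsilon(t) := u(t) - R(t)$. Since $Q$ and all its derivatives decay like $e^{-|y|}$, each soliton obeys $|\partial_x^s R_{c_j,x_j}(t,x)| \le C_s e^{-\kappa|x - c_j t|}$ for any $\kappa < \min_j \sqrt{c_j}$, once $t$ is large enough to absorb the fixed shifts $x_j$; moreover, for $x > \beta t$ one has $e^{-\kappa|x-c_j t|} \le e^{-\kappa(\beta - c_j)t}e^{-\kappa(x-\beta t)} \le C_{n}(x - \beta t)^{-n}$, so $R$ already satisfies both \eqref{dec_exp_mid} and \eqref{dec_right}. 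It therefore suffices to establish the two estimates with $u$ replaced by $\varepsilon$. The inputs I would use are Theorem \ref{th_multi_known}, giving $\|\varepsilon(t)\|_{H^s} \le \lambda_s e^{-\theta t}$ for every $s$, and the embedding $H^1 \hookrightarrow L^\infty$, which upgrades this to $\|\partial_x^s \varepsilon(t)\|_{L^\infty} \le C_s e^{-\theta t}$. This uniform-in-$x$ temporal smallness already controls $\varepsilon$ near the right edge $x \approx \beta t$ and, via $R$, guarantees that $u$ itself is small in the whole region $\{x \ge \gamma t\}$ for any $\gamma > c_N$, a fact I will feed into the nonlinear estimates below.

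The engine of the proof is a family of weighted energy estimates in moving frames, designed around the fact that the group velocity of the Airy part $\partial_t + \partial_x^3$ equals $-3k^2 \le 0$: dispersion propagates to the left, so ahead of the fastest soliton the solution is smooth and rapidly decaying, while behind the train only exponential soliton tails survive. For a weight $\psi(x - \gamma t)$ with $\gamma > 0$, differentiating $\int (\partial_x^s \varepsilon)^2 \psi\,dx$ along \eqref{gKdV} and integrating by parts yields a dominant dissipative contribution
\begin{equation*}
-\int \bigl(\gamma (\partial_x^s \varepsilon)^2 + 3(\partial_x^{s+1}\varepsilon)^2\bigr)\psi'\,dx,
\end{equation*}
together with a lower-order term weighted by $\psi'''$, an inhomogeneity coming from $R$ (localized near the solitons, hence exponentially small in the frame), and a nonlinear term of the schematic form $\int u^{p-1}(\partial_x^s\varepsilon)^2\psi'$. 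The crucial observation is that $\psi'$ is concentrated away from the solitons, where $u$ is small by the previous paragraph, so the nonlinear term is absorbed by the dissipative one. Choosing $\psi$ so that $\psi'''$ is dominated by $\psi'$ closes a single such estimate; the hierarchy in $s$ is handled by commuting $\partial_x^s$ through the equation and treating the lower-order weighted quantities inductively.

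For the algebraic bound \eqref{dec_right} I would fix $c_N < \gamma < \beta$ and run the above with polynomially growing weights $\psi_k(y) \approx (y)_+^{2k}$ (suitably truncated to keep the integrals finite, then passing to the limit). Here $\psi_k''' \approx \psi_{k-1}'$, so the lower-order term is controlled by the bound at order $k-1$, giving a uniform-in-$t$ control of $\int (\partial_x^s\varepsilon)^2 \psi_k(x-\gamma t)\,dx$ by induction on $k$. Restricting to $x > \beta t$, where $(x-\beta t)^{2k} \le \psi_k(x-\gamma t)$ since $\beta > \gamma$, and combining the bounds at orders $s$ and $s+1$, an Agmon-type inequality converts weighted $L^2$ control into the pointwise bound $|\partial_x^s\varepsilon(t,x)| \le C_{s,n}(x-\beta t)^{-n}$; conceptually this is the nonlinear reflection of the super-polynomial decay of the Airy kernel to the right. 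For the exponential bound \eqref{dec_exp_mid} I would instead use exponential weights $e^{-2\kappa(x-\gamma_m t)}$ with speeds interlacing the $c_j$ (so $\gamma_m \in (c_m, c_{m+1})$ and $\gamma_0 < c_1$), proving exponential localization of $\varepsilon$ to the left of each $\gamma_m t$; the resulting regional estimates then assemble into the single profile $\sum_j e^{-\kappa|x - c_j t|}$.

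The main obstacle I anticipate is organizing these weighted estimates into a genuinely closed argument rather than a formal one. The nonlinear term is harmless only where $u$ is small, i.e. precisely in the region where the weight is large and where we are trying to prove decay; making this rigorous forces a careful induction — on the weight order $n$ for the polynomial estimates, and on the interval index $m$ for the exponential ones — in which each step feeds the decay already obtained into the nonlinear remainder of the next. A related subtlety is the direction of the time integration and the role of the coercivity $\psi' \gtrsim \psi$ (available for exponential weights), which must be arranged so that the weighted norms stay bounded uniformly in $t \ge T_1$ and uniformly in the truncation parameter, using the temporal decay of Theorem \ref{th_multi_known} as the only external input. Propagating everything to all derivative orders $s$ at once, with a rate $\kappa$ independent of $s$, is the final bookkeeping hurdle.
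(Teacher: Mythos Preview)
Your overall plan---subtract $R$, then run moving-frame weighted energy estimates on $\varepsilon$---matches the paper's, and your treatment of the soliton contributions and of the solitons region is fine (in fact the paper is more economical there: for $\alpha t\le x\le\beta t$ it simply converts $\|\varepsilon\|_{H^{s+1}}\le\lambda_{s+1}e^{-\theta t}$ into the profile $\sum_j e^{-\kappa|x-c_jt|}$, with no interlacing needed). The genuine gap is in the right region.

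On $\{x>\beta t\}$ the ``dominant dissipative contribution'' you identify cannot be used the way you describe. If you integrate forward from $T_1$, the dissipative sign is correct but you have no control of the weighted quantity at $t=T_1$; truncating the weight makes the integral finite at $T_1$, but the bound is of size $M^{2k}$ in the truncation scale and does not survive the limit. The paper (see its comments before Proposition~\ref{prop_decroissance}) explicitly notes that the monotonicity mechanism breaks down on the right for exactly this reason. The only workable direction is to integrate from $t$ to $+\infty$, using $\|\varepsilon(t)\|_{H^s}\to 0$; but then the formerly good terms $\int (\partial_x^s\varepsilon)^2\psi'$ and $\int(\partial_x^{s+1}\varepsilon)^2\psi'$ appear on the right-hand side with no sign. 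Your induction in $k$ via $\psi_k'''\approx\psi_{k-1}'$ handles only the third-derivative remainder; it does not touch these main terms, which carry $\partial_x^{s+1}\varepsilon$ with weight $\psi_k'$ and force a loss of one derivative per weight increment.

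The paper closes precisely this gap with a triangular scheme: it bounds $|\frac{d}{dt}J_{s,x_0}|$ (no sign), integrates to $+\infty$, and then integrates in the translation parameter $x_0\in[0,\infty)$, which turns $\psi(x-x_0-\beta t)$ on the left into its antiderivative $\psi_{[1]}(x-\beta t)$ and $\psi'(x-x_0-\beta t)$ on the right into $\psi(x-\beta t)$. The resulting implication is $P(\overline s+1,\psi)\Rightarrow P(\overline s,\psi_{[1]})$: each gain of one antiderivative in the weight costs one derivative in $s$, and the base case is the unweighted bound \eqref{CVexp_Hs} at all $s$. Your proposal is missing both the backward-in-time integration and this $(s,k)$-coupled (rather than $k$-only) induction; without them the polynomial estimate does not close.
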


\begin{Rq}
Our bounds give $\kappa = O(1/\beta)$ as $\beta \to +\infty$, where the implicit constant depends on the $c_i$; precise rates are stated in Proposition \ref{prop:left} and \ref{prop:soliton}.
\end{Rq}

\begin{Rq}
Theorem \ref{th_decay} can be extended to more general nonlinearities, of the form
\[ \partial_t u + \partial_{x} ( \partial_{xx} u + f(u)) =0 \]
where $f: \RR \to \RR$ is $\mathscr C^\infty$, convex on $\RR^+$ and $f(0) = f'(0)=0$, and the velocities $c_j$ are such that there is a $\mathcal C^1$ map $\mathcal V_j \to \mathscr{S} (\RR)$, $c \mapsto Q_c$ defined on some neighborhood $\mathcal V_j$ of $c_j$, such that $\partial_{xx} Q_c + f(Q_c) = c Q_c$ and 
\begin{equation} \label{eq:noncrit}
\left. \frac{d}{dc} \int Q_c^2 \; dx \right|_{c=c_j} \ne 0.
\end{equation}
These conditions ensure the existence of multi-solitons as done in \cite[Theorem 3]{cmm} (as noted there, condition \eqref{eq:noncrit} can probably be avoided).
\end{Rq}

\subsection{(NLS) multi-solitons}

We extend the decay properties in Theorem \ref{th_decay} to the $d$-dimensional nonlinear Schrödinger equation
\begin{gather}\tag{NLS}\label{NLS}
i\partial_t u+\Delta u+|u|^{p-1}u=0
\end{gather}
where $(t,x)$ is taken in $\RR\times\RR^d$ ($d\ge 1$), $u$ is a complex valued function, and the nonlinearity is $H^1$-subcritical, that is $1<p<1+\frac{4}{(d-2)_+}$.

Due to the $H^1$-subcritical assumption, for all $\omega>0$, there exists a unique positive radial solution $Q_\omega\in H^1(\RR^d)$ to the stationary equation
\[ \Delta Q_\omega+Q_\omega^p=\omega Q_\omega. \]
Moreover, if $z\mapsto |z|^{p-1}z$ is $\mathscr{C}^s$ on $\CC$ (as an $\RR$-differentiable function), then $Q_{\omega}$ is $\mathscr{C}^{s+2}$ on $\RR^d$ and, for all $s'\in\{0,\dots,s+2\}$, there exists a constant $C_{s'}$ depending on $s'$ such that for each multi-index $\sigma\in\NN^d$ with $|\sigma|=s'$ the following exponential decay property holds \cite{berestycki}\footnote{We denote $|x|$ the canonical euclidian norm of $x \in \RR^d$ (or the modulus in $\CC$), and $|\sigma|$ the length of the multi-index $\sigma \in \NN^d$; the context makes it unambiguous.}

\begin{equation}\label{exp_decay_ground_state}
\forall x\in\RR^d,\qquad \left|\partial^\sigma Q_{\omega}(x)\right|\le C_{s'} e^{-\sqrt{\omega}|x|}.
\end{equation}

We can then ensure the existence of soliton solutions for \eqref{NLS}: given parameters
\[ \omega>0,  \quad \gamma\in\RR,  \quad x^0\in\RR^d,  \quad \text{and } v\in\RR^d,\]
the function
\[ R_{\omega,v,\gamma,x^0}(t,x) =  Q_{\omega}(x-x^0-vt)e^{i\big(\frac{1}{2}v\cdot x+\big(\omega-\frac{|v|^2}{4}\big)t+\gamma\big)}\]
satisfies \eqref{NLS}.

Concerning the construction of multi-solitons of \eqref{NLS}, let us recall

\begin{Th}[Merle \cite{merle_NLS_crit}; Martel and Merle \cite{martel_merle_NLS}, Côte, Martel and Merle \cite{cmm}; Côte and Friederich \cite{cf}]\label{th_multi_known_NLS}
Let $\displaystyle 1 < p < 1+\frac{4}{(d-2)_+}$. Let $N\ge 1$ and fix for all $j\in\{1,\dots,N\}$
\[ \omega_j>0,  \quad \gamma_j\in\RR,  \quad x_j^0\in\RR^d,  \quad \text{and } v_j\in\RR^d \text{ such that for all } j\ne j',\quad v_j\neq v_{j'}.\]
There exists $T_0\ge 0$ and a solution $u\in\mathscr{C}([T_0,+\infty),H^s(\RR))$ of \eqref{NLS} with $s = \max(1,\lfloor p-1 \rfloor)$, and a positive constant $\lambda_s$ such that
\begin{equation}\label{est_Hs_NLS}
 \left\|u(t)-\sum_{j=1}^NR_{\omega_j,v_j,\gamma_j,x_j^0}(t)\right\|_{H^s} \le \lambda_s e^{-\theta t} \quad\text{as } t\to +\infty.
\end{equation}
Moreover, if $p$ is an odd integer, then $u$ belongs to $\mathscr{C}([T_0,+\infty),H^s(\RR))$ and \eqref{est_Hs_NLS} holds for all $s \ge 0$. 
\end{Th}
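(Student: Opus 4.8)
The plan is to follow the now-standard compactness scheme for constructing multi-solitons and to recover the exponential rate by a uniform bootstrap argument. First I fix an increasing sequence $T_n\to+\infty$ and, for each $n$, solve \eqref{NLS} backwards in time from the final data $u_n(T_n)=\sum_{j=1}^N R_{\omega_j,v_j,\gamma_j,x_j^0}(T_n)$. Local well-posedness in $H^s$ with $s=\max(1,\lfloor p-1\rfloor)$ produces approximate solutions $u_n$ on an interval ending at $T_n$; the whole point is to show they are actually defined on a common interval $[T_0,T_n]$ and satisfy the uniform estimate $\|u_n(t)-\sum_j R_{\omega_j,v_j,\gamma_j,x_j^0}(t)\|_{H^s}\le\lambda_s e^{-\theta t}$ for all $t\in[T_0,T_n]$, with $T_0$, $\lambda_s$, $\theta$ independent of $n$. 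Once this is in hand, the uniform $H^s$ bound together with the weak continuity of the \eqref{NLS} flow lets me extract a weak-$\ast$ limit $u=\lim u_n$ along a subsequence, which is a genuine solution on $[T_0,+\infty)$ and inherits \eqref{est_Hs_NLS}.

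The heart of the matter is the uniform bound, obtained by a bootstrap. Writing $\varepsilon_n=u_n-\sum_j R_{\omega_j,v_j,\gamma_j,x_j^0}$, I introduce modulation parameters on each soliton (phase, translation, and, away from the subcritical stable regime, also scaling and velocity) so as to impose orthogonality conditions that neutralize the symmetry-generated near-kernel directions of the linearized operators. Because the velocities are distinct, $v_j\ne v_{j'}$, the solitons separate linearly in time, so localized versions of the conserved mass and energy—cut off by smooth weights transported at intermediate speeds—are almost conserved, with errors that are exponentially small thanks to the decay \eqref{exp_decay_ground_state}. Assembling these localized almost-conservation laws into a single Lyapunov functional that is coercive on $\varepsilon_n$ modulo the orthogonality directions yields the differential inequality that closes the bootstrap and produces the $e^{-\theta t}$ rate.

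The main obstacle is the coercivity step when some solitons are linearly unstable, which occurs in the $L^2$-supercritical range $p>1+\tfrac{4}{d}$ (and must be treated with extra care at the critical exponent $p=1+\tfrac{4}{d}$). There the localized energy fails to be coercive, since the linearization around each such $R_{\omega_j,v_j,\gamma_j,x_j^0}$ carries finitely many exponentially growing directions. The remedy is to supplement the orthogonality conditions with a topological argument à la Côte--Martel--Merle: one treats the unstable components of $\varepsilon_n(T_n)$ as free parameters and uses a Brouwer-type no-return argument to select final data for which the solution does not escape, so that the bootstrap estimate survives all the way down to $T_0$. Finally, when $p$ is an odd integer the nonlinearity $z\mapsto|z|^{p-1}z$ is a polynomial in $z$ and $\bar z$, hence smooth; persistence of regularity for \eqref{NLS}, combined with the all-order decay of $Q_\omega$ in \eqref{exp_decay_ground_state}, propagates the weighted energy estimates to every $H^s$ and gives \eqref{est_Hs_NLS} for all $s\ge 0$.
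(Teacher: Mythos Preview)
The paper does not give its own proof of this theorem: it is a known result, attributed in the statement to Merle~\cite{merle_NLS_crit}, Martel--Merle~\cite{martel_merle_NLS}, C\^ote--Martel--Merle~\cite{cmm}, and C\^ote--Friederich~\cite{cf}, and simply recalled here as background for Theorem~\ref{th_decay_NLS}. So there is no proof in the paper to compare against.

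That said, your sketch is a faithful outline of how the result is actually established in those references: the backward-in-time compactness scheme with final data equal to the sum of solitons, the bootstrap via a localized Lyapunov functional built from mass and energy (exploiting the separation $v_j\neq v_{j'}$ and the decay~\eqref{exp_decay_ground_state}), modulation to secure coercivity, the topological selection of final data in the $L^2$-supercritical regime to control the unstable directions, and persistence of regularity when the nonlinearity is smooth. This is the right narrative; just be aware that in this paper the theorem functions as an input, not as something to be reproved.
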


\bigskip

We now state algebraic decay of the multi-solitons of \eqref{NLS} outside the solitons region.

\begin{Th}\label{th_decay_NLS}
Assume that $p$ is an odd integer such that $1<p<1+\frac{4}{(d-2)_+}$. Let $N\ge 1$ and fix for all $j\in\{1,\dots,N\}$
\[ \omega_j>0,  \quad \gamma_j\in\RR,  \quad x_j^0\in\RR^d,  \quad \text{and } v_j\in\RR^d \text{ such that for all } j\ne j',\quad v_j\neq v_{j'}, \]
and let $u$ be a multi-soliton associated with these parameters as in Theorem \ref{th_multi_known_NLS}.

Let $\beta>\max\{|v_j|,\;j=1,\dots,N\}$.  There exists $T_1>0$ such that for all $s\in\NN^d$, for all $n\in\NN$, there exists $C_{s,n}>0$ such that for all $t\ge T_1$,
\begin{equation}\label{dec_right_NLS}
\forall |x| > \beta t, \quad \left|\partial^s u(t,x)\right|\le \frac{C_{s,n}}{(|x|-\beta t)^n}.
\end{equation}
\end{Th}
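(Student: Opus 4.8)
The plan is to adapt the exterior (rapid) decay argument of Theorem \ref{th_decay}(2) to the Schrödinger setting, the guiding principle being a family of weighted energy estimates, localized to the moving exterior region $\{|x|>\beta t\}$, that are closed by induction on the polynomial order $n$ and on the derivative order $s$. Since $p$ is an odd integer, Theorem \ref{th_multi_known_NLS} provides $u\in\mathscr{C}([T_0,\infty),H^s(\RR^d))$ together with $\|u(t)-\sum_j R_{\omega_j,v_j,\gamma_j,x_j^0}(t)\|_{H^s}\le\lambda_s e^{-\theta t}$ for every $s\ge 0$; by Sobolev embedding this already yields $\|\partial^s(u-\sum_j R_j)(t)\|_{L^\infty}\le C_s e^{-\theta t}$.

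First I would dispose of the solitons. Fix $\beta>\max_j|v_j|$ and $t$ large. On $\{|x|>\beta t\}$ one has $|x-x_j^0-v_jt|\ge(\beta-|v_j|)t-|x_j^0|$, a quantity that is both $\gtrsim t$ and $\gtrsim|x|-\beta t$, so the exponential bound \eqref{exp_decay_ground_state} on $Q_{\omega_j}$ and its derivatives shows that $\sum_j R_j$ and all its derivatives are bounded on this region by $Ce^{-\delta t}e^{-\delta(|x|-\beta t)}$ for some $\delta>0$. Hence on $\{|x|>\beta t\}$ the soliton part already satisfies \eqref{dec_right_NLS} with room to spare, and it remains to prove the same estimate for $u$ itself, absorbing the solitons whenever convenient.

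The core is a localized weighted estimate. Writing $i\partial_t u+\Delta u=-|u|^{p-1}u=:g$, with $g$ smooth (as $p$ is an odd integer) and homogeneous of degree $p$ in $u$, I would introduce, for a smooth increasing weight $\phi_n$ with $\phi_n(\rho)\approx\langle\rho\rangle^{2n}$ supported in $\rho:=|x|-\beta t>0$, the quantities $A_{n,s}(t)=\int\phi_n(|x|-\beta t)\,|\partial^s u(t,x)|^2\,dx$. Using the local conservation laws (mass, momentum, energy) for \eqref{NLS} and integrating by parts, the time derivative of $A_{n,s}$ features the favorable term $-\beta\int\phi_n'|\partial^s u|^2$, arising because the region moves strictly faster than every soliton; against it one must absorb the current terms $\int\phi_n'\,(x/|x|)\cdot\I(\overline{\partial^s u}\,\nabla\partial^s u)$ and the commutator terms generated by $\phi_n''$ and by differentiating the nonlinearity. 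The construction is designed so that $\phi_n'/\phi_n\sim 2n/\rho\to 0$ and $\phi_n''\lesssim\phi_{n-1}'$, whence these terms are controlled either by $\varepsilon A_{n,s}+C_\varepsilon A_{n-1,s'}$ (lower polynomial order) or, via the self-improving structure $g=-|u|^{p-1}u$ together with the global $H^{s'}$-smallness, by an exponentially small forcing. This produces a differential inequality of the schematic form $\frac{d}{dt}A_{n,s}\le-\delta\,\widetilde A_{n,s}+C\big(A_{n-1,s+2}+e^{-\delta t}\big)$, which by induction on $n$ (the base case $n=0$ being the global mass/energy bound) and Grönwall keeps $A_{n,s}(t)$ bounded uniformly in $t\ge T_1$.

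Finally I would upgrade the weighted $L^2$ bounds to the pointwise estimate \eqref{dec_right_NLS}: running the above for all $s$ gives $\int_{|x|>\beta t}\langle|x|-\beta t\rangle^{2n}|\partial^\sigma u|^2\,dx\le C_{n,\sigma}$, and a weighted Sobolev embedding on the exterior region (legitimate for $t$ large, where $|x|$ is smooth away from the origin) converts this into $|\partial^s u(t,x)|\le C_{s,n}(|x|-\beta t)^{-n}$. The step I expect to be the main obstacle is closing the weighted estimate at the energy/derivative level for arbitrary $\beta>\max_j|v_j|$ — in particular absorbing the gradient and $\phi_n''$ commutator terms into the favorable drift without losing a power of $\rho$ — since the naive Cauchy–Schwarz bound on the current term degrades as $\beta\to\max_j|v_j|$; the induction on $n$ combined with the decay $\phi_n'/\phi_n\to 0$ is precisely what is meant to overcome this.
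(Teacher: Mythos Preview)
There is a genuine gap at the closing step. The ``favorable drift'' $-\beta\int\phi_n'|\partial^s u|^2$ is \emph{not} a damping term for $A_{n,s}=\int\phi_n|\partial^s u|^2$: since $\phi_n'/\phi_n\sim 2n/\rho\to 0$, this term is of strictly lower weight than $A_{n,s}$ and cannot absorb the $\varepsilon A_{n,s}$ that your Young splitting $\phi_n'\lesssim\sqrt{\phi_n\phi_{n-1}}$ produces from the current term. What you actually obtain is $\frac{d}{dt}A_{n,s}\le\varepsilon A_{n,s}+C_\varepsilon A_{n-1,s+1}+\dots$, and forward Gr\"onwall then gives only $A_{n,s}(t)\lesssim e^{\varepsilon t}$, never a uniform bound --- regardless of how $\beta$ compares to $\max_j|v_j|$ (this parameter plays no role in the current term). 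Integrating backward from $+\infty$ does not help either, because you work with $u$ and with an unbounded weight: you have neither a priori finiteness of $A_{n,s}(T_1)$ (we only know $u(T_1)\in H^\infty$, not that it has polynomial spatial decay) nor vanishing of $A_{n,s}(+\infty)$ for $n\ge 1$; both are part of the conclusion you want.

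The paper's argument does not rely on any sign. One works with $z=u-R$ (so that $\|z(t)\|_{H^s}\le\lambda_s e^{-\theta t}$ for every $s$) and with a \emph{bounded} weight $\varphi$ shifted by a translation parameter $x_0\ge 0$: set $J_{s,x_0}(t)=\sum_{|\sigma|=s}\int|\partial^\sigma z|^2\varphi(|x|-x_0-\beta t)\,dx$. Then $J_{s,x_0}(t)$ is finite and tends to $0$ as $t\to+\infty$ for free, and one bounds $\bigl|\frac{d}{dt}J_{s,x_0}\bigr|$ (absolute value, no sign exploited) by terms carrying $\varphi'$ with one extra derivative, plus $e^{-\theta t}\sum_{k\le s}J_{k,x_0}$ and a source $e^{-\sqrt\eta x_0}e^{-\theta t}$. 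After integrating from $t$ to $+\infty$, one integrates \emph{in $x_0$ over $[0,\infty)$}: this turns $\varphi$ into its antiderivative $\varphi_{[1]}$ on the left-hand side and $\varphi'$ into $\varphi$ on the right. The net effect is the triangular implication: control of $\int|\partial^\sigma z|^2\varphi$ for $|\sigma|\le\bar s+1$ (all with decay $e^{-\theta t}$) yields control of $\int|\partial^\sigma z|^2\varphi_{[1]}$ for $|\sigma|\le\bar s$. Iterating from a bounded initial $\varphi$ --- where the base case for all $s$ is immediate from $\|z\|_{H^s}\le\lambda_s e^{-\theta t}$ --- produces all polynomial weights, and weighted Sobolev embedding then gives the pointwise bound as you indicate.
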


\begin{Rq}
In the solitons region $\{ x : |x| \le \beta t \}$, one has exponential decay in a similar way as \eqref{dec_exp_mid}. Indeed, as a straighforward consequence of \eqref{est_Hs_NLS}, there exists $\kappa>0$ such that for all $s\in\NN^d$, there exists $C_s>0$ such that
\begin{equation}\label{exp_decay_solitons_region}
\forall |x| \le \beta t, \quad \left|\partial^s u(t,x)\right|\le \sum_{j=1}^Ne^{-\kappa|x-v_jt|}.
\end{equation}
\end{Rq}

\begin{Rq}\label{rq:generalization}
Theorems \ref{th_multi_known_NLS} and \ref{th_decay_NLS} also apply to nonlinear Schrödinger equations 
 \[ i\partial_t u+\Delta u+g(u)=0, \]
where $g:\CC\to \CC$ is a smooth nonlinearity, gauge invariant, that is of the form $g(z) = z f(|z|^2)$ and so that:
\begin{itemize}
\item there exists $p\in\left]1,1+\frac{4}{(d-2)_+}\right[$ such that for all $q\le p$, for all $r=0,\dots,q$, 
\[ \left|\frac{\partial^qg}{\partial_x^r\partial_y^{q-r}}(z)\right|=\mathcal{O}\left(|z|^{p-q}\right) \quad \text{as } |z|\to +\infty. \]
\item the frequencies $\omega_j$ are such that there is a $\mathcal C^1$ map $\mathcal W_j \to \mathscr{S} (\RR)$, $\omega \mapsto Q_\omega$ defined on some neighborhood $\mathcal W_j$ of $\omega_j$, such that
\[ \Delta Q_{\omega}+g(Q_{\omega})=\omega Q_{\omega}, \] 
We refer to Berestycki and Lions \cite{berestycki} for sufficient conditions on $g$ to ensure this condition.
\item the associated linearized operators around the $Q_{\omega_j}$
\[ \begin{array}{cccl}
\mathscr{L}_{\omega_j} : &H^1(\RR^d,\CC)& \to &H^1(\RR^d,\CC)\\
&v = v_1 + iv_2 & \mapsto &- \Delta v + \omega_j v - (f(Q_{\omega_j}^2) v+ 2 Q_{\omega_j}^2 f'(Q^2_{\omega_j}) v_1) 
\end{array} \]
satisfy suitable coercivity assumptions, as in \cite[Hypotheses (H3) and (H4)]{cf}. We refer to \cite[Proposition 1.7]{cf} and reference therein for a sufficient conditions in the case when $Q_{\omega_j}$ is a ground state to satisfy these coercivity conditions.
\end{itemize}
Under these assumptions, the $Q_{\omega_j}$ are exponentially decaying, along with their derivatives, and there exist $T_0$, $\theta>0$, and a multi-soliton $u\in\mathscr{C}([T_0,+\infty),H^\infty(\RR^d))$ of \eqref{NLS} such that for all $s\ge 0$, there exists $C_s>0$ such that
\begin{equation}
\forall t\ge T,\quad \left\|u(t)-\sum_{j=1}^N R_{\omega_j,v_j,\gamma_j,x_j^0}(t)\right\|_{H^s}\le C_se^{-\theta t}.
\end{equation} 
\end{Rq}

\subsection{Comments and strategy of the proof}

Theorems \ref{th_decay} and \ref{th_decay_NLS} show in particular that for each fixed time $t \ge T_1$, the multi-soliton $u(t)$ belongs to the Schwartz space $\mathscr{S}(\RR)$. To our knowledge, these are the first results of quantitative spatial decay in a non integrable setting. 

\bigskip

In \cite{friederich} and \cite[Section 3.1.2, (3.6) and (3.9)]{friederich_these}, the second author defines non dispersive solution of \eqref{gKdV} $u$ at $+\infty$ by the property that for some $\rho >0$,
\[ \int_{x \le \rho t} |u(t,x)|^2 \; dx \to 0 \quad \text{as} \quad t \to +\infty. \]
(such a notion was first developed by Martel and Merle \cite{asympstabcrit,asympstab0} in the vici\-nity of solitons). \cite{friederich} showed that non dispersion is a dynamical characterization of multi-solitons: more precisely, a solution of \eqref{gKdV} which is non dispersive and remains close to a sum of $N$ decoupled solitary waves for positive times is a multi-soliton in $+\infty$. For (KdV), the result is non perturbative: for any solutions (with  sufficiently smooth initial data), non dispersion is equivalent to being a multi-soliton (for (mKdV), breathers may also occur). The convergence \eqref{CVexp_Hs} shows on the other side that multi-solitons are non dispersive indeed. The decay obtained in Theorem \ref{th_decay} provides a quantitative version of this non dispersion. As far as we can tell, the classification of non dispersive solutions of \eqref{NLS} is not known.

\bigskip

Throughout the proofs, $u$ is as in the statement of Theorem \ref{th_decay} or Theorem \ref{th_decay_NLS}, depending on the equation we are studying, and it will be convenient to denote
\begin{equation} \label{def:Ri_z}
R_j:=R_{c_j,x_j}, \quad R:=\sum_{j=1}^NR_j,\quad \text{and} \quad z(t):=u(t)-R(t),
\end{equation}
in the case of the \eqref{gKdV} equation and to use analogous notations when considering \eqref{NLS}.

\bigskip

To prove Theorem \ref{th_decay}, we actually split space into three regions: the region to the left of the solitons, that is for $x \le \alpha t$ for some $\alpha <c_1$; the solitons region $\alpha t \le x \le \beta t$;  and the region to the right of the solitons $x \ge \beta t$.

Exponential decay of the multi-solitons of \eqref{gKdV} on the left of the solitons (for $x\le\alpha t$) follows from revisiting a monotonicity argument set up in \cite[section 2]{friederich} (strengthened from Laurent and Martel \cite{laurent}) and originally developed by Martel and Merle \cite{asympstabcrit}. We take advantage here of the convergence \eqref{def_multisol} and the decay of the solitons, instead of a non dispersion assumption, as it is done in the mentioned references.

In the solitons region $\alpha t \le x \le \beta t$, estimate \eqref{dec_exp_mid} is a direct consequence of the exponental convergence in \eqref{CVexp_Hs}.

The main novelty (and where most of our efforts are focused) concerns the region to the right of the solitons $x\ge\beta t$. The monotonicity argument, linked to the dynamic of the flow of \eqref{gKdV}, does not apply anymore: indeed, it would require some knowledge (non dispersion) at $t \to -\infty$ (or at least near the minimal existence time, as multi-solitons might blow up for the $L^2$-supercritical (gKdV)). From this perspective, the point of Theorem \ref{th_decay} is actually to obtain some information of the behavior of multi-solitons  for large decreasing times. Also notice that it would be sufficient to prove pointwise decay on the region $x\ge\beta t_0$ for one time $t_0$, and then this information would easily be propagated for $t \ge t_0$. This is in line with general statements linked with persistence of regularity and decay of solutions to \eqref{gKdV}, like Kato smoothing  in \cite{kato_smoothingeffect} or Isaza, Linares and Ponce \cite{isaza,isaza1}. Let us also mention \cite{Mun11}\footnote{We thank Y. Martel for pointing to us this reference, upon completion of this work.}, where some polynomial decay was obtained (see Lemma 7.4).

Our strategy in the region $x \ge \beta t$ is as follows. We consider families of integrals of the form
\[ I_{\varphi,s,x_0}(t):=\int_{x \ge \beta t}\left(\partial_x^s z\right)^2(t,x) \varphi(x-x_0-\beta t) \;dx \]
where $\varphi$ is a suitable weight function. We show that variations of $I_{\varphi,s,x_0}$ are essentially controlled by the $I_{\varphi',s',x_0}$ for $s' \in \{ 0, \dots, s+1 \}$, under the  induction hypothesis of an exponential decay in time. Then, by integrations in $t$ and then in $x_0$, together with \eqref{CVexp_Hs} (which provides the base case) and a triangular induction process, we show that we can bound $I_{\varphi,s,0}$ for $\varphi(y) = y^n$ for all $n \in \NN$.

\bigskip

For Theorem \ref{th_decay_NLS}, we develop a similar analysis using integrals of the form
\[  I_{\varphi,s,x_0}(t):= \sum_{\sigma \in \NN^d, |\sigma|=s} \int \left|\partial^\sigma z \right|^2(t,x) \varphi(|x|-x_0 - \beta t) \;dx. \]

When the nonlinearity is not smooth but merely $\mathscr{C}^s$, the multi-solitons still enjoy polynomial decay for the first derivatives, as it is clear from the proof (see Propositions \ref{procede_tri} and \ref{prop_P'_NLS}). The interested reader may compute the precise rates.

\bigskip

We expect that \eqref{gKdV} multi-solitons decay exponentially on the right as well, that is \eqref{dec_exp_mid} holds without the restriction $x \le \beta t$: this seems a natural conjecture as solitons are exponentially localized on both ends. Still, for the time being, estimate \eqref{dec_right} is meaningful; and similarly we conjecture that \eqref{NLS} multi-solitons decay exponentially fast in space.
In any case,  we believe that our strategy is robust and extends to prove rapid algebraic decay for the multi-solitons of other non linear dispersive models.
\bigskip

The article is organized as follows. We first study \eqref{gKdV} multi-solitons: in section 2, we consider the left region $x \le \alpha t$ and the solitons region $\alpha t \le x \le \beta t$; and in section 3, we focus on the right region $x \ge \beta t$. Then, in section 4, we turn to the case of the \eqref{NLS} multi-solitons and prove Theorem \ref{th_decay_NLS}. In the appendix, we provide
some bound on the $H^s$ norm of \eqref{gKdV} solitons and multi-solitons, and in particular, track the constant $\lambda_s$ in \eqref{CVexp_Hs}. 

\section{Decay of the (gKdV) multi-solitons on the left}

\subsection{Decay of the (gKdV) multi-solitons on the left of the first soliton}\label{subsection_left}

The goal of this paragraph is to prove

\begin{Prop}[Exponential decay in large time on the left of the first soliton] \label{prop:left}
Let $0<\alpha<c_1$  and $\kappa_\alpha\in\left(0,\frac{\sqrt{\alpha}}{2}\right)$.
There exists $T_1 \ge T_0$ such that for all $s\in\NN$, there exists $C_s>0$ such that for all $t\ge T_1$,
\begin{equation} \label{prop_dec_exp_left}
\forall x \le \alpha t, \quad \left|\partial^s_x u(t,x)\right|\le C_{s}e^{-\kappa_\alpha|x-\alpha t|}.
\end{equation}
\end{Prop}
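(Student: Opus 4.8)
The plan is to reduce Proposition \ref{prop:left} to a uniform bound on weighted $L^2$-type quantities, obtained by a monotonicity argument in the spirit of \cite{friederich,asympstabcrit,laurent}; the new input is the exponential convergence \eqref{CVexp_Hs} (playing the role of a quantitative non-dispersion property) together with the exponential localization of the solitons $R_j$. Fix $\kappa_\alpha\in(0,\sqrt\alpha/2)$, introduce the decreasing weight $\varphi(y)=e^{-2\kappa_\alpha y}$ and its truncations $\varphi_n$ (equal to $\varphi$ on $[-n,+\infty)$ and constant on $(-\infty,-n]$, suitably smoothed), which are bounded, so that
\[ I_{s,n}(t):=\int \left(\partial_x^s u\right)^2(t,x)\,\varphi_n(x-\alpha t)\,dx \]
is finite for every $t$. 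The goal is to show $I_{s,n}(t)\le C_s$ uniformly in $t\ge T_1$ and in $n$; letting $n\to+\infty$ then gives $\int_{x\le\alpha t}(\partial_x^s u)^2 e^{2\kappa_\alpha(\alpha t-x)}\,dx\le C_s$, whence \eqref{prop_dec_exp_left}.

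First I would treat $s=0$. Differentiating $I_{0,n}$ and integrating by parts using \eqref{gKdV} yields an identity of the form
\[ I_{0,n}'(t) = -3\int u_x^2\,\varphi_n' + \int u^2\left(\varphi_n'''-\alpha\varphi_n'\right) + \frac{2p}{p+1}\int u^{p+1}\varphi_n'. \]
Since $\varphi_n'\le 0$ and, on the exponential part, $\varphi_n'''-\alpha\varphi_n'=(4\kappa_\alpha^2-\alpha)\varphi_n'$ with $4\kappa_\alpha^2<\alpha$ exactly because $\kappa_\alpha<\sqrt\alpha/2$, the first two terms are nonnegative up to a controllable error localized near $y=-n$. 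Thus $I_{0,n}$ is almost increasing, $I_{0,n}'(t)\ge -\mathcal E_n(t)$ with $\mathcal E_n(t):=\frac{2p}{p+1}\int|u|^{p+1}|\varphi_n'|$. Integrating on $[t,t_2]$ and letting $t_2\to+\infty$, the convergence $u\to R$ in $L^2$ and the exponential decay of the solitons give $I_{0,n}(t_2)\to 0$ (because $\int R^2\varphi(\cdot-\alpha t_2)\lesssim e^{-2\kappa_\alpha(c_1-\alpha)t_2}\to 0$, using $c_1>\alpha$ and $\kappa_\alpha<\sqrt{c_1}$), so that $I_{0,n}(t)\le\int_t^{+\infty}\mathcal E_n$.

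The crux is to control $\mathcal E_n$. Writing $u=R+z$ and $|u|^{p+1}\lesssim |R|^{p+1}+\|z\|_{L^\infty}^{p-1}z^2$, the contribution of $R$ is exponentially small in $t$ by soliton decay, while $\int z^2|\varphi_n'|\lesssim I_{0,n}(t)+\int R^2\varphi_n$ and $\|z\|_{L^\infty}\le C\|z\|_{H^1}\le Ce^{-\theta t}$ by \eqref{CVexp_Hs}. This gives $\mathcal E_n(t)\le Ce^{-\theta(p-1)t}I_{0,n}(t)+Ce^{-\gamma t}$ uniformly in $n$, and the resulting inequality $I_{0,n}(t)\le Ce^{-\gamma t}+C\int_t^{+\infty}e^{-\theta(p-1)\tau}I_{0,n}(\tau)\,d\tau$ closes by a Gronwall argument for $T_1$ large, yielding $\sup_{t\ge T_1}I_{0,n}(t)\le C$ uniformly in $n$.

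For $s\ge 1$ I would run the same computation on $\partial_x^s u$, which solves $\partial_t(\partial_x^s u)+\partial_x^3(\partial_x^s u)+\partial_x^{s+1}(u^p)=0$: the linear part reproduces the same good signs (and $-3\int(\partial_x^{s+1}u)^2\varphi_n'\ge 0$ even gives coercive control of the next weighted norm), the top-order nonlinear term closes on $I_{s,n}$ through $\|u\|_{L^\infty}^{p-1}$, and the lower Leibniz terms involve only $\partial_x^k u$ with $k\le s$ and are absorbed using the weighted bounds already obtained together with $\|z\|_{H^{s+1}}\le Ce^{-\theta t}$; an induction on $s$ then gives $\sup_t I_{s,n}\le C_s$ uniformly in $n$. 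Finally, letting $n\to+\infty$ and combining the weighted bounds for $\partial_x^s u$ and $\partial_x^{s+1}u$ with a one-dimensional Sobolev inequality on unit windows inside $\{x\le\alpha t\}$ yields \eqref{prop_dec_exp_left}. The main obstacle is controlling the nonlinear error $\mathcal E_n$ simultaneously with the a priori finiteness issue: the weight grows exponentially to the left precisely where spatial decay is not yet known, so the estimate must be closed self-referentially (through $I_{s,n}$) and uniformly in $n$, which is where the smallness of $z$ from \eqref{CVexp_Hs}, the soliton decay, and the sign condition $\kappa_\alpha<\sqrt\alpha/2$ must all be combined.
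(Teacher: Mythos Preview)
Your overall architecture---a Kato-type monotonicity for weighted $L^2$ quantities, combined with the exponential convergence \eqref{CVexp_Hs} and soliton decay to send $t\to+\infty$, then Gronwall---is correct and is essentially the paper's strategy. The treatment of the nonlinear term via the splitting $|u|^{p+1}\lesssim |R|^{p+1}+\|z\|_{L^\infty}^{p-1}z^2$ is a legitimate alternative to the paper's domain splitting.

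There is, however, a genuine gap in the choice of weight. When you truncate the pure exponential to a constant on $(-\infty,-n]$ and smooth on an interval of fixed width, the ratio $|\varphi_n'''|/|\varphi_n'|$ is \emph{not} uniformly bounded on the transition region: if $\varphi_n'$ vanishes at the left endpoint (as it must, to reach the constant part) while $\varphi_n'''$ does not vanish at the same order, the ratio blows up. Concretely, the smoothing error in $\int u^2\varphi_n'''$ on the transition set $\{x-\alpha t\in[-n-1,-n]\}$ is of size $e^{2\kappa_\alpha n}\int_{\text{trans}} u^2$. Splitting $u=R+z$, the $R$ part is harmless (soliton decay beats $e^{2\kappa_\alpha n}$ since $\kappa_\alpha<\sqrt{c_1}$), but the $z$ part only gives $e^{2\kappa_\alpha n}\|z\|_{L^2}^2\lesssim e^{2\kappa_\alpha n}e^{-2\theta t}$, which is \emph{not} uniform in $n$; after integrating in $t$ the $n$-dependence survives and the Gronwall step does not close uniformly. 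Your phrase ``up to a controllable error localized near $y=-n$'' hides precisely this circularity: controlling that error requires the very weighted bound you are proving.

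The paper sidesteps this by using a bounded weight $\varphi(y)=\tfrac12-\tfrac1\pi\arctan(e^{\kappa y})$ (with $\kappa=2\kappa_\alpha$) satisfying $|\varphi'''|\le\kappa^2|\varphi'|$ \emph{globally}, so that the term $\int u^2\varphi'''$ is absorbed into the good term $-(\alpha-\kappa^2)\int u^2\varphi'$ without any transition error. Since this weight is bounded rather than exponentially growing, the paper introduces a translation parameter $x_0$ and a drift $\delta(t-t_0)$, proves $I_{(t_0,x_0)}(t_0)\le C e^{\kappa x_0}$ uniformly, and then recovers the exponential weight a posteriori via the pointwise comparison \eqref{ineq1} and letting $x_0\to-\infty$. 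Your scheme can in fact be repaired along the same lines: replace your truncated exponential by $\varphi_n(y):=e^{2\kappa_\alpha n}\varphi(y+n)$ with the paper's $\varphi$; this is bounded, satisfies $|\varphi_n'''|\le\kappa^2|\varphi_n'|$ everywhere, and converges monotonically to a multiple of $e^{-2\kappa_\alpha y}$ as $n\to\infty$, so your monotonicity and Gronwall arguments then go through uniformly in $n$.
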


\begin{Rq}
Using \eqref{CVexp_Hs}, one can easily see that the decay \eqref{prop_dec_exp_left} implies the one stated in \eqref{dec_exp_mid} in the region $x \le \alpha t$, with $\kappa = \kappa_\alpha$.
\end{Rq}

\begin{proof}
The proof follows the ideas of \cite{friederich} and \cite{laurent}. To reach the conclusion, we show the existence of $T_1\in\RR$ such that for each $s \in\NN$, there exists $K_s>0$ such that, with $\kappa:=2\kappa_\alpha$,
\begin{equation}\label{ineq_u_L2_k}
\forall t \ge T_1,\qquad \int_{x\le \alpha t}\left(\partial_x^s u(t,x)\right)^2e^{\kappa(\alpha t-x)}\;dx\le K_s.
\end{equation}

The first (and main) step is to obtain \eqref{ineq_u_L2_k} for $s=0$. For this, we claim a strong monotonicity property which is the purpose of Lemma \ref{lem_monot} and Lemma \ref{lem_limite} below. \\

\noindent Let us introduce, for some $\kappa>0$ to be determined later, the function $\varphi$ defined by
\[ \varphi(x)=\frac{1}{2}-\frac{1}{\pi}\arctan (e^{\kappa x}). \] 
It satisfies the following properties
\begin{align}\label{prop1_phi}
\exists \lambda_0>0,\:\forall x\in\RR,\qquad \lambda_0e^{-\kappa|x|} & <-\varphi '(x)<\frac{1}{\lambda_0}e^{-\kappa|x|}, \\
\label{prop2_phi}
\forall x\in\RR,\qquad |\varphi^{(3)}(x)| & \le -\kappa^2\varphi '(x). \\
\label{prop3_phi}
\exists \lambda_1>0,\:\forall x\ge 0,\qquad\lambda_1e^{-\kappa x} & \le \varphi(x).
\end{align}

\noindent Moreover, let us observe that 
\begin{equation}\label{egalite_integrale_tildeM}
\int_{x<\alpha t}u^2(t,x)e^{\kappa(\alpha t-x)}\;dx=\displaystyle\int_{x<0}u^2\left(t,x+\alpha t\right)e^{-\kappa x}\;dx,
\end{equation}
and that, for all $x_0<0$,
\begin{align} \nonumber
\int_{x_0\le x< 0}u^2\left(t,x+\alpha t\right)e^{-\kappa x}\;dx&\le e^{-\kappa x_0}\int_{x\ge x_0}u^2\left(t,x+\alpha t\right)e^{-\kappa (x-x_0)}\;dx\\
&\le \displaystyle\frac{1}{\lambda_1} e^{-\kappa x_0}\int_{\RR}u^2\left(t,x+\alpha t\right)\varphi(x-x_0)\;dx. \label{ineq1}
\end{align}

Since $\kappa^2<\alpha$, one can choose $\delta\in (0,\alpha-\kappa^2)$. We consider $T_1\in\RR$ to be determined later. Then, for fixed $t_0\ge T_1$ and $x_0\in\RR$, we define
\[ \begin{array}{crcl}
I_{(t_0,x_0)}:&[T_1,+\infty)&\rightarrow&\RR^+\\
&t&\mapsto&\displaystyle\int_\RR u^2(t,x+\alpha t)\varphi\big(x-x_0+\delta(t-t_0)\big)\;dx.
\end{array} \]

\noindent We have \begin{equation}
\forall t\ge T_1,\quad I_{(t_0,x_0)}(t)=\displaystyle\int_\RR u^2(t,x)\varphi\big(x-x_0+\delta(t-t_0)-\alpha t\big)\;dx,
\end{equation} 
so that by derivation with respect to $t$, we obtain 
\begin{equation}\label{derivee}
\begin{aligned}
\displaystyle\frac{dI_{(t_0,x_0)}}{dt}(t)=&\displaystyle-3\int_{\RR}u_x^2(t,x)\varphi '(\tilde{x})\;dx-(\alpha-\delta)\int_\RR u^2(t,x)\varphi '(\tilde{x})\;dx\\
&\displaystyle+\int_\RR u^2(t,x)\varphi^{(3)}(\tilde{x})\;dx+\frac{2p}{p+1}\int_\RR u^{p+1}(t,x)\varphi '(\tilde{x})\;dx,
\end{aligned}
\end{equation}
where $\tilde{x}:=x-x_0+\delta(t-t_0)-\alpha t$. We then claim

\begin{Lem}\label{lem_monot} 
There exists $C_0>0$ such that
\begin{equation}
\forall  x_0 \in \RR, \ \forall t_0,t \ge T_1, \quad \frac{dI_{(t_0,x_0)}}{dt}(t)\ge -C_0e^{-\kappa\left(-x_0+\delta(t-t_0)\right)}.
\end{equation}
\end{Lem}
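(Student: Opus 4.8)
The plan is to read off the sign of each of the four terms in the formula \eqref{derivee} for $dI_{(t_0,x_0)}/dt$ and to show that only the nonlinear term can be negative, everything else being nonnegative or absorbable. Since $\varphi'<0$ and $\alpha-\delta>0$, the first two terms $-3\int u_x^2\varphi'(\tilde x)\,dx$ and $-(\alpha-\delta)\int u^2\varphi'(\tilde x)\,dx$ are nonnegative. For the third term I would use \eqref{prop2_phi} in the form $\int u^2\varphi^{(3)}(\tilde x)\,dx\ge -\kappa^2\int u^2(-\varphi'(\tilde x))\,dx$ and combine it with the second term, which leaves the nonnegative quantity $(\alpha-\delta-\kappa^2)\int u^2(-\varphi'(\tilde x))\,dx$, where $\alpha-\delta-\kappa^2>0$ because $\delta\in(0,\alpha-\kappa^2)$. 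Thus it suffices to bound the nonlinear term $\frac{2p}{p+1}\int u^{p+1}\varphi'(\tilde x)\,dx$ from below by $-C_0 e^{-\kappa(-x_0+\delta(t-t_0))}$, up to a fraction of this good quadratic term.

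To estimate the nonlinear term I would fix an intermediate speed $\alpha'\in(\alpha,c_1)$ and split the integral over $\{x\ge\alpha' t\}$ and $\{x<\alpha' t\}$. On the first region the weight is exponentially small: since $\tilde x=x-x_0+\delta(t-t_0)-\alpha t\ge(\alpha'-\alpha)t-x_0+\delta(t-t_0)$ is large and positive, \eqref{prop1_phi} gives $-\varphi'(\tilde x)\le\lambda_0^{-1}e^{-\kappa\tilde x}$. Using the uniform bound $\|u(t)\|_{L^\infty}\le M$ (which follows from the uniform $H^1$ bound on $u$ and the Sobolev embedding) and integrating, one finds
\[ \int_{x\ge\alpha' t}|u|^{p+1}(-\varphi'(\tilde x))\,dx\le C\, e^{-\kappa(\alpha'-\alpha)t}\, e^{-\kappa(-x_0+\delta(t-t_0))}\le C\, e^{-\kappa(-x_0+\delta(t-t_0))}, \]
which is exactly of the desired form and supplies the term $-C_0 e^{-\kappa(-x_0+\delta(t-t_0))}$.

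On the second region I would instead exploit the smallness of $u$ itself. Writing $u=R+z$ as in \eqref{def:Ri_z}, the solitons $R_j$ are centered at $c_j t+x_j$ with $c_j\ge c_1>\alpha'$, so $R$ is exponentially small uniformly on $\{x<\alpha' t\}$; meanwhile $\|z(t)\|_{L^\infty}\le C\|z(t)\|_{H^1}\le C\lambda_1 e^{-\theta t}$ by \eqref{CVexp_Hs} and the Sobolev embedding. Hence $\sup_{x<\alpha' t}|u(t,x)|\to 0$ as $t\to+\infty$, and I would choose $T_1$ (independently of $x_0$ and $t_0$) so large that $\frac{2p}{p+1}\sup_{x<\alpha' t}|u(t,x)|^{p-1}\le\frac{1}{2}(\alpha-\delta-\kappa^2)$ for $t\ge T_1$. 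Then
\[ \frac{2p}{p+1}\left|\int_{x<\alpha' t}u^{p+1}\varphi'(\tilde x)\,dx\right|\le\frac{1}{2}(\alpha-\delta-\kappa^2)\int u^2(-\varphi'(\tilde x))\,dx, \]
which is absorbed into the good quadratic term. Combining the three contributions yields the claimed inequality.

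The main obstacle is precisely the control of the nonlinear term, and the crux is the choice of the intermediate speed $\alpha'$: to the right of $\alpha' t$ the smallness has nothing to do with the size of $u$ and comes entirely from the exponential decay of the weight $\varphi'(\tilde x)$, so that only a crude $L^\infty$ bound on $u$ is needed; whereas to the left of $\alpha' t$ the smallness comes from the decay of $u$ itself, both from the exponential localization of the solitons away from this region and from the vanishing of the remainder $z$ via \eqref{CVexp_Hs}. One should also verify that $T_1$ and $C_0$ can be taken uniform in $x_0$ and $t_0$, which holds because every smallness estimate above is measured in the time variable alone.
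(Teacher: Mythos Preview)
Your argument is correct and reaches the same conclusion as the paper, but the splitting of the nonlinear term is organised differently. The paper splits the integral $\int u^{p+1}\varphi'(\tilde x)\,dx$ according to $|\tilde x|\gtrless -x_0+\delta(t-t_0)$: on the far set one has directly $|\varphi'(\tilde x)|\le\lambda_0^{-1}e^{-\kappa(-x_0+\delta(t-t_0))}$ and bounds $\int|u|^{p+1}$ by Sobolev; on the near set one observes that $|\tilde x|\le -x_0+\delta(t-t_0)$ forces $x\le\alpha t$, and uses the smallness of $\|u(t)\|_{L^2(x\le\alpha t)}$ (combined with a Gagliardo--Nirenberg inequality) to absorb this piece into the good quadratic term. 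You instead introduce an auxiliary speed $\alpha'\in(\alpha,c_1)$ and split on $x\gtrless\alpha't$: on the right you integrate the weight after pulling out $\|u\|_{L^\infty}^{p+1}$, and on the left you use $L^\infty$ smallness of $u$ directly. Your route avoids the interpolation step and is slightly more elementary, at the cost of an extra parameter; the paper's route is more intrinsic to the variable $\tilde x$ and makes the connection with the region $\{x\le\alpha t\}$ transparent.

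One small remark: your claim that $\tilde x$ is ``large and positive'' on $\{x\ge\alpha't\}$ is not literally true for all $x_0,t_0,t$, but the inequality $-\varphi'(\tilde x)\le\lambda_0^{-1}e^{-\kappa\tilde x}$ that you use holds for every $\tilde x\in\RR$ (since $e^{-\kappa|\tilde x|}\le e^{-\kappa\tilde x}$), so the integration step and the final bound are valid as written.
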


\begin{proof}
Due to property (\ref{prop2_phi}) of $\varphi$, we have 
\begin{equation}\label{int0}
\left|\int_{\RR}u^2(t,x)\varphi^{(3)}(\tilde{x})\;dx\right|\le - \kappa^2\int_\RR u^2(t,x)\varphi '(\tilde{x})\;dx. 
\end{equation}

\noindent Furthermore we control the nonlinear part by considering 
\[ \displaystyle I_1(t):=\int_{|\tilde{x}|>-x_0+\delta(t-t_0)} u^{p+1}(t,x)\varphi '(\tilde{x})\;dx \] 
and 
\[ \displaystyle I_2(t):=\int_{|\tilde{x}|\le -x_0+\delta(t-t_0)} u^{p+1}(t,x)\varphi '(\tilde{x})\;dx. \]
On the one hand, we have due to (\ref{prop1_phi}) 
\begin{equation}\label{int_I1}
\displaystyle\big|I_1(t)\big| \le \frac{1}{\lambda_0}e^{-\kappa\big(-x_0+\delta(t-t_0)\big)}\int_\RR|u|^{p+1}(t,x)\;dx\le Ce^{-\kappa\big(-x_0+\delta(t-t_0)\big)},
\end{equation}
where we have used the Sobolev embedding $H^1(\RR)\hookrightarrow L^{p+1}(\RR)$ and the fact that $u$ belongs to $L^\infty([T_1,+\infty),H^1(\RR))$. Note that $C>0$ is independent of $x_0$, $t_0$, and $t$. \\
On the other, we observe that
\begin{equation}
\begin{aligned}
\displaystyle\big|I_2(t)\big|&\le \|u(t)\|_{L^\infty\big(x\le \alpha t\big)}^{p-1}\displaystyle\int_{x\le \alpha t}u^2(t,x)|\varphi '(\tilde{x})|\;dx\\
&\le \displaystyle\sqrt{2}^{p-1}\|u(t)\|_{L^2\big(x\le \alpha t\big)}^{\frac{p-1}{2}}\|u_x(t)\|_{L^2\big(x\le \alpha t\big)}^{\frac{p-1}{2}} \displaystyle\int_{\RR}u^2(t,x)|\varphi '(\tilde{x})|\;dx\\
&\le \displaystyle\sqrt{2}^{p-1}\|u(t)\|_{L^2\big(x\le \alpha t\big)}^{\frac{p-1}{2}}\sup_{t\ge T_1}\|u(t)\|_{H^1}^{\frac{p-1}{2}} \displaystyle\int_{\RR}u^2(t,x)|\varphi '(\tilde{x})|\;dx.
\end{aligned}
\end{equation}  

Since $u$ is a multi-soliton, we can choose $T_1\ge 0$ such that for all $t\ge T_1$, 
\begin{equation}\label{cond_T_1}
\sqrt{2}^{p-1}\|u(t)\|_{L^2\big(x\le \alpha t\big)}^{\frac{p-1}{2}}\sup_{t'\ge T_1}\|u(t')\|_{H^1}^{\frac{p-1}{2}}\le \frac{p+1}{2p}(\alpha-\delta-\kappa^2).
\end{equation}

Let us justify it briefly (here lies the main change with respect to previous proofs based on non dispersion \cite{friederich} or $L^2$-compactness \cite{laurent}): we have
\begin{align*}
\int_{x\le \alpha t}u^2(t,x)\;dx
&\le 2\int_{x\le \alpha t}\left(u-\sum_{j=1}^NR_j\right)^2(t,x)\;dx+2\int_{x\le \alpha t}\left(\sum_{j=1}^NR_j\right)^2(t,x)\;dx\\
&\le 2C_0^2e^{-2\theta t}+2N\sum_{j=1}^N\int_{x\le\alpha t}R_j^2(t,x)\;dx
\end{align*}

and for all $j=1,\dots,N$, since $\alpha<c_j$, we have for $t\ge 0$:
\begin{align*}
\MoveEqLeft \int_{x\le\alpha t}R_j^2(t,x)\;dx\le C\int_{x\le\alpha t}e^{-\sqrt{c_j}|x-c_jt-x_j|}e^{-\sqrt{c_j}|x-c_jt-x_j|}\;dx\\
&\le C\int_{x\le\alpha t}e^{-\sqrt{c_j}(c_j-\alpha)t}e^{-\sqrt{c_j}|x-c_jt-x_j|}\;dx\\
&\le Ce^{-\sqrt{c_j}(c_j-\alpha)t}\int_{\RR}e^{-\sqrt{c_j}|x-c_jt-x_j|}\;dx \le Ce^{-\sqrt{c_j}(c_j-\alpha)t}.
\end{align*}
where $C$ denotes a positive constant which can change from one line to the other and which only depends on $c_j$ (see expression \eqref{soliton1}). \\
Thus, we can pick up $C\ge 0$ such that for all $t\ge 0$,
\[ \int_{x\le \alpha t}u^2(t,x)\;dx\le C\left(e^{-2\theta t}+\sum_{j=1}^Ne^{-\sqrt{c_j}(c_j-\alpha)t}\right), \]
and then $T_1 \ge 0$ satisfying \eqref{cond_T_1}.

Taking into account (\ref{int_I1}), this eventually leads to the following estimate
\begin{align}\label{int}
\frac{2p}{p+1}\displaystyle\left|\int_\RR u^{p+1}(t,x)\varphi '(\tilde{x})\;dx\right| & \le -(\alpha-\delta-\kappa^2)\int_{\RR}u^2(t,x)\varphi '(\tilde{x})\;dx \nonumber \\
& \qquad +C_0e^{-\kappa\big(-x_0+\delta(t-t_0)\big)},
\end{align}
where $C_0:=\frac{2p}{p+1}C$ is independent of $x_0$, $t_0$, and $t$. Gathering \eqref{int0} and \eqref{int} in \eqref{derivee}, we finally deduce
\[ \frac{dI_{(t_0,x_0)}}{dt}(t)\ge -3\int_{\RR}u_x^2(t,x)\varphi '(\tilde{x})\;dx - C_0e^{-\kappa\big(-x_0+\delta(t-t_0)\big)}. \] 
 This establishes Lemma \ref{lem_monot}.
\end{proof}

As a consequence of the above lemma,
\begin{equation}\label{monot1}
\exists C_1>0,\:\forall x_0\in\RR,\:\forall t\ge t_0,\qquad I_{(t_0,x_0)}(t_0)\le I_{(t_0,x_0)}(t)+C_1e^{\kappa x_0},
\end{equation}
with $C_1$ independent of the parameters $x_0$ and $t_0$. Next, we claim the following:

\begin{Lem}\label{lem_limite}
For fixed $x_0\in\RR$ and $t_0\ge T_1$, $I_{(t_0,x_0)}(t)\rightarrow 0$ as $t\rightarrow +\infty$.
\end{Lem}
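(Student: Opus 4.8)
The plan is to exploit the \emph{spatial separation}, as $t\to+\infty$, between the mass of $u^2$ (concentrated near the soliton centers $c_jt$) and the effective support of the weight (the region where $\varphi$ is not yet negligible). Writing the integral in the physical variable,
\[ I_{(t_0,x_0)}(t)=\int_\RR u^2(t,x)\,\varphi(\tilde x)\;dx,\qquad \tilde x=x-(\alpha-\delta)t-x_0-\delta t_0, \]
the weight $\varphi(\tilde x)$ transitions from $1$ to $0$ near the line $x=(\alpha-\delta)t$. Since $0<\alpha-\delta<\alpha<c_1\le c_j$, this line travels strictly slower than every soliton; this separation is what forces $I_{(t_0,x_0)}(t)\to 0$.

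Concretely, I would split the integral at $x=\alpha t$. On the left region $x\le\alpha t$, I bound $\varphi\le 1$ and invoke the mass estimate already established in the proof of Lemma \ref{lem_monot}, namely
\[ \int_{x\le\alpha t}u^2(t,x)\;dx\le C\left(e^{-2\theta t}+\sum_{j=1}^Ne^{-\sqrt{c_j}(c_j-\alpha)t}\right), \]
which follows from the convergence \eqref{def_multisol} together with the exponential localization of the solitons (using $c_j>\alpha$); this quantity tends to $0$.

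On the right region $x\ge\alpha t$, the weight is the small factor. Integrating the bound $-\varphi'(s)<\lambda_0^{-1}e^{-\kappa s}$ from \eqref{prop1_phi} gives $\varphi(y)\le(\lambda_0\kappa)^{-1}e^{-\kappa y}$ for $y\ge 0$. For $x\ge\alpha t$ one has $\tilde x\ge\delta(t-t_0)-x_0\ge 0$ once $t$ is large enough (depending on $x_0,t_0$), whence
\[ \int_{x\ge\alpha t}u^2(t,x)\,\varphi(\tilde x)\;dx\le\frac{1}{\lambda_0\kappa}\,e^{-\kappa(\delta(t-t_0)-x_0)}\sup_{t'\ge T_1}\|u(t')\|_{L^2}^2, \]
and this tends to $0$, since $\delta>0$ and $\sup_{t'\ge T_1}\|u(t')\|_{L^2}<\infty$ because $u\in L^\infty([T_1,+\infty),H^1(\RR))$. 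Adding the two contributions yields $I_{(t_0,x_0)}(t)\to 0$.

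The only substantive ingredient is the mass estimate on the left, but that computation has already been carried out verbatim inside the proof of Lemma \ref{lem_monot}, so the remaining work is routine bookkeeping. The single point to watch is the choice of splitting abscissa: it must lie strictly between the speeds $\alpha-\delta$ and $c_1$ (here $\alpha$ works), so that \emph{simultaneously} all solitons sit to its right — giving smallness of the mass — and the weight is exponentially small to its right; any such choice closes both estimates.
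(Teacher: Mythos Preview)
Your proof is correct and follows essentially the same route as the paper: split the integral at $x=\alpha t$, bound $\varphi\le 1$ on the left and use the multi-soliton mass estimate there, and on the right use the decay of $\varphi$ together with the uniform $L^2$ bound on $u$. The only cosmetic difference is that the paper argues the right-hand piece via ``$\varphi$ decreasing and $\varphi(+\infty)=0$'' rather than your explicit bound $\varphi(y)\le(\lambda_0\kappa)^{-1}e^{-\kappa y}$, but the content is identical.
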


\begin{proof}
This lemma is shown by adapting the proof in \cite[paragraph 2.1, Step 2]{laurent} and in \cite{friederich}.
Let $\varepsilon$ be a positive real number.
As in the previous proof, because $u$ is a multi-soliton, we can find $T_1\ge T_0$ large such that for all $t\ge T_1$,
\[ \int_{x<\alpha t}u^2(t,x)\;dx\le \frac{\varepsilon}{2}.\] 
Since $0\le \varphi\le 1$, this enables us to see that 
\begin{align}
\displaystyle\int_{x<0}u^2\left(t,x+\alpha t\right)\varphi\big(x-x_0+\delta(t-t_0)\big)\;dx&\le \displaystyle\int_{x<\alpha t}u^2(t,x)\;dx \le \frac{\varepsilon}{2}.
\end{align}

\noindent Now, recall that $\varphi$ is decreasing so that 
\begin{align}
\MoveEqLeft[4]
\int_{x\ge 0}u^2(t,x+\alpha t)\varphi\big(x-x_0+\delta(t-t_0)\big)\;dx \nonumber\\
&\le \varphi\big(-x_0+\delta(t-t_0)\big)\|u(t)\|_{L^2}^2 \le \overline{C}\varphi\big(-x_0+\delta(t-t_0)\big),
\end{align} with $\overline{C}=\|u(t)\|_{L^2}^2$ for all $t\in J$. Moreover, since $\varphi(x)\to 0$ as $x\to +\infty$, there exists $T_2\in\RR$ such that for all $t\ge T_2$, 
\[ \overline{C}\varphi\big(-x_0+\delta(t-t_0)\big)\le \frac{\varepsilon}{2}. \]
Then, for all $t\ge \max\{T_1,T_2\}$,
\[ I_{(t_0,x_0)}(t)\le \frac{\varepsilon}{2}+\frac{\varepsilon}{2}=\varepsilon. \] 
Hence, we have finished proving Lemma \ref{lem_limite}.
\end{proof}

At this stage, we deduce from \eqref{monot1} and Lemma \ref{lem_limite} that
\begin{equation}\label{est_I_t_x}
\forall t_0\ge T_1,\:\forall x_0\in\RR,\quad I_{(t_0,x_0)}(t_0)\le C_1e^{\kappa x_0}.
\end{equation}
Thus, \eqref{ineq1} leads to
\[ \forall t\ge T_1,\quad \int_{x_0\le x<0}u^2\left(t,x+\alpha t\right)e^{-\kappa x}\;dx\le\frac{C_1}{\lambda_1}. \]
Letting $x_0\to -\infty$, we infer that 
\[ \forall t\ge T_1,\quad \int_{x<0}u^2\left(t,x+\alpha t\right)e^{-\kappa x}\;dx\le\frac{C_1}{\lambda_1}. \]
which proves \eqref{ineq_u_L2_k} with $s=0$.

\bigskip

Now, to conclude to \eqref{ineq_u_L2_k} for all $s \in\NN$,  one actually proves by induction on $s \in\NN$ the existence of $\tilde{K}_s\ge 0$ such that for all $t\ge T_1$, 
\begin{equation}\label{eq_rec_dec_u_k}
\int_{\RR}\left(\partial_x^s u\right)^2\left(t, x+\alpha t\right) e^{-\kappa x} \; dx + \int_{t}^{t+1}\int_{\RR}\left( \partial_x^s u \right)^2\left( \tau, x+\alpha \tau\right)e^{-\kappa x}\;dx\;d\tau\le\tilde{K}_s.
\end{equation}

For $s=0$, this is in fact a consequence of \eqref{ineq_u_L2_k} and of the following estimate: for all $t\ge t_0\ge T_1$,
\[ I_{(t_0,x_0)}(t_0)-I_{(t_0,x_0)}(t)\le \frac{C_1}{\lambda_1}e^{\kappa x_0}+3\int_{t_0}^{t}\int_\RR u_x^2(\tau,x+\alpha \tau)\varphi '(x-x_0+\delta(\tau-t_0))\;dx\;d\tau \]
 (which follows from the proof of Lemma \ref{lem_monot}).
Indeed, we notice that by \eqref{prop1_phi} and since $\varphi$ is decreasing, for $\tau\in [t_0,t]$,
\[ \lambda_0e^{-\kappa |x-x_0|}<-\varphi  '(x-x_0)\le -\varphi '(x-x_0+\delta(\tau-t_0)) \]
so that for $t=t_0+1$ in particular, we have
\begin{align*}
\MoveEqLeft \int_{t_0}^{t_0+1}\int_{x_0<x}u_x^2(\tau,x+\alpha \tau)e^{-\kappa x}\;dx\;d\tau \le Ce^{-\kappa x_0}\left(I_{(t_0,x_0)}(t)-I_{(t_0,x_0)}(t_0)\right)\\
&\le Ce^{-\kappa x_0}I_{(t_0,x_0)}(t_0+1) \le  Ce^{-\kappa x_0}I_{(t_0+1,x_0)}(t_0+1)\le C.
\end{align*}
where the last inequality results from \eqref{est_I_t_x}. Taking the limit when $x_0\to -\infty$, we obtain the desired inequality \eqref{eq_rec_dec_u_k}.

The rest of the induction argument closely follows \cite[paragraph 2.3 and paragraph 2.2 Step 2]{laurent}. Since it does not depend on the properties of the multi-soliton and for the sake of brevity, we will not detail the proof \eqref{eq_rec_dec_u_k} for higher values of $s$.
\end{proof}

\subsection{Decay of the (gKdV) multi-solitons in the solitons region}

\begin{Prop}[Exponential decay in the solitons region] \label{prop:soliton}
Let  $0<\alpha<c_1$ and $\beta>c_N$, and define
\begin{equation} \label{def:kappa_ab}
\kappa_{\alpha,\beta}:=\min\left\{\sqrt{c_1},\frac{\theta}{c_1-\alpha},\frac{\theta}{\beta-c_N},\min_{j=1,\dots,N-1}\left\{\frac{\theta}{c_{j+1}-c_j}\right\}\right\} >0.
\end{equation}
Then for all $s\in\NN$, there exists $C_s>0$ such that for all $t\ge T_0$,
\begin{equation}\label{prop_dec_exp_sol}
\forall x \in [\alpha t, \beta t], \quad \left|\partial^s_x u(t,x)\right|\le C_{s}\sum_{j=1}^Ne^{-\kappa_{\alpha,\beta}|x-c_jt|}.
\end{equation}
\end{Prop}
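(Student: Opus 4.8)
The plan is to use the decomposition $u = R + z$ from \eqref{def:Ri_z} and to treat the soliton part $R$ and the remainder $z$ separately. The whole point is to convert the \emph{temporal} decay of $z$ supplied by \eqref{CVexp_Hs} into \emph{spatial} decay valid throughout the solitons region, the balance being dictated precisely by the definition \eqref{def:kappa_ab} of $\kappa_{\alpha,\beta}$.

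First I would dispose of the remainder. Applying the one-dimensional Sobolev embedding $H^1(\RR)\hookrightarrow L^\infty(\RR)$ to $\partial_x^s z$, together with \eqref{CVexp_Hs} at order $s+1$, gives
\[ \|\partial_x^s z(t)\|_{L^\infty}\le C\|z(t)\|_{H^{s+1}}\le C\lambda_{s+1}e^{-\theta t}, \]
so that $|\partial_x^s z(t,x)|\le C_s e^{-\theta t}$ uniformly in $x$. For the soliton part, the explicit formula for $Q$ shows that $Q_{c_j}$ and all its derivatives decay like $e^{-\sqrt{c_j}|y|}$, whence $|\partial_x^s R_j(t,x)|\le C e^{-\sqrt{c_j}|x-c_jt-x_j|}$. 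Since $\kappa_{\alpha,\beta}\le\sqrt{c_1}\le\sqrt{c_j}$ and the shifts $x_j$ are bounded, one has $e^{-\sqrt{c_j}|x-c_jt-x_j|}\le e^{\kappa_{\alpha,\beta}|x_j|}\,e^{-\kappa_{\alpha,\beta}|x-c_jt|}$, so the contribution of $\partial_x^s R$ is already of the desired form $C_s\sum_{j}e^{-\kappa_{\alpha,\beta}|x-c_jt|}$.

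The crux is therefore to absorb the uniform bound $e^{-\theta t}$ for $z$ into the spatial sum on the region $x\in[\alpha t,\beta t]$. Since $\sum_{j}e^{-\kappa_{\alpha,\beta}|x-c_jt|}\ge e^{-\kappa_{\alpha,\beta}\min_j|x-c_jt|}$, it suffices to show that $\kappa_{\alpha,\beta}\min_j|x-c_jt|\le\theta t$ for all $x\in[\alpha t,\beta t]$, which then yields $e^{-\theta t}\le\sum_{j}e^{-\kappa_{\alpha,\beta}|x-c_jt|}$. To this end I would partition $[\alpha t,\beta t]$ into $[\alpha t,c_1 t]$, the intervals $[c_j t,c_{j+1}t]$ for $j=1,\dots,N-1$, and $[c_N t,\beta t]$. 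On the leftmost piece $\min_j|x-c_jt|=c_1 t-x\le(c_1-\alpha)t$; on the rightmost $\min_j|x-c_jt|=x-c_N t\le(\beta-c_N)t$; and on $[c_j t,c_{j+1}t]$ one has $\min_j|x-c_jt|\le(c_{j+1}-c_j)t$. Multiplying by the matching term $\frac{\theta}{c_1-\alpha}$, $\frac{\theta}{\beta-c_N}$, or $\frac{\theta}{c_{j+1}-c_j}$ of \eqref{def:kappa_ab} gives a bound $\le\theta t$ in each case, and since $\kappa_{\alpha,\beta}$ is no larger than any of these, the claim follows.

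Combining the two contributions gives \eqref{prop_dec_exp_sol}. The only genuinely delicate point is the matching carried out in the previous paragraph: one must verify that the distance from any $x$ in the solitons region to the nearest soliton center grows at most linearly in $t$, with a slope controlled by the gaps $c_1-\alpha$, $\beta-c_N$, and $c_{j+1}-c_j$ — which is exactly the information encoded in $\kappa_{\alpha,\beta}$. Everything else is routine: the shifts $x_j$ contribute only bounded corrections absorbed into the constants $C_s$, and the passage from $H^{s+1}$ to an $L^\infty$ bound on $\partial_x^s z$ is a direct use of Sobolev embedding.
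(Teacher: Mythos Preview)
Your proposal is correct and follows essentially the same route as the paper: decompose $u=R+z$, bound $\partial_x^s z$ in $L^\infty$ via Sobolev and \eqref{CVexp_Hs}, bound $\partial_x^s R_j$ by the exponential decay of $Q_{c_j}$, and then convert $e^{-\theta t}$ into $\sum_j e^{-\kappa_{\alpha,\beta}|x-c_jt|}$ by partitioning $[\alpha t,\beta t]$ exactly as you describe. The paper phrases the matching step as ``$e^{-\theta t}\le e^{-\kappa_{\alpha,\beta}(x-c_jt)}$ iff $x\le(c_j+\theta/\kappa_{\alpha,\beta})t$'' rather than via $\min_j|x-c_jt|$, but the content is identical.
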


\begin{proof}
Recall the notation $z$ given in \eqref{def:Ri_z}. For all $s\in\NN$, for all $t\ge T_0$, we have by \eqref{CVexp_Hs} and the Sobolev embedding $H^1(\RR)\hookrightarrow L^\infty(\RR)$,
\[ \|\partial_x^sz(t)\|_{L^\infty}\le C\|\partial_x^sz(t)\|_{H^1}\le C\|z(t)\|_{H^{s+1}}\le \lambda_{s+1}e^{-\theta t}. \]
Fix $t\ge T_0$. For all $j=1,\dots,N-1$ and $c_j t\le x\le c_{j+1}t$, 
\[ e^{-\theta t}\le e^{-\kappa_{\alpha,\beta}(x-c_j t)}\quad\text{if and only if}\quad x\le \left(c_j+\frac{\theta}{\kappa_{\alpha,\beta}}\right)t, \]
which is indeed satisfied since $c_{j+1}\le c_j+\frac{\theta}{\kappa_{\alpha,\beta}}$ by the choice of $\kappa_{\alpha,\beta}$ \eqref{def:kappa_ab}.

Similarly, for all $\alpha t\le x\le c_{1}t$, we have $e^{-\theta t}\le e^{-\kappa_{\alpha,\beta}(c_1 t-x)}$ because $c_{1}\le \alpha+\frac{\theta}{\kappa_{\alpha,\beta}}$. 
And for all $c_N t\le x\le \beta t$, we have $e^{-\theta t}\le e^{-\kappa_{\alpha,\beta}(x-c_Nt)}$ because $\beta\le c_N+\frac{\theta}{\kappa_{\alpha,\beta}}$. 

Thus, we obtain that for all $t\ge T_0$ and for all $\alpha t\le x\le \beta t$, 
\[ \left|\partial_x^sz(t,x)\right|\le \|\partial_x^sz(t)\|_{L^\infty}\le \lambda_{s+1}e^{-\theta t}\le \lambda_{s+1}\sum_{j=1}^Ne^{-\kappa_{\alpha,\beta}|x-c_j t|}. \]
Moreover, for all $j=1,\dots,N$, $\left|\partial_x^sR_j(t,x)\right|\le C_{j,s}e^{-\sqrt{c_j}|x-c_jt|}$ for some $C_{j,s}>0$ depending on $j$ and $s$. Hence, we conclude to \eqref{prop_dec_exp_sol} by the triangular inequality and the fact that $\kappa_{\alpha,\beta}\le\sqrt{c_1}$.
\end{proof}

\section{Decay of the (gKdV) multi-solitons on the right of the last soliton}\label{sec_decay_gkdv_right}

In this subsection, we analyze the behavior of the multi-solitons on the right of the solitons region. We will focus on the proof of the following

\begin{Prop}[Polynomial decay in large time on the right of the last soliton]\label{prop_decroissance}
Let $\beta >c_N$. For all $s\in\NN$ and for all $n\in\NN$, there exists $C_{s,n}>0$ such that for all $t\ge T_0$, for all $x>\beta t$,
\begin{equation}
\left(\partial^s_x u(t,x)\right)^2\le \frac{C_{s,n}}{(x-\beta t)^n}.
\end{equation}
\end{Prop}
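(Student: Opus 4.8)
The plan is to prove the estimate for the remainder $z = u - R$ of \eqref{def:Ri_z} and then add back $R$. Indeed, for $x > \beta t$ one has $|x - c_j t| \ge (\beta - c_j)t$, so the exponential localization of each soliton already gives $\partial_x^s R$ a bound of the form \eqref{dec_right} on $\{x > \beta t\}$; thus it is enough to control $z$ there. Following the strategy outlined in the introduction, I introduce the weighted quantities $I_{\varphi,s,x_0}(t) = \int_{x \ge \beta t}(\partial_x^s z)^2(t,x)\,\varphi(x - x_0 - \beta t)\,dx$ and aim to show that, for every $s,n \in \NN$, the moment $I_{y^n,s,0}$ is bounded uniformly in $t \ge T_1$. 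Once this is established, the pointwise bound follows from the identity $(\partial_x^s z)^2(t,x^*) = -2\int_{x^*}^{+\infty}(\partial_x^s z)(\partial_x^{s+1}z)\,dx$ together with Cauchy--Schwarz and the truncation $\int_{x^*}^{+\infty}(\partial_x^s z)^2 \le (x^* - \beta t)^{-n} I_{y^n,s,0}$, which gives $(\partial_x^s z)^2(t,x^*) \le C_{s,n}(x^* - \beta t)^{-n}$ for arbitrary $n$.

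The core computation is the time-derivative identity for $I_{\varphi,s,x_0}$. Writing the equation satisfied by $z$ as $z_t = -z_{xxx} - \partial_x\big(u^p - \sum_j R_j^p\big)$, differentiating $I_{\varphi,s,x_0}$ in $t$ and integrating by parts in $x$ --- exactly as in \eqref{derivee} --- produces the leading terms $-3\int(\partial_x^{s+1}z)^2\varphi' + \int(\partial_x^s z)^2\varphi'''$, a transport term $-\beta\int(\partial_x^s z)^2\varphi'$ coming from the moving frame, a nonlinear contribution from $\partial_x^{s+1}\big(u^p - \sum_j R_j^p\big)$, and boundary terms at $x = \beta t$. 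Choosing $\varphi$ increasing makes $-3\int(\partial_x^{s+1}z)^2\varphi'$ and $-\beta\int(\partial_x^s z)^2\varphi'$ nonpositive --- the first of these is the local smoothing gain --- while for $\varphi(y) = y^n$ the term $\int(\partial_x^s z)^2\varphi''' = n(n-1)(n-2)\,I_{y^{n-3},s,x_0}$ has strictly lower weight degree. The boundary terms involve $\partial_x^k z(t,\beta t)$ with $k \le s+2$, which are $O(e^{-\theta t})$ by Proposition \ref{prop:soliton} and Sobolev embedding; the same convergence \eqref{CVexp_Hs}, a Leibniz expansion, and the exponential smallness of $R$ on $\{x \ge \beta t\}$ show that the nonlinear and soliton-interaction terms are exponentially small in $t$ as well. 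In summary, $\frac{d}{dt}I_{\varphi,s,x_0}$ is controlled by the $I_{\varphi',s',x_0}$ at strictly lower weight degree (and at derivative order $s$ or $s+1$, with a favorable sign) plus remainders that decay exponentially in $t$.

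I would then close the estimate by a triangular induction on the pair (weight degree $n$, derivative order $s$). The base case $n = 0$ is \eqref{CVexp_Hs}, since $I_{1,s,x_0}(t) \le \|z(t)\|_{H^s}^2 \le \lambda_s^2 e^{-2\theta t}$ uniformly in $x_0$. For the inductive step I first work with a bounded shifted weight: integrating the above differential inequality in $t$, using the induction hypothesis that the lower-degree quantities decay exponentially and using the smoothing term $-3\int(\partial_x^{s+1}z)^2\varphi'$ over unit time windows to absorb the coupling to derivative order $s+1$, yields exponential-in-$t$ decay of $I_{\varphi,s,x_0}(t)$ with an explicit dependence on $x_0$. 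Integrating these bounds against $x_0$ over $[0,+\infty)$ converts the family of shifted bounded weights into the polynomial weight $y^n$, which furnishes the uniform bound on $I_{y^n,s,0}$ and closes the induction.

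The main obstacle is precisely closing this coupled induction. The dispersive term entangles the estimates at consecutive derivative orders, so one must use the sign of $\int(\partial_x^{s+1}z)^2\varphi'$ as a time-integrated smoothing gain rather than as an error, and order the induction in $(n,s)$ accordingly. A second difficulty is that polynomially weighted integrals of an $H^\infty$ function need not be finite a priori; this is exactly why the argument proceeds through bounded shifted weights and only manufactures the polynomial moments at the end by integrating in $x_0$. The remaining, more routine, points are the uniform control of $\partial_x^{s+1}(u^p)$ on $\{x \ge \beta t\}$ via the smallness of $z$ in every $H^\sigma$ and the exponential smallness of $R$, and the verification that every boundary term at $x = \beta t$ is genuinely $O(e^{-\theta t})$ through Proposition \ref{prop:soliton}.
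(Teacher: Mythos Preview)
Your overall strategy matches the paper's: work with $z$, study weighted integrals $J_{s,x_0}(t)=\int(\partial_x^sz)^2\varphi(x-x_0-\beta t)\,dx$, derive a differential inequality, integrate in $t$ and then in $x_0$, and close by a triangular induction that passes from a weight $\varphi$ to its antiderivative $\varphi_{[1]}$. Two points need correction, one cosmetic and one substantive.

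First, the paper integrates over all of $\RR$ with a smooth weight $\varphi$ satisfying $\varphi(x)\to 0$ and $0\le\varphi'(x)\le\kappa_1 e^{\sqrt\eta x}$ as $x\to-\infty$ (condition $(\mathbf A)$). This eliminates the boundary terms at $x=\beta t$ and makes the soliton contributions harmless through Claim~\ref{claim_interaction}; more importantly, the exponential tail of $\varphi$ at $-\infty$ is precisely what produces the factor $e^{-\sqrt\eta x_0}$ in the error of the differential inequality (Lemma~\ref{lem_der_I}), and that decay in $x_0$ is what makes the later $x_0$-integration converge. Your half-line integration could in principle be made to work, but it obscures this structure.

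Second --- and this is the real gap --- your plan to ``use the smoothing term $-3\int(\partial_x^{s+1}z)^2\varphi'$ over unit time windows to absorb the coupling to derivative order $s+1$'' does not work on the right of the solitons. Here one integrates the differential inequality from $t$ to $+\infty$ (using $J_{s,x_0}(+\infty)=0$), and in that direction the dispersive term enters with the \emph{unfavourable} sign: one obtains
\[
J_{s,x_0}(t)\le K_s\int_t^{+\infty}\!\!\int\sum_{k=0}^{s+1}(\partial_x^kz)^2\,\varphi'(\tilde x)\,dx\,dt'+K_se^{-\sqrt\eta x_0}e^{-\theta t}
\]
(Lemma~\ref{lem_est_I}), and the unit-window trick of \S\ref{subsection_left} cannot convert the right-hand side into something already controlled at level $s$. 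The paper resolves this differently: integrating the inequality above in $x_0\in[0,\infty)$ turns $\varphi'$ into $\varphi$ on the right-hand side, so the bound becomes $K_s\int_t^{+\infty}\!\int\sum_{k\le s+1}(\partial_x^kz)^2\,\varphi\,dx\,dt'$, which is finite precisely by the hypothesis $(\mathbf B(\overline s+1))$ for $\varphi$. Thus the induction is $P(\overline s+1,\varphi)\Rightarrow P(\overline s,\varphi_{[1]})$: every antiderivative of the weight costs one derivative of $z$. Starting from a bounded $\varphi$ (where $(\mathbf B(s))$ holds for all $s$ by \eqref{CVexp_Hs}) yields $(\mathbf B(s))$ for $\varphi_{[n]}\sim x^n/n!$ and every $s$, and the pointwise bound follows. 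Your ``triangular induction on $(n,s)$'' is the right headline, but the mechanism that closes it is the $x_0$-integration converting $\varphi'\to\varphi$, not local smoothing.
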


Notice that this statement will appear as a corollary of a more general result, which has its own interest (see Proposition \ref{prop_stability} below) and which relies on a triangular induction process.

By Theorem \ref{th_multi_known}, there exists $\theta>0$ such that for all $s\in\NN$, there exists $\lambda_s>0$ such that for all $t\ge T_0$,
\begin{equation}\label{est_Hs}
\|z(t)\|_{H^s}\le \lambda_se^{-\theta t}.
\end{equation}

One can estimate the growth of $\lambda_s$ with respect to $s$, and in fact, we will keep for the sequel
\begin{equation}\label{est_lambda_s}
\lambda_s\le C.2^{\mu_0^s},
\end{equation}
with $\mu_0>\max\left\{\sqrt{p},\frac{p+1}{2}\right\}$ defined in Claim \ref{claim_interaction}. 
The proof of estimate \eqref{est_lambda_s} is postponed to  paragraph \ref{app:a3} in the Appendix.

\subsection{The key ingredient: stability by integration of a well-chosen set of weight functions}

Fix
\[ \eta\in (0,c_1). \] The choice of $\eta$ is made in order to obtain the interaction estimate of Claim \ref{claim_interaction} below, which roughly expresses that the growth of $x\mapsto e^{\sqrt{\eta}x}$ is weaker than the decay of the solitons. 

\begin{Claim}\label{claim_interaction}
For all $s \in\NN$, there exists $C>0$ and $\mu>\sqrt{p}$ such that for all $j=1,\dots,N$ and for all $t\ge T_0$,
\begin{equation}\label{ineg_interaction}
\int_{\RR}\left|\partial_{x}^s R_j(t,x)\right|e^{\sqrt{\eta}(x-\beta t)}\;dx \le C 2^{\mu^s}.
\end{equation}
\end{Claim}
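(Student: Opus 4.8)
The plan is to reduce the claim, via scaling, to a single bound on the weighted derivatives of the profile $Q$, and then to run an induction on $s$ built on the stationary equation $Q''=Q-Q^p$; the exponent $\mu>\sqrt p$ will be dictated precisely by the $p$-fold product coming from the nonlinearity. First I would remove the time dependence. Writing $R_j(t,x)=Q_{c_j}(x-c_jt-x_j)$ and substituting $y=x-c_jt-x_j$, the integral becomes
\[ e^{\sqrt{\eta}\,(x_j+(c_j-\beta)t)}\int_{\RR}\bigl|(Q_{c_j})^{(s)}(y)\bigr|\,e^{\sqrt{\eta}\,y}\,dy . \]
Since $c_j\le c_N<\beta$ and $t\ge T_0\ge0$, the prefactor is bounded by a constant uniformly in $t$. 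Then, using $Q_{c_j}(y)=c_j^{1/(p-1)}Q(\sqrt{c_j}\,y)$ and rescaling $z=\sqrt{c_j}\,y$, each differentiation and the change of variables produce only a factor $c_j^{(s-1)/2}=2^{O(s)}$, leaving
\[ \int_{\RR}\bigl|\partial_x^sR_j(t,x)\bigr|\,e^{\sqrt{\eta}(x-\beta t)}\,dx\le C\,2^{O(s)}\int_{\RR}\bigl|Q^{(s)}(z)\bigr|\,e^{a_0|z|}\,dz,\qquad a_0:=\sqrt{\eta/c_1}. \]
It is exactly here that the hypothesis $\eta<c_1$ is used, to guarantee $a_0<1$ (and $e^{\sqrt{\eta/c_j}\,z}\le e^{a_0|z|}$).

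Next I would pass to the decay-normalised sup norm $B_s:=\sup_{z}|Q^{(s)}(z)|\,e^{|z|}$. Since $Q$ and all its derivatives decay at the linear rate $1$ (the equation linearised at $0$ being $Q''=Q$; equivalently from the explicit $\mathrm{sech}$ form), each $B_s$ is finite, and because $a_0<1$ we get $\int_{\RR}|Q^{(s)}|e^{a_0|z|}\,dz\le\frac{2}{1-a_0}B_s$; so it suffices to control $B_s$. Differentiating $Q''=Q-Q^p$ gives $Q^{(s+2)}=Q^{(s)}-(Q^p)^{(s)}$, and after Leibniz expansion the pointwise bound $\prod_k|Q^{(i_k)}(z)|e^{|z|}\le\bigl(\prod_kB_{i_k}\bigr)e^{-(p-1)|z|}\le\prod_kB_{i_k}$ (valid as $p\ge2$) yields the recursion
\[ B_{s+2}\le B_s+\sum_{i_1+\dots+i_p=s}\binom{s}{i_1,\dots,i_p}\prod_{k=1}^pB_{i_k}. \]

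I would then fix any $\mu>\sqrt p$ and prove $B_s\le C2^{\mu^s}$ by strong (even/odd) induction. Bounding crudely $B_{i_k}\le C2^{\mu^{i_k}}\le C2^{\mu^s}$ (as $i_k\le s$) and using $\sum_{i_1+\dots+i_p=s}\binom{s}{i_1,\dots,i_p}=p^s$ gives $B_{s+2}\le C2^{\mu^s}+C^p\,p^s\,2^{p\mu^s}$, to be compared with the target $C2^{\mu^{s+2}}=C2^{\mu^2\mu^s}$. Combining this with the two reduction steps above, and absorbing the benign factors $2^{O(s)}$ into a marginally larger exponent (still $>\sqrt p$), would give the claim.

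The main obstacle, and the heart of the matter, is closing this last induction: the $p$-fold product turns the ansatz $2^{\mu^s}$ into $2^{p\mu^s}$, so only when $\mu^2>p$ does the target $2^{\mu^2\mu^s}$ dominate, the ratio $C^{p-1}p^s2^{(p-\mu^2)\mu^s}$ then tending to $0$ doubly exponentially and thereby absorbing the combinatorial factor $p^s$. This is precisely what forces $\mu>\sqrt p$. The step therefore holds for all large $s$, and the finitely many small $s$ are absorbed into $C$; the mild circularity between this threshold and the choice of $C$ is harmless, since the right-hand side grows doubly exponentially in $s$ while $C$ enters only through $\log C$. (A cleaner but heavier alternative would bypass the recursion entirely by invoking analyticity of $Q$ in a strip and Cauchy estimates, giving the stronger factorial bound $B_s\lesssim s!\,\rho^{-s}$, which is $\ll 2^{\mu^s}$; the self-contained recursion seems preferable here.)
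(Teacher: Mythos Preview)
Your argument is correct. The reduction to the profile $Q$ via translation and scaling is clean, the choice $a_0=\sqrt{\eta/c_1}<1$ is exactly where the hypothesis $\eta<c_1$ enters, and the recursion on $B_s=\sup_z|Q^{(s)}(z)|e^{|z|}$ built from $Q''=Q-Q^p$ closes for any $\mu>\sqrt p$, for the reason you state.

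The paper organises the proof differently. It stays in the original $(t,x)$ variables and splits the integral at $x=\beta t$: on $\{x\le\beta t\}$ the weight is $\le1$ and one simply integrates $e^{-\sqrt{c_j}|x-c_jt|}$; on $\{x>\beta t\}$ one uses $\beta>c_N$ and $\eta<c_1\le c_j$ to get $\sqrt{c_j}-\sqrt{\eta}>0$. The pointwise bound $|\partial_x^sR_j|\le C_{j,s}e^{-\sqrt{c_j}|x-c_jt|}$ is invoked, and the growth $C_{j,s}\le C\,2^{\mu^s}$ is not argued in the proof of the claim itself but is deferred to Proposition~\ref{prop_est_Hs_solitons} in the appendix, which bounds $\|Q\|_{H^s}$ via the same ODE recursion you use, only in $L^2$ rather than in your weighted $L^\infty$ norm $B_s$. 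So the two routes share the same engine (Leibniz expansion of $(Q^p)^{(s)}$ and the gain $\mu^2>p$); yours is more self-contained for this specific claim and avoids the detour through Sobolev norms, while the paper's separation keeps the $H^s$ growth estimate available as a standalone result reused elsewhere. The ``mild circularity'' you flag is real but harmless: the paper's appendix sidesteps it by iterating the recursion $\|Q\|_{H^{s+2}}\le Cp^s\|Q\|_{H^s}^p$ to a closed-form bound rather than running a formal induction with a constant to be chosen; you could do the same with $B_s$.
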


We will consider weight functions $\varphi\in\mathscr{C}^3(\RR,\RR)$ which satisfy the following assumptions:

\begin{align}
&(i) \quad\displaystyle\lim_{x\to -\infty}\varphi(x)=0 \tag{$\mathbf{A}$}\\
&(ii) \quad\exists \kappa_1>0,\:\forall x\in\RR,\quad 0\le \varphi '(x)\le \kappa_1e^{\sqrt{\eta} x}\notag\\
&(iii)\quad\exists \kappa_2>0,\:\forall x\in\RR,\quad |\varphi ^{(3)}(x)|\le \kappa_2 \varphi '(x)\notag
\end{align}

and for given $\overline{s}\in\NN$,

\begin{align}
&\exists C({\overline{s}},\varphi)>0,\:\forall s \in\{0,\dots,\overline{s}\},\forall t\ge T_0,\quad \int_{\RR}\left(\partial_x^s z\right)^2(t,x)\varphi(x-\beta t)\;dx\le C(\overline{s},\varphi)e^{-\theta t}.\tag{$\mathbf{B}(\overline{s})$}
\end{align}

Let us define the following property which depends on $\overline{s}$ and $\varphi$:
\[ P(\overline{s},\varphi):\quad \varphi \text{ satisfies } (\mathbf{A}) \text{ and } (\mathbf{B}(\overline{s})). \]

Note that under assumptions ($\mathbf{A}$) ($i$) and ($ii$), $\varphi$ is integrable in the neighborhood of $-\infty$ (see also \eqref{ineg_varphi} below); we can thus define on $\RR$ the antiderivative $\varphi_{[1]}$ of $\varphi$:
\[ \varphi_{[1]}:x\mapsto \int_{-\infty}^x\varphi(r)\;dr. \]

Next, we state the key ingredient, reflecting the triangular way of obtaining Proposition \ref{prop_decroissance},

\begin{Prop}\label{procede_tri}
For $\overline{s}\in\NN$, $P(\overline{s}+1,\varphi)\Rightarrow P(\overline{s},\varphi_{[1]})$ and moreover, given $\mu_0>\max\left\{\sqrt{p},\frac{p+1}{2}\right\}$, there exists a constant $c(\eta,\kappa_1,\kappa_2)>0$ (which depends only on $\eta,\kappa_1$ and $\kappa_2$) such that
\begin{equation}\label{ineg_procede_tri}
C(\overline{s},\varphi_{[1]})\le c(\eta,\kappa_1,\kappa_2)2^{\mu_0^{\overline{s}}}C(\overline{s}+1,\varphi).
\end{equation}
\end{Prop}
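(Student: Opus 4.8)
<br>

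The goal is to show that if $\varphi$ satisfies $P(\overline{s}+1,\varphi)$, then its antiderivative $\varphi_{[1]}$ satisfies $P(\overline{s},\varphi_{[1]})$, together with the quantitative bound \eqref{ineg_procede_tri}. Let me think through how this should go.

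First, I would verify that $\varphi_{[1]}$ satisfies assumption $(\mathbf{A})$. Since $\varphi$ tends to $0$ at $-\infty$ and is integrable there (by $(\mathbf{A})(i)$–$(ii)$, as noted in the text), $\varphi_{[1]}(x)=\int_{-\infty}^x \varphi \to 0$ as $x\to-\infty$ by definition. So $(\mathbf{A})(i)$ for $\varphi_{[1]}$ is immediate. For the structural conditions, note $\varphi_{[1]}' = \varphi \ge 0$ and, using $(\mathbf{A})(ii)$ for $\varphi$ (namely $\varphi'(x)\le \kappa_1 e^{\sqrt\eta x}$) together with $\varphi(-\infty)=0$, one integrates to get $\varphi(x)=\varphi_{[1]}'(x) \le \frac{\kappa_1}{\sqrt\eta}e^{\sqrt\eta x}$, giving $(\mathbf{A})(ii)$ for $\varphi_{[1]}$ with constant $\kappa_1/\sqrt\eta$; and $\varphi_{[1]}^{(3)}=\varphi'' $, so $(\mathbf{A})(iii)$ for $\varphi_{[1]}$ reduces to controlling $|\varphi''|$ by $\varphi$, which one gets by integrating $(\mathbf{A})(iii)$ for $\varphi$. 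This is routine bookkeeping.

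The heart of the matter is establishing $(\mathbf{B}(\overline s))$ for $\varphi_{[1]}$: for each $s\in\{0,\dots,\overline s\}$ I must bound $\int (\partial_x^s z)^2(t,x)\,\varphi_{[1]}(x-\beta t)\,dx \lesssim e^{-\theta t}$. The plan is to differentiate the quantity $I(t):=\int (\partial_x^s z)^2(t)\,\varphi_{[1]}(\cdot-\beta t)\,dx$ in time, using that $z=u-R$ solves the equation obtained by subtracting the soliton ansatz from \eqref{gKdV}. The $-\beta t$ shift produces a favorable transport term $+\beta\int(\partial_x^s z)^2\varphi_{[1]}'$ with the correct sign (since $\beta>c_N$ and $\varphi_{[1]}'=\varphi\ge0$), which is the mechanism forcing decay to the right; the dispersive part, after integration by parts, produces $\int(\partial_x^{s+1}z)^2\varphi_{[1]}'=\int(\partial_x^{s+1}z)^2\varphi$ and lower-order terms controlled by $(\mathbf A)(iii)$. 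Crucially, when I integrate by parts I pick up $\varphi_{[1]}'=\varphi$, so the weight \emph{drops by one derivative}: the variation of $I$ involving $\varphi_{[1]}$ and derivatives up to order $s$ is controlled by integrals against $\varphi$ of derivatives up to order $s+1$ — and these are exactly bounded by hypothesis $(\mathbf B(\overline s+1))$ for $\varphi$. This is the triangular structure the proposition advertises. The nonlinear terms (from $\partial_x(u^p)$ after subtracting $\partial_x(\sum R_j^p)$) split into purely $z$-dependent pieces, handled by the $H^{s+1}$-smallness of $z$ from \eqref{est_Hs} and Sobolev embedding, and interaction terms mixing $R_j$ with $z$; the latter are where Claim \ref{claim_interaction} enters, bounding $\int |\partial_x^s R_j|\,e^{\sqrt\eta(x-\beta t)}\,dx$ and hence (via $(\mathbf A)(ii)$) the weighted interaction integrals, producing the factor $2^{\mu^s}$. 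One assembles a differential inequality of the form $I'(t)\ge -C e^{-\theta t}$ (modulo the good sign terms), and integrating from $t$ to $+\infty$ — where $I(t)\to0$ by \eqref{CVexp_Hs} and $\varphi_{[1]}$ being bounded — yields $I(t)\le Ce^{-\theta t}$, i.e. $(\mathbf B(\overline s))$ for $\varphi_{[1]}$.

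The main obstacle, and where the constant-tracking in \eqref{ineg_procede_tri} is decided, is the careful accounting of how the constants propagate through this estimate: the factor $C(\overline s+1,\varphi)$ enters through the hypothesis $(\mathbf B(\overline s+1))$ controlling the order-$(s{+}1)$ weighted integrals, while the combinatorial and interaction factors (from differentiating the nonlinearity $u^p$, yielding up to $p$-fold products handled by $L^\infty$ bounds and Claim \ref{claim_interaction}) produce the $2^{\mu_0^{\overline s}}$ growth, with $\mu_0>\max\{\sqrt p,\frac{p+1}{2}\}$ chosen precisely so that both the $\sqrt p$-type interaction exponent from Claim \ref{claim_interaction} and the $\frac{p+1}{2}$-type exponent from the $(p{+}1)$-linear nonlinear estimates are absorbed. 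Keeping these two sources of growth separate, showing they combine multiplicatively rather than compounding into a worse rate, and verifying the remaining dependence collapses into a constant $c(\eta,\kappa_1,\kappa_2)$ depending only on the allowed parameters, is the delicate part. The sign of the transport term and the fact that the weight loses exactly one derivative under integration by parts are what make the whole scheme close.
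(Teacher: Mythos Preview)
Your outline captures the correct general mechanism — differentiating a weighted integral, using $(\mathbf{A})(iii)$ for the linear dispersive part, handling the nonlinear and interaction terms via \eqref{est_Hs} and Claim \ref{claim_interaction}, and integrating from $t$ to $+\infty$ — but there is a genuine gap at the step where you integrate in time.

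You assert that $I(t)=\int (\partial_x^s z)^2\,\varphi_{[1]}(x-\beta t)\,dx \to 0$ as $t\to+\infty$ ``by \eqref{CVexp_Hs} and $\varphi_{[1]}$ being bounded''. But $\varphi_{[1]}$ is \emph{not} bounded: already for the base weight $\varphi(x)=\tfrac{2}{\pi}\arctan(e^{\sqrt\eta x})$ one has $\varphi_{[1]}(x)\sim x$ as $x\to+\infty$ (and more generally $\varphi_{[n]}(x)\sim x^n/n!$, cf.\ Claim \ref{claim_poly}). So you cannot conclude $I(t)\to 0$ from $\|z(t)\|_{H^s}\to 0$. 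Worse, you do not even know a priori that $I(t)$ is \emph{finite}: finiteness of $\int (\partial_x^s z)^2\,\varphi_{[1]}(x-\beta t)\,dx$ is precisely the content of $(\mathbf{B}(\overline s))$ for $\varphi_{[1]}$, which is what you are trying to prove. Thus the differential inequality you write for $I$ is formal, and the integration from $t$ to $+\infty$ is unjustified.

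The paper avoids this by a device you are missing. Instead of working directly with the $\varphi_{[1]}$--weighted integral, one introduces an extra shift parameter $x_0\ge 0$ and studies
\[
J_{s,x_0}(t):=\int_{\RR}(\partial_x^s z)^2(t,x)\,\varphi(x-x_0-\beta t)\,dx,
\]
with the \emph{original} weight $\varphi$. By hypothesis $(\mathbf{B}(\overline s+1))$ for $\varphi$ (and monotonicity of $\varphi$), each $J_{s,x_0}(t)$ is finite and tends to $0$ as $t\to+\infty$, so the time integration is legitimate. One proves the differential inequality and time-integrated bound for $J_{s,x_0}$ exactly along the lines you describe (Lemmas \ref{lem_der_I}--\ref{lem_est_I}), uniformly in $x_0$, with an error term $e^{-\sqrt\eta x_0}e^{-\theta t}$. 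Only at the very end does one integrate over $x_0\in[0,+\infty)$ and use
\[
\int_0^{+\infty}\varphi(x-x_0-\beta t)\,dx_0=\int_{-\infty}^{x-\beta t}\varphi(y)\,dy=\varphi_{[1]}(x-\beta t)
\]
(and similarly $\int_0^{+\infty}\varphi'(x-x_0-\beta t')\,dx_0=\varphi(x-\beta t')$) together with Fubini to recover the $\varphi_{[1]}$--weighted bound. This $x_0$--integration is the missing idea that turns your heuristic into a rigorous proof.
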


As a corollary of the previous proposition, defining the set
\[ \mathcal{E}:=\left\{\varphi\in\mathscr{C}^3(\RR,\RR)\:|\: \varphi \text{ satisfies } (\mathbf{A}) \text{ and } (\mathbf{B}(s)) \text{ for all } s\in\NN\right\}, \]
we immediately obtain step by step, in a triangular way, the following

\begin{Prop}[Stability by integration]\label{prop_stability}
If $\varphi\in\mathcal{E}$, then $\varphi_{[1]}\in\mathcal{E}$.
\end{Prop}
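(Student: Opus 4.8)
The plan is to obtain Proposition \ref{prop_stability} as an immediate consequence of the already-established triangular mechanism in Proposition \ref{procede_tri}, by iterating it. The essential observation is that membership in $\mathcal{E}$ is precisely the conjunction of the property $P(s,\varphi)$ for \emph{all} $s \in \NN$: indeed, $\varphi \in \mathcal{E}$ means $\varphi$ satisfies $(\mathbf{A})$ together with $(\mathbf{B}(s))$ for every $s$, which is exactly $P(s,\varphi)$ holding for every $s$.

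So suppose $\varphi \in \mathcal{E}$. I would first check that $\varphi_{[1]}$ is well-defined and lies in $\mathscr{C}^3(\RR,\RR)$: since $\varphi \in \mathcal{E}$ satisfies $(\mathbf{A})$ $(i)$ and $(ii)$, the bound $\varphi'(x) \le \kappa_1 e^{\sqrt{\eta}x}$ together with $\varphi(x)\to 0$ as $x \to -\infty$ forces $\varphi$ to be integrable near $-\infty$, so the antiderivative $\varphi_{[1]}(x) = \int_{-\infty}^x \varphi(r)\,dr$ makes sense, and $\varphi_{[1]}$ is one degree more regular than $\varphi$, hence $\mathscr{C}^3$. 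To conclude $\varphi_{[1]}\in\mathcal{E}$ I must verify that $\varphi_{[1]}$ satisfies $(\mathbf{A})$ and $(\mathbf{B}(s))$ for every $s$; equivalently, that $P(s,\varphi_{[1]})$ holds for every $s\in\NN$. Fix an arbitrary $\overline{s}\in\NN$. Since $\varphi\in\mathcal{E}$, in particular $P(\overline{s}+1,\varphi)$ holds. Proposition \ref{procede_tri} then yields precisely $P(\overline{s},\varphi_{[1]})$. As $\overline{s}$ was arbitrary, $P(\overline{s},\varphi_{[1]})$ holds for all $\overline{s}\in\NN$, which is exactly the statement that $\varphi_{[1]}$ satisfies $(\mathbf{A})$ and $(\mathbf{B}(s))$ for all $s$, i.e.\ $\varphi_{[1]}\in\mathcal{E}$.

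The quantitative estimate \eqref{ineg_procede_tri} is not strictly needed for the qualitative membership statement of Proposition \ref{prop_stability}, but I would record how the constants propagate, since this is what ultimately feeds into the growth bound \eqref{est_lambda_s} on $\lambda_s$ and the polynomial decay of Proposition \ref{prop_decroissance}. For each $\overline{s}$, Proposition \ref{procede_tri} gives $C(\overline{s},\varphi_{[1]}) \le c(\eta,\kappa_1,\kappa_2)\, 2^{\mu_0^{\overline{s}}}\, C(\overline{s}+1,\varphi)$, so the admissibility constants for $\varphi_{[1]}$ in $(\mathbf{B}(\overline{s}))$ are controlled, for each fixed $\overline{s}$, by those of $\varphi$ at the next order $\overline{s}+1$. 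This is exactly the triangular shift — reading off the constant at level $\overline{s}$ for the integrated weight from the constant at level $\overline{s}+1$ for the original weight — and it is why the scheme must be run downward in $s$ from an a priori bound rather than by a naive forward induction.

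There is genuinely no obstacle remaining at this stage: all the analytic content has been packaged into Proposition \ref{procede_tri}, and Proposition \ref{prop_stability} is a clean quantifier manipulation over $\overline{s}\in\NN$. The only point requiring a moment of care is the bookkeeping of the universal quantifier: one must note that $\varphi\in\mathcal{E}$ supplies $(\mathbf{B}(\overline{s}+1))$ for the \emph{arbitrary} shifted index $\overline{s}+1$ (not merely for a fixed finite range), which is what licenses applying Proposition \ref{procede_tri} at every level simultaneously and hence concluding $(\mathbf{B}(\overline{s}))$ for $\varphi_{[1]}$ at every level. This is precisely the \emph{triangular} character announced before Proposition \ref{procede_tri}, and it is the mechanism that will later let us integrate an arbitrary number of times to reach weights $\varphi(y)=y^n$ for all $n\in\NN$.
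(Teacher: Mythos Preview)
Your proof is correct and matches the paper's approach exactly: the paper also treats Proposition \ref{prop_stability} as an immediate corollary of Proposition \ref{procede_tri}, obtained by applying the implication $P(\overline{s}+1,\varphi)\Rightarrow P(\overline{s},\varphi_{[1]})$ at every level $\overline{s}\in\NN$. One tangential remark: the quantitative estimate \eqref{ineg_procede_tri} does not feed into the bound \eqref{est_lambda_s} on $\lambda_s$ --- the dependence goes the other way (the growth of $\lambda_s$ is an input to Proposition \ref{procede_tri}, established independently in the Appendix) --- but this does not affect your argument.
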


\begin{Rq}\label{rq_exp}
Proposition \ref{prop_stability} is enough to prove that the multi-soliton and its derivatives have polynomial decay (see subsection \ref{subsec_rapid_dec}).
Note that if one could improve \eqref{est_lambda_s} and \eqref{ineg_procede_tri} by proving the existence of $C>0$ and $c(\eta,\kappa_1,\kappa_2)>0$ such that
\[ \forall s\in\NN,\quad\lambda_s\le C^s \]
and
\[ \forall s\in\NN,\:\forall\varphi\in\mathcal{E},\quad C(s,\varphi_{[1]})\le c(\eta,\kappa_1,\kappa_2)C(s+1,\varphi), \]
we would deduce that the multi-soliton and all its derivatives decay exponentially on the domain $x>\beta t$ (see paragraph \ref{subs_appendix_3} in the Appendix). 
\end{Rq}

\subsection{Proof of Proposition \ref{procede_tri}}

\begin{proof}[Proof of Claim \ref{claim_interaction}]
On the one hand, we have
\begin{align*}
\int_{x\le\beta t}e^{-\sqrt{c_j}|x-c_jt|}e^{\sqrt{\eta}(x-\beta t)}\;dx&\le \int_{x\le\beta t}e^{-\sqrt{c_j}|x-c_jt|}\;dx \le \int_{\RR}e^{-\sqrt{c_j}|x|}\;dx\le \frac{2}{\sqrt{c_j}}.
\end{align*}
On the other hand, since $\beta>c_N$ and $\beta\sqrt{\eta}<c_j\sqrt{c_j}$,
\begin{align*}
\int_{x>\beta t}e^{-\sqrt{c_j}|x-c_jt|}e^{\sqrt{\eta}(x-\beta t)}\;dx&\le e^{(c_j\sqrt{c_j}-\beta\sqrt{\eta})t}\int_{x> \beta t}e^{(\sqrt{\eta}-\sqrt{c_j})x}\;dx\\
&\le \int_{x> \beta t}e^{(\sqrt{\eta}-\sqrt{c_j})x}\;dx\le \frac{e^{(\sqrt{\eta}-\sqrt{c_j})\beta t}}{\sqrt{c_j}-\sqrt{\eta}}.
\end{align*}
Hence, noticing that we have also $\left|\partial_{x}^kR_j(t,x)\right|\le C_{j,k} e^{-\sqrt{c_j}|x-c_jt|}$, where $C_{j,k}$ is a constant depending on the parameters of the soliton $R_j$ and on $k$ only, Claim \ref{claim_interaction} holds.
\end{proof}

\begin{proof}[Proof of Proposition \ref{procede_tri}]
First of all, let us check that the properties gathered in ($\mathbf{A}$) are stable by integration, that is, if we assume that $\varphi$ satisfies ($\mathbf{A}$), then so does $\varphi_{[1]}$. \\
Assumption ($ii$) shows that $\varphi '$ is integrable in the neighborhood of $-\infty$ and 
\[ \forall x\in\RR,\quad 0\le \int_{-\infty}^x\varphi '(r)\;dr\le \kappa_1\int_{-\infty}^xe^{\sqrt{\eta}r}\;dr. \]
By (i), we thus obtain 
\begin{equation}\label{ineg_varphi}
\forall x\in\RR,\quad 0\le \varphi(x)\le \frac{\kappa_1}{\sqrt{\eta}}e^{\sqrt{\eta}x}.
\end{equation}
Then ($iii$) implies that $\varphi^{(3)}$ is integrable in the neighborhood of $-\infty$ and so $\varphi ''$ admits a limit in $-\infty$, which is necessarily 0 (since $\varphi '(x)\to 0$ as $x\to -\infty$). Finally, by integration, one obtains
\[ \forall x\in\RR,\quad |\varphi ''(x)|\le \kappa_2\varphi(x). \]
Hence $\varphi_{[1]}$ indeed satisfies ($\mathbf{A}$). \\

Now take $\overline{s}\in\NN$ and let us show that $\varphi_{[1]}$ verifies $\mathbf{B}(\overline{s})$ if one assumes that $\varphi$ satisfies $\mathbf{B}(\overline{s}+1)$.\\

We define for all $s\in\NN$, for all $x_0\ge 0$ and for all $t\ge T_0$:
\[ J_{s,x_0}(t):=\int_{\RR}\left(\partial_x^sz\right)^2(t,x)\varphi(x-x_0-\beta t)\;dx. \]

For ease of reading, we will denote $\tilde{x}=\tilde{x}(t):=x-x_0-\beta t$ (for $x_0>0$ and $t\ge T_0$). 

We first show the following induction formula which makes the link between the functions $J_{s,x_0}$, $s\in\NN$.

\begin{Lem}\label{lem_der_I}
For all $s\in\NN$, there exists $C_s\ge 0$ (independent of $x_0$) such that for all $t \ge T_0$:
\begin{equation}\label{est_der_I_1}
\left|\frac{d}{dt}J_{s,x_0}(t)\right|\le C_s\int_{\RR}\sum_{k=0}^{s+1}\left(\partial_x^{k}z\right)^2(t,x)\varphi'(\tilde{x})\;dx +C_se^{-\theta t}\sum_{k=0}^{s-1}J_{k,x_0}(t)+C_se^{-\sqrt{\eta}x_0}e^{-\theta t}.
\end{equation}
In addition, for all $\mu_1>\max\left\{\sqrt{p},\frac{p+1}{2}\right\}$, there exists $\gamma_1>0$ independent of $s$ such that for all $s$,
\begin{align} \label{est:Cs}
C_s\le \gamma_1 2^{\mu_1^s}.
\end{align}
\end{Lem}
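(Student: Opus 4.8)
The plan is to differentiate $J_{s,x_0}$ in time and substitute the evolution equation for $\partial_x^s z$, then sort the resulting terms into the three types appearing on the right-hand side of \eqref{est_der_I_1}. Recall $z = u - R$ solves a forced equation of the form $\partial_t z + \partial_x(\partial_x^2 z + (u^p - R^p)) = -\partial_t R - \partial_x(\partial_x^2 R)$, and since each $R_j$ exactly solves \eqref{gKdV}, the source reduces to the soliton interaction term $-\partial_x\big(R^p - \sum_j R_j^p\big)$. Writing $\partial_t \partial_x^s z = -\partial_x^{s+1}(\partial_x^2 z) - \partial_x^{s+1}(u^p - R^p) + (\text{source derivatives})$ and plugging into $\frac{d}{dt}J_{s,x_0} = 2\int (\partial_x^s z)(\partial_t \partial_x^s z)\varphi(\tilde x) - \beta \int (\partial_x^s z)^2 \varphi'(\tilde x)$, the idea is to integrate by parts repeatedly to move derivatives off the top-order factors and onto $\varphi$.

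First I would treat the linear dispersive part. The term $-2\int (\partial_x^s z)\,\partial_x^{s+3} z\,\varphi(\tilde x)\,dx$, after integration by parts and using the standard KdV-type algebra, produces a good (signed) boundary-type term together with remainders of the shape $\int (\partial_x^{s+1} z)^2 \varphi'(\tilde x)$ and $\int (\partial_x^s z)^2 \varphi^{(3)}(\tilde x)$; by assumption $(iii)$, $|\varphi^{(3)}| \le \kappa_2 \varphi'$, so the latter is absorbed into the first sum $\int \sum_{k=0}^{s+1}(\partial_x^k z)^2 \varphi'(\tilde x)$. The drift term $-\beta\int (\partial_x^s z)^2\varphi'$ is directly of that form (it has a favorable sign but in any case is dominated by $\varphi'$). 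This accounts for the first block on the right of \eqref{est_der_I_1}.

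Next comes the nonlinear contribution $-2\int(\partial_x^s z)\,\partial_x^{s+1}(u^p - R^p)\,\varphi(\tilde x)$. Here I expand $u^p - R^p = (z+R)^p - R^p$ via the binomial/Leibniz expansion, distributing the $s+1$ derivatives. Terms that are purely in $z$ (products of $\partial_x^{k}z$, $k\le s+1$) are controlled, after integration by parts and Sobolev/$L^\infty$ bounds on the lower-order factors, by $\int\sum_{k\le s+1}(\partial_x^k z)^2\varphi'$ plus, for the genuinely lower-order pieces, the term $e^{-\theta t}\sum_{k=0}^{s-1}J_{k,x_0}$ — this is where the induction-in-$s$ structure and the $H^s$ decay \eqref{est_Hs} enter, since the coefficients carry a factor $\|z\|_{H^{s}}\lesssim \lambda_s e^{-\theta t}$. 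Terms containing at least one factor of $R$ (the soliton--$z$ interaction and the pure interaction $R^p - \sum_j R_j^p$) are where assumption $(ii)$ and Claim \ref{claim_interaction} are used: bounding $\varphi(\tilde x)\le \tfrac{\kappa_1}{\sqrt\eta}e^{\sqrt\eta \tilde x} = \tfrac{\kappa_1}{\sqrt\eta}e^{-\sqrt\eta x_0}e^{\sqrt\eta(x-\beta t)}$ via \eqref{ineg_varphi}, then using $\int |\partial_x^k R_j|\,e^{\sqrt\eta(x-\beta t)}\,dx \le C 2^{\mu^k}$ from Claim \ref{claim_interaction} together with $\|z\|_{H^{s+1}}\lesssim \lambda_{s+1}e^{-\theta t}$, yields precisely the last term $C_s e^{-\sqrt\eta x_0}e^{-\theta t}$. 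The pure interaction term is similarly exponentially small in $t$ and decaying in $x_0$.

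The main obstacle is the bookkeeping of constants to establish the growth bound \eqref{est:Cs}, $C_s \le \gamma_1 2^{\mu_1^s}$. I would track how the Leibniz expansion of $\partial_x^{s+1}(u^p-R^p)$ generates a number of terms and binomial coefficients growing in $s$, each paired with a factor $\lambda_{s'}$ (bounded by \eqref{est_lambda_s}, i.e. $C\,2^{\mu_0^{s'}}$) and a Claim \ref{claim_interaction} constant $2^{\mu^{k}}$. The doubly-exponential form $2^{\mu_1^s}$ is exactly what is needed to dominate products of such factors: since $\mu_1 > \max\{\sqrt p, (p+1)/2\}$ strictly exceeds the exponents appearing in $\lambda_s$ and in the interaction estimate, the superexponential base $\mu_1^s$ absorbs the polynomial-in-$s$ proliferation of terms and the composition of the individual $2^{\mu^\bullet}$ bounds. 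Making this uniform in $s$ — choosing $\gamma_1$ independent of $s$ and verifying the recursion $C_s \lesssim (\text{poly}(s))\,2^{\mu_1^{s-1}}\cdots \le \gamma_1 2^{\mu_1^s}$ closes — is the delicate combinatorial step; everything else is integration by parts and application of the hypotheses $(ii)$, $(iii)$, Claim \ref{claim_interaction}, and \eqref{est_Hs}.
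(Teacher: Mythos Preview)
Your outline is correct and follows the paper's approach: differentiate $J_{s,x_0}$, use hypothesis $(iii)$ to absorb $\varphi^{(3)}$ into the $\varphi'$ block, split the nonlinearity as $z^p + \big[(z+R)^p - z^p - R^p\big] + \big[R^p - \sum_j R_j^p\big]$, and handle the mixed and pure-soliton pieces via \eqref{ineg_varphi} and Claim~\ref{claim_interaction} to produce the $e^{-\sqrt\eta x_0}e^{-\theta t}$ term; the constant tracking via $\lambda_s \le C\,2^{\mu_0^s}$ and $p^s$ combinatorial factors is also as in the paper.

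One point your sketch leaves implicit and which you should make precise: to obtain $\sum_{k=0}^{s-1} J_{k,x_0}$ (with the sum stopping at $s-1$, not $s$) from the pure-$z^p$ contribution, a naive expansion of $\partial_x^{s+1}(z^p)\,\partial_x^s z\,\varphi$ produces a term $p z^{p-1}\partial_x^{s+1}z\,\partial_x^s z\,\varphi$ whose integration by parts leaves a residual $\int (\partial_x^s z)^2 z^{p-2} z_x\,\varphi \lesssim e^{-\theta t} J_{s,x_0}$. The paper avoids this by one extra integration by parts on the $\varphi$-weighted piece, writing
\[
\int \partial_x^s(z^p)\,\partial_x^{s+1}z\,\varphi \;=\; -\int \partial_x^{s-1}(z^p)\big(\partial_x^{s+2}z\,\varphi + \partial_x^{s+1}z\,\varphi'\big),
\]
so that the Leibniz expansion of $\partial_x^{s-1}(z^p)$ involves only derivatives of order $\le s-1$, and the factor $\partial_x^{s+2}z$ is placed in $L^\infty$ via $\|z\|_{H^{s+3}} \le \lambda_{s+3} e^{-\theta t}$. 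This is exactly the manipulation that distinguishes the gKdV estimate \eqref{est_der_I_1} from its NLS analogue \eqref{est_rec_J}, where the extra $e^{-\theta t} J_{s,x_0}$ term does appear (see the Remark following Lemma~\ref{lem_est_rec_J}).
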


\begin{proof}
Let us compute 
\begin{align*}
\MoveEqLeft \frac{d}{dt}J_{s,x_0}(t)= -3\int_{\RR}\left(\partial_x^{s+1}z\right)^2\varphi'(\tilde{x})\;dx +\int_{\RR}\left(\partial_x^sz\right)^2\varphi^{(3)}(\tilde{x})\;dx \\
& -\beta\int_{\RR}\left(\partial_x^s z\right)^2\varphi'(\tilde{x})\;dx +2\int_{\RR}\partial_x^s\left(\left(z+R\right)^p-\sum_{j=1}^NR_j^p\right)\left(\partial_x^sz\varphi\right)_x\;dx.
\end{align*}

By ($\mathbf{A}$) ($iii$), we have 
\begin{equation}
\left|\int_{\RR}\left(\partial_x^sz\right)^2\varphi^{(3)}(\tilde{x})\;dx-\beta\int_{\RR}\left(\partial_x^s z\right)^2\varphi'(\tilde{x})\;dx\right|\le (\kappa_2+\beta) \int_{\RR}\left(\partial_x^s z\right)^2\varphi'(\tilde{x})\;dx.\end{equation}
We now control the nonlinear term $\int_{\RR}\partial_x^s\left(z^p\right)\left(\partial_x^sz\varphi\right)_x\;dx$ which does not contain any soliton. If $s=0$, we observe that
\[
\int_{\RR}z^p\left(z\varphi\right)'\;dx=\frac{p}{p+1}\int_{\RR}z^{p+1}\varphi'(\tilde{x})\;dx;
\]
thus \begin{equation}
\left|\int_{\RR}z^p\left(z\varphi\right)'\;dx\right|\le \frac{p}{p+1}\|z(t)\|_{L^\infty}^{p-1}\int_\RR z^2\varphi'(\tilde{x})\;dx\le C\int_\RR z^2\varphi'(\tilde{x})\;dx.
\end{equation}
If $s\ge 1$, we can write 
\[\int_{\RR}\partial_x^s\left(z^p\right)\left(\partial_x^sz\varphi\right)_x\;dx=\int_{\RR}\partial_x^s\left(z^p\right)\partial_x^sz\varphi'(\tilde{x})\;dx -\int_{\RR}\partial_x^{s-1}\left(z^p\right)\left(\partial_x^{s+2}z\varphi+\partial_x^{s+1}z\varphi'\right)\;dx.
\]
We have
\[ \partial^k_x\left(z^p\right)=\sum_{i_1+\dots+i_p=k}\dbinom{k}{i_1,\dots,i_p}\partial_x^{(i_1)}z\dots\partial_x^{(i_p)}z \]
so that 
\begin{align}\label{est_lem_1}
\left|\int_\RR\partial_x^{s-1}\left(z^p\right)\partial_x^{s+2}z\varphi\;dx\right| & \le \|\partial_x^{s+2}z(t)\|_{L^\infty}\sum_{i_1+\dots+i_p=s-1}\int_{\RR}\left|\partial_x^{(i_1)}z\right|\dots\left|\partial_x^{(i_p)}z\right|\varphi(\tilde{x})\;dx \nonumber \\
&\le C\|z(t)\|_{H^{s+3}}\frac{1}{p}\sum_{i_1+\dots+i_p=s-1}\sum_{k=1}^{p}\int_\RR\left|\partial_x^{(i_k)}z\right|^p\varphi(\tilde{x})\;dx \nonumber\\
&\le C\|z(t)\|_{H^{s+3}}\|z(t)\|_{H^{s}}^{p-2}\sum_{i_1+\dots+i_p=s-1}\sum_{k=1}^{p}\int_\RR\left(\partial_x^{(i_k)}z\right)^2\varphi(\tilde{x})\;dx \nonumber \\
&\le Cp^s\lambda_{s+3}\lambda_s^{p-2}e^{-(p-1)\theta t}\sum_{k=0}^{s-1}\int_\RR\left(\partial_x^kz\right)^2\varphi(\tilde{x})\;dx,
\end{align}
where we used \eqref{est_Hs}. Similarly we obtain
\begin{equation}\label{est_lem_2}
\left|\int_\RR\partial_x^{s-1}\left(z^p\right)\partial_x^{s+1}z\varphi'\;dx\right|\le Cp^s\lambda_{s+2}\lambda_s^{p-2}e^{-(p-1)\theta t}\int_\RR\sum_{k=0}^{s-1}\left(\partial_x^{k}z\right)^2\varphi'(\tilde{x})\;dx
\end{equation}
and 
\begin{equation}\label{est_lem_3}
\left|\int_\RR\partial_x^{s}\left(z^p\right)\partial_x^{s}z\varphi'\;dx\right|\le Cp^s\lambda_{s+2}\lambda_{s+1}^{p-2}e^{-(p-1)\theta t}\int_\RR\sum_{k=0}^{s}\left(\partial_x^{k}z\right)^2\varphi'(\tilde{x})\;dx.
\end{equation}

Hence we can take
\[ C_s\le Cp^s\lambda_{s+3}^{p-1}\le 2^{\mu_1^s} \]
 for all $\mu_1>\mu_0>\max\left\{\sqrt{p},\frac{p+1}{2}\right\}$ and $s$ large enough.

Moreover,
\[ \int_{\RR}\partial_x^s\left(\left(z+R\right)^p-\sum_{j=1}^NR_j^p-z^p\right)\left(\partial_x^sz\varphi\right)_x\;dx=I_1+I_2, \]
with 
\[ I_1=\int_\RR\partial_x^s\left(R^p-\sum_{j=1}^NR_j^p\right)\left(\partial_x^{s+1}z\varphi+\partial_x^sz\varphi '\right)\;dx \]
and
\[ I_2=\sum_{k=1}^{p-1}\dbinom{p}{k}\sum_{i_1+\dots+i_p=s}\dbinom{s}{i_1,\dots,i_p}\int_\RR\partial_x^{i_1}z\dots\partial_x^{i_k}z\partial_x^{i_{k+1}}R\dots\partial_x^{i_p}R\left(\partial_x^{s+1}z\varphi+\partial_x^sz\varphi '\right)\;dx. \]
We have
\begin{align*}
|I_1|&\le \left(\int_\RR\left(\partial_x^{s+1}z\right)^2\;dx\right)^{\frac{1}{2}}\left(\int_\RR\left(\partial_x^{s}\left(R^p-\sum_{j=1}^NR_j^p\right)\varphi\right)^2\;dx\right)^{\frac{1}{2}}\\
& \qquad +\left(\int_\RR\left(\partial_x^{s}z\right)^2\;dx\right)^{\frac{1}{2}}\left(\int_\RR\left(\partial_x^{s}\left(R^p-\sum_{j=1}^NR_j^p\right)\varphi '\right)^2\;dx\right)^{\frac{1}{2}}\\
&\le C\|z\|_{H^{s+1}}p^{s+1}2^{\mu^s}e^{-2\theta t}e^{-\sqrt{\eta}x_0}.
\end{align*}
Moreover
\begin{equation}\label{est_lem_4}
\begin{aligned}
|I_2|&\le \sum_{k=1}^{p-1}\dbinom{p}{k}\sum_{i_1+\dots+i_p=s}\dbinom{s}{i_1,\dots,i_p}\|z\|_{H^{s+2}}\|z\|_{H^{s+1}}^{k}\|R\|_{H^{s+1}}^{p-k-1}\int_\RR\left(\left|\partial_x^{i_p}R\varphi\right|+\left|\partial_x^{i_p}R\varphi '\right|\right)\;dx\\
&\le C2^pp^s\|z\|_{H^{s+2}}^{k+1}\|R\|_{H^{s+1}}^{p-k-1}
p^{s+1}2^{\mu^s}e^{-2\theta t}e^{-\sqrt{\eta}x_0},
\end{aligned}
\end{equation}
where the second inequality is a consequence of Claim \ref{claim_interaction}. Indeed, Claim \ref{claim_interaction} rewrites as follows: for all $x_0\ge 0$,
\[ \int_{\RR}\left|\partial_{x}^kR_j(t,x)\right|e^{\sqrt{\eta}(x-x_0-\beta t)}\;dx\le Ce^{-\sqrt{\eta}x_0}. \]
Thus, by property ($\mathbf{A}$) $(ii)$ satisfied by $\varphi$, we infer
\[ \int_{\RR}\left|\partial_{x}^kR_j(t,x)\right|\left(\varphi(\tilde{x})+\varphi '(\tilde{x})\right)\;dx\le Ce^{-\sqrt{\eta}x_0}. \]

Now, we obtain Lemma \ref{lem_der_I} by gathering the above estimates. We can find a constant $\gamma_1$ independent of $s$ and depending only on $\eta$, $\kappa_1$ and $\kappa_2$ such that for $s$ sufficiently large,
\[ C_s\le \gamma_12^{\mu_1^s}. \]
Even if it means taking $\gamma_1$ greater, we can assume that the above estimate holds for all $s$.
\end{proof}

\begin{Rq}
Let us observe that we could obtain sharper estimates than \eqref{est_lem_1}, \eqref{est_lem_2}, \eqref{est_lem_3}, and \eqref{est_lem_4} due to integrations by parts. But this would have only little impact on the growth rate in $s$ at this stage, and in the end, it would not improve \eqref{ineg_procede_tri}. 
\end{Rq}

Then, we obtain the following control of $J_{s,x_0}(t)$:

\begin{Lem}\label{lem_est_I}
For all $s\in \NN$, there exists a constant $K_s\ge 1$ such that for all $t\ge T_0$:
\begin{equation}\label{est_I_2}
J_{s,x_0}(t)\le K_s\int_{t}^{+\infty}\int_{\RR}\left(\sum_{k=0}^{s+1}\left(\partial_x^{k}z\right)^2(t',x)\varphi'(\tilde{x}(t'))\right)\;dx\;dt'+K_se^{-\sqrt{\eta}x_0}e^{-\theta t}.
\end{equation}
In addition, given $\mu_2>\max\left\{\sqrt{p},\frac{p+1}{2}\right\}$, there exists $\gamma_2>0$ such that for all $s$,
\[ K_s\le \gamma_22^{\mu_2^s}. \]
\end{Lem}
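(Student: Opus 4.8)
The goal is to pass from the differential inequality of Lemma~\ref{lem_der_I} to the integral bound \eqref{est_I_2}, so the natural first move is to integrate \eqref{est_der_I_1} in time over $[t,+\infty)$. Because $\varphi$ satisfies $(\mathbf B(\overline s+1))$ and hence $J_{k,x_0}(t')\to 0$ as $t'\to+\infty$ (this follows from $(\mathbf B(\overline s+1))$ applied with $x_0$ together with the monotonicity in $x_0$, since $\varphi$ is nondecreasing and $\varphi(x-x_0-\beta t')\le\varphi(x-\beta t')$), the boundary term at $+\infty$ vanishes and we obtain
\[
J_{s,x_0}(t)\le \int_t^{+\infty}\left|\frac{d}{dt'}J_{s,x_0}(t')\right|\,dt'.
\]
Plugging in \eqref{est_der_I_1}, the first term on the right-hand side is already of the desired form (the $\int_t^{+\infty}\int_\RR\sum_{k\le s+1}(\partial_x^k z)^2\varphi'\,dx\,dt'$ piece), and the last term integrates to $C_s e^{-\sqrt\eta x_0}\theta^{-1}e^{-\theta t}$, which is absorbed into $K_s e^{-\sqrt\eta x_0}e^{-\theta t}$. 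The only remaining difficulty is the middle term $C_s e^{-\theta t'}\sum_{k=0}^{s-1}J_{k,x_0}(t')$, which couples $J_{s,x_0}$ to the lower-order $J_{k,x_0}$ and so calls for an induction on $s$.

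The plan is therefore to argue by strong induction on $s$. For $s=0$ the middle sum is empty, and the bound \eqref{est_I_2} is immediate with $K_0$ essentially $\max(C_0,C_0/\theta)$. For the inductive step, I assume \eqref{est_I_2} holds for all indices $<s$ and estimate $\int_t^{+\infty}e^{-\theta t'}J_{k,x_0}(t')\,dt'$ for each $k\le s-1$. Using the induction hypothesis to bound $J_{k,x_0}(t')$ and then crudely dominating the extra time-decay factor $e^{-\theta t'}$ by $e^{-\theta t}$ (or absorbing it via $\int_t^{+\infty}e^{-\theta t'}\,dt'=\theta^{-1}e^{-\theta t}$), each such term is controlled by a constant multiple of the right-hand side of \eqref{est_I_2} at level $s$; here one uses that $\varphi'\le\kappa_1 e^{\sqrt\eta x}$ and the interaction Claim~\ref{claim_interaction} to keep the weighted integrals finite and uniformly bounded, and that the $e^{-\sqrt\eta x_0}e^{-\theta t}$ contributions stack up consistently. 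Collecting the constants yields $K_s$, and one checks they can be taken nondecreasing and $\ge 1$.

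The growth rate $K_s\le\gamma_2 2^{\mu_2^s}$ is then obtained by tracking constants through the induction. Since $C_s\le\gamma_1 2^{\mu_1^s}$ by \eqref{est:Cs} and the inductive passage multiplies by at most $C_s$ and a factor depending on $\theta$ and on the number $s$ of lower-order terms, the recursion for $K_s$ has the shape $K_s\le C_s(1+\sum_{k<s}K_k)$ up to harmless constants. Choosing any $\mu_2$ with $\max\{\sqrt p,(p+1)/2\}<\mu_1<\mu_2$ gives enough room: the doubly-exponential factor $2^{\mu_1^s}$ dominates both the sum $\sum_{k<s}2^{\mu_2^k}\le s\,2^{\mu_2^{s-1}}$ and the polynomial-in-$s$ combinatorial factors, so $C_s\cdot s\cdot 2^{\mu_2^{s-1}}\le \gamma_2 2^{\mu_2^s}$ for a suitable $\gamma_2$ and all large $s$ (and then for all $s$ by enlarging $\gamma_2$). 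The main obstacle, and the only genuinely delicate point, is this bookkeeping of constants: one must verify that the extra multiplicative losses at each step are subexponential relative to the gap between $\mu_1$ and $\mu_2$, so that the target double-exponential rate is preserved rather than degraded. The vanishing of the boundary term at $+\infty$ and the absorption of the lower-order $J_k$ are conceptually routine once the induction is set up.
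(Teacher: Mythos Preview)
Your proposal is correct and follows essentially the same approach as the paper: integrate the differential inequality \eqref{est_der_I_1} over $[t,+\infty)$, use strong induction on $s$ to absorb the lower-order $J_{k,x_0}$ terms via the induction hypothesis, and derive the recursion $K_s\le C_s\bigl(1+\sum_{k<s}K_k\bigr)$ to obtain the double-exponential bound. One minor point: your appeal to Claim~\ref{claim_interaction} and the bound $\varphi'\le\kappa_1 e^{\sqrt\eta x}$ is superfluous here, since the weighted integrals $\int_t^{+\infty}\int_\RR\sum_{k\le s+1}(\partial_x^k z)^2\varphi'(\tilde x)\,dx\,dt'$ are simply carried through as the right-hand side of \eqref{est_I_2} and need not be shown finite at this stage (their finiteness is only used later via $(\mathbf B(\overline s+1))$); but this does no harm to the argument.
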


\begin{proof}
It follows from \eqref{est_der_I_1} and an induction argument. 
Notice that for all $s\in \NN$, $J_{s,x_0}(t) \to 0$ as $t \to +\infty$. Thus, for $s=0$, \eqref{est_I_2} follows by integration of \eqref{est_der_I_1} between $t$ and $+\infty$.
Now assume that \eqref{est_I_2} is proved for $0,\dots,s-1$ for some particular $s\ge 1$. Then, by integration of \eqref{est_der_I_1} between $t$ and $+\infty$ (for $t\ge T_0$), it results:
\begin{align*}
J_{s,x_0}(t)\le& \; C_s\int_{t}^{+\infty}\int_{\RR}\sum_{k=0}^{s+1}\left(\partial_x^{k}z\right)^2(t',x)\varphi'(\tilde{x})\;dx\;dt'+C_se^{-\sqrt{\eta}x_0}\int_t^{+\infty}e^{-\theta t'}dt'\\
&\qquad +C_s\sum_{s'=0}^{s-1}K_{s'}\int_t^{+\infty}e^{-\theta t'}\int_{t'}^{+\infty}\int_{\RR}\sum_{k=0}^{s'+1}\left(\partial_x^{k}z\right)^2(t'',x)\varphi'(\tilde{x})\;dx\;dt''\;dt'\\
&\qquad +C_s\sum_{s'=0}^{s-1}K_{s'}e^{-\sqrt{\eta}x_0}\int_t^{+\infty}e^{-\theta t'}\;dt' \\
& \le C_s\int_{t}^{+\infty}\int_{\RR}\left(\sum_{k=0}^{s+1}\left(\partial_x^{k}z\right)^2(t',x)\varphi'(\tilde{x})\right)\;dx\;dt'\\
&\qquad +\sum_{s'=0}^{s-1}C_sK_{s'}\left(\int_t^{+\infty}\int_\RR\sum_{k=0}^{s'+1}\left(\partial_x^{k}z\right)^2(t'',x)\varphi'(\tilde{x})\;dx\;dt''\right)\int_t^{+\infty}e^{-\theta t'}dt'\\
&\qquad + \frac{C_s}{\theta} \max\{K_{s'},s'=0,\dots,s-1\} e^{-\sqrt{\eta}x_0}e^{-\theta t}.
\end{align*}
Hence there exists $K_{s}\ge 1$ such that
\begin{align}
\label{est:Ks}
K_s\le C_s+\sum_{s'=0}^{s-1}C_sK_{s'} \le 2C_s  \sum_{s'=0}^{s-1}K_{s'}
\end{align}
and for which
\[ J_{s,x_0}(t)\le K_s\int_{t}^{+\infty}\int_{\RR}\left(\sum_{k=0}^{s+1}\left(\partial_x^{k}z\right)^2(t',x)\varphi'(\tilde{x})\right)\;dx\;dt'+K_se^{-\sqrt{\eta}x_0}e^{-\theta t}. \]
From the inequality \eqref{est:Ks} and an induction argument, we can bound
\begin{align*}
K_s& \le 2C_s \left( \sum_{s'=0}^{s-2} K_{s'} + K_{s-1} \right) \le  2C_s (1 + 2C_{s-1})  \sum_{s'=0}^{s-2} K_{s'} \\
&\le 2C_s(1+2C_{s-1})\dots (1+2C_1)K_0 \\
& \le 2C_s\times 4C_{s-1}\times\dots\times 4C_1C_0 \le 2^{2s-1}\prod_{i=0}^s C_i \le 2^{\mu_2^s}, 
\end{align*}
for all $\mu_2>\mu_1$ and $s$ sufficiently large (see \eqref{est:Cs} in Lemma \ref{lem_der_I}).
\end{proof}

Let us now conclude the proof of $P(\overline{s},\varphi_{[1]})$. We integrate estimate \eqref{est_I_2} provided by Lemma \ref{lem_est_I} on $[0,+\infty)$ with respect to $x_0$.
We obtain by Fubini theorem: for $t\ge T_0$,
\begin{align*}
\MoveEqLeft
\int_\RR\left(\partial_x^{\overline{s}}z\right)^2(t,x)\int_{0}^{+\infty}\varphi(x-x_0-\beta t)\;dx_0\;dx\\
& \le K_{\overline{s}}\int_t^{+\infty}\int_\RR\sum_{k=0}^{\overline{s}+1}\left(\partial_x^kz\right)^2(t',x)\int_{0}^{+\infty}\varphi '(x-x_0-\beta t')\;dx_0\;dx\;dt' +\frac{K_{\overline{s}}}{\sqrt{\eta}}e^{-\theta t}
\end{align*} and then by an affine change of variable
\begin{align*}
\int_\RR\left(\partial_x^{\overline{s}}z\right)^2(t,x)\varphi_{[1]}(x-\beta t)\;dx \le K_{\overline{s}}\int_t^{+\infty}\int_\RR\sum_{k=0}^{\overline{s}+1}\left(\partial_x^kz\right)^2(t',x)\varphi(x-\beta t')\;dx\;dt' +\frac{K_{\overline{s}}}{\sqrt{\eta}}e^{-\theta t}.
\end{align*}
Considering that $\varphi$ satisfies $(\mathbf{B}(\overline{s}+1))$, this finally shows that
\[ \int_\RR\left(\partial_x^{\overline{s}}z\right)^2(t,x)\varphi_{[1]}(x-\beta t)\;dx\le \frac{K_{\overline{s}}}{\theta}\sum_{k=0}^{\overline{s}+1}C(k,\varphi)e^{-\theta t}+\frac{K_{\overline{s}}}{\sqrt{\eta}}e^{-\theta t} \]
Hence, $\varphi_{[1]}$ satisfies $(\mathbf{B}(\overline{s}))$ and one can take
\[ C(\overline{s},\varphi_{[1]})\le C (\overline{s}+2)K_{\overline{s}}C(\overline{s}+1,\varphi). \]

Thus we obtain \eqref{ineg_procede_tri}, which finishes proving Proposition \ref{procede_tri}. 
\end{proof}

\subsection{Rapid decrease on the right: proof of Proposition \ref{prop_decroissance}}\label{subsec_rapid_dec}

\begin{proof}
Now, we show polynomial decay of $z$ and its derivatives. This consists in an application of Proposition \ref{prop_stability} and is the object of Claim \ref{claim_poly} and Claim \ref{claim_est_z} below.

Set $\eta\in\left(0,c_1\right)$ and introduce the function $\varphi:\RR\to\RR$ defined by 
\[ \varphi(x):=\frac{2}{\pi}\arctan\left(e^{\sqrt{\eta}x}\right). \]
The precise form of $\varphi$ is not that important, but this expression is convenient. Observe that $\varphi\in\mathcal{E}$, in view of \eqref{est_Hs} and due to $\varphi$ being bounded. We define a sequence $\left(\varphi_{[n]}\right)_{n\in\NN}$ of functions $\RR\to\RR$ as follows: $\varphi_{[0]}:=\varphi$ and for all $n\in\NN^*$, for all $x\in\RR$,
\[ \varphi_{[n]}(x):=\int_{-\infty}^x\varphi_{[n-1]}(y)\;dy. \]
By Proposition \ref{prop_stability}, we have 
\[ \forall n\in\NN,\qquad \varphi_{[n]}\in\mathcal{E}. \]
The following claim motivates the introduction of this sequence $\left(\varphi_{[n]}\right)_{n}$.

\begin{Claim}[Polynomial growth of $\varphi_{[n]}$]\label{claim_poly}
We have for all $n\in\NN$
\begin{equation}\label{eq_poly_n_1}
\forall x\le 0,\qquad 0\le \varphi_{[n]}(x)\le \frac{1}{\sqrt{\eta}^n}e^{\sqrt{\eta}x}
\end{equation}
and
\begin{equation}\label{eq_poly_n_2}
\forall x\ge 0,\qquad \frac{1}{2}\frac{x^n}{n!}\le \varphi_{[n]}(x)\le \sum_{k=0}^n\frac{1}{\sqrt{\eta}^{n-k}}\frac{x^k}{k!}.
\end{equation}
\end{Claim}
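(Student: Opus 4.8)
The plan is to prove both bounds by induction on $n$, exploiting the recursive definition $\varphi_{[n]}(x) = \int_{-\infty}^x \varphi_{[n-1]}(y)\,dy$ and treating the regions $x \le 0$ and $x \ge 0$ by separate but linked arguments. The base case $n=0$ follows directly from the explicit formula $\varphi(x) = \frac{2}{\pi}\arctan(e^{\sqrt{\eta}x})$: for $x \le 0$ one uses $\arctan(u) \le u$ to get $\varphi(x) \le \frac{2}{\pi}e^{\sqrt{\eta}x} \le e^{\sqrt{\eta}x}$ (and positivity is clear), matching \eqref{eq_poly_n_1} with the empty product convention $\sqrt{\eta}^0 = 1$; for $x \ge 0$ the lower bound $\varphi(x) \ge \frac{1}{2}$ comes from $\arctan(e^{\sqrt{\eta}x}) \ge \arctan(1) = \frac{\pi}{4}$, and the upper bound $\varphi(x) \le 1$ handles the $n=0$ case of \eqref{eq_poly_n_2}.

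For the inductive step establishing \eqref{eq_poly_n_1}, I would integrate the hypothesis $0 \le \varphi_{[n-1]}(y) \le \frac{1}{\sqrt{\eta}^{n-1}}e^{\sqrt{\eta}y}$ over $(-\infty, x]$ for $x \le 0$; the integral of the exponential gives exactly $\frac{1}{\sqrt{\eta}^{n-1}} \cdot \frac{1}{\sqrt{\eta}}e^{\sqrt{\eta}x} = \frac{1}{\sqrt{\eta}^n}e^{\sqrt{\eta}x}$, which closes the induction on the negative half-line. This is the clean part, since on $x \le 0$ the function never leaves the exponential regime.

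The inductive step for \eqref{eq_poly_n_2} on $x \ge 0$ is where the argument splits. I would write $\varphi_{[n]}(x) = \varphi_{[n]}(0) + \int_0^x \varphi_{[n-1]}(y)\,dy$, observing that $\varphi_{[n]}(0) = \int_{-\infty}^0 \varphi_{[n-1]}(y)\,dy$ is controlled by the $x=0$ value of the bound \eqref{eq_poly_n_1}, namely $\varphi_{[n]}(0) \le \frac{1}{\sqrt{\eta}^n}$. For the lower bound I would use only the contribution of the positive part: $\varphi_{[n]}(x) \ge \int_0^x \varphi_{[n-1]}(y)\,dy \ge \int_0^x \frac{1}{2}\frac{y^{n-1}}{(n-1)!}\,dy = \frac{1}{2}\frac{x^n}{n!}$, applying the inductive lower bound from \eqref{eq_poly_n_2}. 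For the upper bound I would combine $\varphi_{[n]}(0) \le \frac{1}{\sqrt{\eta}^n}$ with $\int_0^x \sum_{k=0}^{n-1}\frac{1}{\sqrt{\eta}^{n-1-k}}\frac{y^k}{k!}\,dy = \sum_{k=0}^{n-1}\frac{1}{\sqrt{\eta}^{n-1-k}}\frac{x^{k+1}}{(k+1)!}$; reindexing $k \mapsto k-1$ turns this into $\sum_{k=1}^{n}\frac{1}{\sqrt{\eta}^{n-k}}\frac{x^k}{k!}$, and adding the $k=0$ term from $\varphi_{[n]}(0)$ reconstructs exactly the claimed sum $\sum_{k=0}^n \frac{1}{\sqrt{\eta}^{n-k}}\frac{x^k}{k!}$.

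The only subtle bookkeeping — and the step most prone to an off-by-one error — is matching the constant from $\varphi_{[n]}(0)$ to the $k=0$ term $\frac{1}{\sqrt{\eta}^n}$ in the telescoped sum, and verifying that the integration shifts the power and the factor $\sqrt{\eta}^{-(n-1-k)}$ consistently. I expect no genuine analytic obstacle here: everything reduces to integrating monomials and exponentials and tracking indices, so the main care is purely combinatorial in aligning the two sums. Once \eqref{eq_poly_n_2} is established, it shows $\varphi_{[n]}$ grows like $x^n/n!$, which is precisely what converts the stability result (Proposition \ref{prop_stability}) into the polynomial spatial decay of Proposition \ref{prop_decroissance}.
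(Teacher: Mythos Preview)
Your proposal is correct and follows essentially the same argument as the paper: induction on $n$, with the base case handled via $\arctan(u)\le u$ and $\arctan(1)=\pi/4$, the inductive bound on $(-\infty,0]$ by integrating the exponential, and the bound on $[0,\infty)$ by splitting $\varphi_{[n]}(x)=\varphi_{[n]}(0)+\int_0^x\varphi_{[n-1]}$ and using $\varphi_{[n]}(0)\le \eta^{-n/2}$ as the $k=0$ term of the sum. The only difference is cosmetic (you index the step $n-1\to n$ while the paper writes $n\to n+1$).
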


\begin{proof}
We argue by induction on $n$.\\
Note that $\varphi_{[0]}=\varphi$ is an increasing function and that 
\[ \forall t\ge 0,\quad\arctan{t}\le t. \]
Thus
\begin{align*}
\forall x\le 0,&\qquad 0\le \varphi_{[0]}(x)\le \frac{2}{\pi}e^{\sqrt{\eta}x}\le e^{\sqrt{\eta}x}\\
\forall x\ge 0,&\qquad \frac{1}{2}=\varphi(0)\le \varphi_{[0]}(x)\le 1.
\end{align*}
Now assume that \eqref{eq_poly_n_1} and \eqref{eq_poly_n_2} hold for some $n\in\NN$ being fixed. 

\bigskip

By definition of $\varphi_{[n+1]}$ and by the induction assumption, we have for all $x\le 0$
\[ 0 \le \varphi_{[n+1]}(x)\le \int_{-\infty}^x \frac{1}{\sqrt{\eta}^{n}}e^{\sqrt{\eta}t}\;dt\le \frac{1}{\sqrt{\eta}^{n+1}}e^{\sqrt{\eta}x}. \]
In particular, 
\[ 0\le\varphi_{[n+1]}(0)\le \frac{1}{\sqrt{\eta}^{n+1}}. \]
By the induction assumption, we then infer that for all $x\ge 0$,
\[ \varphi_{[n+1]}(x)=\varphi_{[n+1]}(0)+\int_0^x\varphi_{[n]}(t)\;dt \]satisfies
\[ 0+\int_0^x\frac{1}{2}\frac{t^n}{n!}\;dt\le \varphi_{[n+1]}(x)\le \frac{1}{\sqrt{\eta}^{n+1}}+\int_0^x\sum_{k=0}^n\frac{1}{\sqrt{\eta}^{n-k}}\frac{t^k}{k!}\;dt. \]
Thus for all $x\ge 0$,
\[ \frac{1}{2}\frac{x^{n+1}}{(n+1)!}\le \varphi_{[n+1]}(x)\le \frac{1}{\sqrt{\eta}^{n+1}}+\sum_{k=0}^{n}\frac{1}{\sqrt{\eta}^{n-k}}\frac{x^{k+1}}{(k+1)!}= \sum_{k=0}^{n+1}\frac{1}{\sqrt{\eta}^{n+1-k}}\frac{x^k}{k!}. \]
This finishes the induction argument, hence the proof of Claim \ref{claim_poly}.
\end{proof}

\begin{Claim}\label{claim_est_z}
For all $n\in\NN$, for all $s\in\NN$, there exists $K_{s,n}\ge 0$ such that for all $t\ge T_0$,
\[ \int_{x\ge \beta t}\left(\partial_x^sz\right)^2(x-\beta t)^n\;dx\le K_{s,n}e^{-\theta t}. \]
\end{Claim}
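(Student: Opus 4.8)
The plan is to read the claim directly off the two facts already in place: the membership $\varphi_{[n]}\in\mathcal{E}$ and the polynomial lower bound of Claim \ref{claim_poly}. Recall that we have just established $\varphi_{[n]}\in\mathcal{E}$ for every $n\in\NN$ (by iterating Proposition \ref{prop_stability} from $\varphi\in\mathcal{E}$). By definition of $\mathcal{E}$, this means $\varphi_{[n]}$ satisfies property $(\mathbf{B}(s))$ for \emph{every} $s\in\NN$: there is a constant $C(s,\varphi_{[n]})>0$ such that for all $t\ge T_0$,
\[ \int_{\RR}\left(\partial_x^s z\right)^2(t,x)\,\varphi_{[n]}(x-\beta t)\;dx \le C(s,\varphi_{[n]})\,e^{-\theta t}. \]
This weighted bound, with its built-in $e^{-\theta t}$ decay, is the engine of the proof; the task reduces to comparing the weight $\varphi_{[n]}$ with the monomial $(x-\beta t)^n$ on the region of interest.

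First I would record that $\varphi_{[n]}\ge 0$ on all of $\RR$: indeed Claim \ref{claim_poly} gives $0\le\varphi_{[n]}(x)$ for $x\le 0$, and $\varphi_{[n]}(x)\ge\frac{1}{2}\frac{x^n}{n!}\ge 0$ for $x\ge 0$. This global nonnegativity is what allows me to enlarge the domain of integration from $\{x\ge\beta t\}$ to all of $\RR$ without reversing the inequality. Next, on the region $x\ge\beta t$ one has $x-\beta t\ge 0$, so the lower bound of Claim \ref{claim_poly} applies with argument $x-\beta t$, giving the pointwise estimate $\frac{1}{2\,n!}(x-\beta t)^n\le\varphi_{[n]}(x-\beta t)$.

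Combining the pointwise bound with the positivity just noted, I would estimate
\[ \frac{1}{2\,n!}\int_{x\ge\beta t}\left(\partial_x^s z\right)^2 (x-\beta t)^n\;dx \le \int_{x\ge\beta t}\left(\partial_x^s z\right)^2\varphi_{[n]}(x-\beta t)\;dx \le \int_{\RR}\left(\partial_x^s z\right)^2\varphi_{[n]}(x-\beta t)\;dx. \]
The rightmost integral is bounded by $C(s,\varphi_{[n]})\,e^{-\theta t}$ by property $(\mathbf{B}(s))$, so setting $K_{s,n}:=2\,n!\,C(s,\varphi_{[n]})$ yields exactly the asserted inequality.

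There is no serious obstacle remaining: all the difficulty has been absorbed into Proposition \ref{prop_stability} (the stability of $\mathcal{E}$ under integration, which is the heart of the whole argument) and into the growth estimates of Claim \ref{claim_poly}. The only points requiring minor care are that the polynomial lower bound is valid only where $x-\beta t\ge 0$ — precisely the region $\{x\ge\beta t\}$ we integrate over — and that the global nonnegativity of $\varphi_{[n]}$ legitimizes extending the weighted integral from $\{x\ge\beta t\}$ to $\RR$.
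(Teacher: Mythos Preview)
Your proof is correct and follows essentially the same approach as the paper: both use the lower bound from Claim \ref{claim_poly} on $\{x\ge\beta t\}$, extend the integral to $\RR$ by nonnegativity of $\varphi_{[n]}$, and invoke $(\mathbf{B}(s))$ from $\varphi_{[n]}\in\mathcal{E}$, arriving at the same constant $K_{s,n}=2\,n!\,C(s,\varphi_{[n]})$. Your write-up is in fact slightly more careful in spelling out the nonnegativity step.
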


\begin{proof}
We have already observed that $\varphi_{[n]}$ belongs to $\mathcal{E}$. Using Claim \ref{claim_poly}, we deduce that for all $s\in\NN$, 
\begin{align*}
\int_{x\ge \beta t}\left(\partial_x^sz\right)^2(x-\beta t)^n\;dx\le 2n!\int_\RR\left(\partial_x^sz\right)^2\varphi_{[n]}(x-\beta t)\;dx \le K_{s,n}e^{-\theta t},
\end{align*}
where $K_{s,n}:=2n!C(s,\varphi_{[n]})$.
\end{proof}

Proposition \ref{prop_decroissance} follows now from Claim \ref{claim_est_z}, from the Sobolev embedding $H^1(\RR)\hookrightarrow L^\infty(\RR)$, and the decay of all derivatives of the $R_j$.
\end{proof}

\begin{Rq}\label{rq_surpoly}
For all $s\in\NN$, there exists $\varphi$ satisfying ($\mathbf{A}$) and ($\mathbf{B}$($s$)) and which grows faster than each polynomial function as $x\to +\infty$. This follows from Proposition \ref{procede_tri}: given $\tilde{\mu_0}>\max\left\{\sqrt{p},\frac{p+1}{2}\right\}$, the sum of the series of functions
\[ \sum_{n\ge 0}\frac{x^n}{2^{\tilde{\mu_0}^{s+n}}}  \in \mathcal E. \]
See paragraph \ref{subs_appendix_3} in the Appendix for details.
\end{Rq}

\section{Rapid decay of the (NLS) multi-solitons}

In this section, we turn to the case of the nonlinear Schrödinger equation \eqref{NLS} (without computing explicitly the dependence of the constants with respect to the differential parameter $s$) and, by adapting the technique exposed in the previous section, we prove Theorem \ref{th_decay_NLS} (which focuses on the region far away the solitons). We do it for general smooth nonlinearities $g$ as in Remark \ref{rq:generalization}.

 Concerning the exponential decay of (NLS) multi-solitons inside the solitons region \eqref{exp_decay_solitons_region}, the proof is completely similar to that of Proposition \ref{prop:soliton} and is left to the reader.

\bigskip

Let us take $\beta>\max_j\{|v_j|\}$ and $0<\eta\le\min_j\{\omega_j\}$. Note that with this choice of parameters $\beta$ and $\eta$, the following interaction property (analogous to \eqref{ineg_interaction}) holds:

\begin{Claim}\label{claim_interaction_NLS}
For all $s\in\NN^d$, there exists $C(s)>0$ such that for all $j=1,\dots,N$ and for all $t\ge T_0$,
\begin{equation}
\int_{\RR^d}\left|\partial^s R_j(t,x)\right|e^{\sqrt{\eta}(|x|-\beta t)}\;dx \le C(s).
\end{equation}
\end{Claim}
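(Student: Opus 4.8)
The plan is to reduce the claim to the scalar exponential localization of the ground state recorded in \eqref{exp_decay_ground_state}, and then to estimate the weighted integral of a single moving, shifted exponential bump, splitting space exactly as in the proof of Claim \ref{claim_interaction}.

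First I would bound $\partial^s R_j$ pointwise. Writing $R_j(t,x)=Q_{\omega_j}(x-x_j^0-v_jt)\,e^{i\phi_j(t,x)}$ with $\phi_j(t,x)=\tfrac12 v_j\cdot x+(\omega_j-\tfrac{|v_j|^2}{4})t+\gamma_j$, the Leibniz rule gives
\[ \partial^s R_j=\sum_{\tau\le s}\binom{s}{\tau}\,(\partial^{s-\tau}Q_{\omega_j})(\,\cdot-x_j^0-v_jt)\,\partial^\tau e^{i\phi_j}. \]
Since $\nabla_x\phi_j=\tfrac12 v_j$ is constant in $x$, each factor $\partial^\tau e^{i\phi_j}$ is $e^{i\phi_j}$ times a constant of modulus at most $(|v_j|/2)^{|\tau|}$. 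Moreover, as the nonlinearity is smooth (Section 4 works under the assumptions of Remark \ref{rq:generalization}), $Q_{\omega_j}$ and all its derivatives are exponentially localized, so \eqref{exp_decay_ground_state} applies to each $\partial^{s-\tau}Q_{\omega_j}$. Collecting these yields a constant $C(s)>0$, depending on $s$ and on the soliton parameters, with
\[ |\partial^s R_j(t,x)|\le C(s)\,e^{-\sqrt{\omega_j}\,|x-x_j^0-v_jt|},\qquad (t,x)\in[T_0,+\infty)\times\RR^d. \]

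Next I would insert this bound and split $\RR^d$ into $\{|x|\le\beta t\}$ and $\{|x|>\beta t\}$. On $\{|x|\le\beta t\}$ the weight obeys $e^{\sqrt\eta(|x|-\beta t)}\le 1$, so that piece is dominated by $C(s)\int_{\RR^d}e^{-\sqrt{\omega_j}|x-x_j^0-v_jt|}\,dx$, a finite constant independent of $t$ after the translation $y=x-x_j^0-v_jt$. On $\{|x|>\beta t\}$ I would apply the reverse triangle inequality $|x-x_j^0-v_jt|\ge |x|-|x_j^0|-|v_j|t$, which bounds the integrand by
\[ C(s)\,e^{\sqrt{\omega_j}|x_j^0|}\,e^{(\sqrt{\omega_j}|v_j|-\sqrt\eta\,\beta)t}\,e^{-(\sqrt{\omega_j}-\sqrt\eta)|x|}. \]

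The remaining, and only genuinely $d$-dimensional, task is to control $e^{(\sqrt{\omega_j}|v_j|-\sqrt\eta\,\beta)t}\int_{|x|>\beta t}e^{-(\sqrt{\omega_j}-\sqrt\eta)|x|}\,dx$ uniformly in $t\ge T_0$. Here $\sqrt\eta<\sqrt{\omega_j}$ (this is where the strict bound $\eta<\min_j\omega_j$ is used), so polar coordinates give $\int_{|x|>\beta t}e^{-(\sqrt{\omega_j}-\sqrt\eta)|x|}\,dx\le C(\beta t)^{d-1}e^{-(\sqrt{\omega_j}-\sqrt\eta)\beta t}$ once $\beta t\ge 1$, the surface-measure factor $(\beta t)^{d-1}$ being the one new feature compared with Claim \ref{claim_interaction}. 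Collecting the exponents in $t$ produces a total factor $C(\beta t)^{d-1}e^{\sqrt{\omega_j}(|v_j|-\beta)t}$; since $\beta>|v_j|$ the exponent is strictly negative, so the exponential decay absorbs the polynomial $(\beta t)^{d-1}$ and the expression stays bounded on $[T_0,+\infty)$. Thus both pieces are uniformly bounded and the claim follows with a constant $C(s)$ of the asserted form. The one point requiring care — hence the main (mild) obstacle — is precisely this interplay in the outer region: one must check that the polynomial growth from the $d$-dimensional volume element is dominated by the time decay coming from $\beta>\max_j|v_j|$, and that the spatial integral converges thanks to $\eta<\min_j\omega_j$.
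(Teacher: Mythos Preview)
Your proof is correct and follows essentially the same route as the paper's: pointwise exponential decay of $\partial^s R_j$ reduced to \eqref{exp_decay_ground_state}, then the same splitting into $\{|x|\le\beta t\}$ and $\{|x|>\beta t\}$ with the reverse triangle inequality on the outer piece. You are in fact slightly more careful than the paper on two points---you spell out the Leibniz expansion with the phase factor, and you track the polynomial $(\beta t)^{d-1}$ from the $d$-dimensional volume element (the paper hides this in a constant $C_d$); you also correctly note that $\eta<\min_j\omega_j$ strictly is what is actually used, whereas the paper writes $\eta\le\min_j\omega_j$.
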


\begin{proof}
Let us use the exponential decay of the ground states \eqref{exp_decay_ground_state} and observe that
\begin{align*}
\int_{|x|\le\beta t}e^{-\sqrt{\omega_j}|x-v_jt-x_j^0|}e^{\sqrt{\eta}(|x|-\beta t)}\;dx&\le \int_{|x|\le\beta t}e^{-\sqrt{\omega_j}|x-v_jt-x_j^0|}\;dx \le \int_{\RR^d}e^{-\sqrt{\omega_j}|x|}\;dx\le C(j).
\end{align*}
On the other hand, we obtain
\begin{align*}
\int_{|x|>\beta t}e^{-\sqrt{\omega_j}|x-v_jt-x_j^0|}e^{\sqrt{\eta}(|x|-\beta t)}\;dx&\le Ce^{(\sqrt{\omega_j}|v_j|-\sqrt{\eta}\beta)t}\int_{|x|> \beta t}e^{-(\sqrt{\omega_j}-\sqrt{\eta})|x|}\;dx\\
&\le Ce^{(\sqrt{\omega_j}|v_j|-\sqrt{\eta}\beta)t}C_de^{-(\sqrt{\omega_j}-\sqrt{\eta})\beta t}\\
&\le Ce^{\sqrt{\omega_j}(|v_j|-\beta)t}.\qedhere
\end{align*}
\end{proof}

We recall the notation $z:=u-\sum_{j=1}^NR_j$ and the decay rate $\theta>0$ such that
\[ \forall t\ge T_0,\quad \|z(t)\|_{H^s}\le C_se^{-\theta t}. \]
In the spirit of the previous subsection, we consider weight functions $\varphi\in\mathscr{C}^1(\RR,\RR)$ such that the following assumptions are satisfied:

\begin{align}
&(i) \quad\displaystyle\lim_{x\to -\infty}\varphi(x)=0 \tag{$\mathbf{A'}$}\\
&(ii) \quad\exists \kappa_1>0,\:\forall x\in\RR,\quad 0\le \varphi '(x)\le \kappa_1e^{\sqrt{\eta} x}\notag
\end{align}

and for given $\overline{s}\in\NN$,

\begin{align}
&\exists C({\overline{s}},\varphi)>0,\:\forall \sigma\in\NN^d,\quad |\sigma|\le \overline{s}\Rightarrow\forall t\ge T_0,\quad \int_{\RR}\left|\partial^\sigma z(t,x)\right|^2\varphi(|x|-\beta t)\;dx\le C(\overline{s},\varphi)e^{-\theta t}.\tag{$\mathbf{B'}(\overline{s})$}
\end{align}

Similarly to the case of the (gKdV) equation, we define the following property which depends on $\overline{s}$ and $\varphi$:
\[ P'(\overline{s},\varphi):\quad \varphi \text{ satisfies } (\mathbf{A'}) \text{ and } (\mathbf{B'}(\overline{s})). \]
We state

\begin{Prop}\label{prop_P'_NLS}
For any $\overline{s}\in\NN$, $P'(\overline{s}+1,\varphi)\Rightarrow P'(\overline{s},\varphi_{[1]})$.
\end{Prop}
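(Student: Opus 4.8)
The plan is to transpose, line for line, the proof of Proposition \ref{procede_tri} from the third-order \eqref{gKdV} dynamics to the Schrödinger flow. I would first record the equation solved by $z=u-\sum_jR_j$: since each $R_j$ solves \eqref{NLS}, setting $R=\sum_jR_j$ one has $i\partial_t z+\Delta z+N=0$ with $N:=g(u)-\sum_{j=1}^N g(R_j)$, which I split as $N=[g(z+R)-g(R)]+[g(R)-\sum_j g(R_j)]$, the last bracket being a pure soliton-interaction term controlled by Claim \ref{claim_interaction_NLS}. The stability of $(\mathbf{A'})$ under $\varphi\mapsto\varphi_{[1]}$ is immediate: integrating $(\mathbf{A'})(ii)$ gives $0\le\varphi(x)\le\frac{\kappa_1}{\sqrt\eta}e^{\sqrt\eta x}$, so $\varphi_{[1]}(x)\le\frac{\kappa_1}{\eta}e^{\sqrt\eta x}\to 0$ and $\varphi_{[1]}'=\varphi$ satisfies $(ii)$. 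Note that, the equation being second order in space, no analogue of $(\mathbf A)(iii)$ is needed and $\mathscr C^1$ weights suffice. The substance is therefore to propagate $(\mathbf{B'})$.

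For $x_0\ge 0$ I set $J_{s,x_0}(t):=\sum_{|\sigma|=s}\int_{\RR^d}|\partial^\sigma z|^2\varphi(\tilde x)\,dx$ with $\tilde x=|x|-x_0-\beta t$, and differentiate, inserting $\partial_t\partial^\sigma z=i\Delta\partial^\sigma z+i\partial^\sigma N$. This produces three kinds of terms. The transport term gives $-\beta\int|\partial^\sigma z|^2\varphi'$. The kinetic term, via $\R(i\Delta w\,\bar w)=-\I(\Delta w\,\bar w)$ and one integration by parts, equals $2\int\I(\bar w\,\partial_r w)\varphi'$ with $w=\partial^\sigma z$ and $\partial_r$ the radial derivative (using $\nabla[\varphi(\tilde x)]=\varphi'(\tilde x)\,x/|x|$); by Cauchy–Schwarz it is bounded by $\int(|\partial^\sigma z|^2+|\nabla\partial^\sigma z|^2)\varphi'$, hence by $\varphi'$-weighted integrals of orders $s$ and $s+1$ only. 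The delicate term is the nonlinear one $2\R\int i\,\partial^\sigma N\,\overline{\partial^\sigma z}\,\varphi$, and here the crucial structural input is gauge invariance. At top order $\partial^\sigma N=\partial_u g(u)\,\partial^\sigma z+\partial_{\bar u}g(u)\,\overline{\partial^\sigma z}+(\text{lower order and }R\text{ terms})$; since $g(z)=zf(|z|^2)$, the coefficient $\partial_u g(u)=f(|u|^2)+|u|^2f'(|u|^2)$ is \emph{real}, so $2\R\int i\,\partial_u g(u)\,|\partial^\sigma z|^2\varphi=0$ exactly. The remaining top-order piece $2\R\int i\,\partial_{\bar u}g(u)\,(\overline{\partial^\sigma z})^2\varphi$ is bounded by $C\int|u|^{p-1}|\partial^\sigma z|^2\varphi$; splitting $u=z+R$, its pure-$z$ part is $\le C\|z\|_{L^\infty}^{p-1}J_{s,x_0}\le Ce^{-(p-1)\theta t}J_{s,x_0}$, while the $R$-part—together with every product in which at least one $R$-derivative occurs and the interaction bracket $g(R)-\sum_jg(R_j)$—is controlled through Claim \ref{claim_interaction_NLS} and $(\mathbf{A'})(ii)$ and yields $Ce^{-\sqrt\eta x_0}e^{-\theta t}$; the genuinely lower-order pure-$z$ products give $Ce^{-\theta t}J_{k,x_0}$ with $k<s$. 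Collecting everything,
\[
\Big|\frac{d}{dt}J_{s,x_0}\Big|\le C_s\!\!\sum_{k\in\{s,s+1\}}\!\!\int\sum_{|\sigma|=k}|\partial^\sigma z|^2\varphi' \; + \; C_se^{-\theta t}\sum_{k<s}J_{k,x_0} \; + \; C_se^{-(p-1)\theta t}J_{s,x_0} \; + \; C_se^{-\sqrt\eta x_0}e^{-\theta t}.
\]

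Since $J_{s,x_0}(t)\to 0$ as $t\to+\infty$, I integrate this from $t$ to $+\infty$; the self-referential term $C_se^{-(p-1)\theta t}J_{s,x_0}$ is absorbed by a (backward) Grönwall argument, its coefficient being integrable in time as $p\ge 3$, and an induction on $s$ disposes of the $J_{k,x_0}$, $k<s$. This gives
\[
J_{s,x_0}(t)\le K_s\int_t^{+\infty}\!\int\sum_{|\sigma|\le s+1}|\partial^\sigma z|^2\varphi'\,dx\,dt' \; + \; K_se^{-\sqrt\eta x_0}e^{-\theta t}.
\]
Finally I integrate over $x_0\in[0,+\infty)$, using $\int_0^\infty\varphi(\cdot-x_0)\,dx_0=\varphi_{[1]}(\cdot)$ on the left and, by $(\mathbf{A'})(i)$ and Fubini, $\int_0^\infty\varphi'(\cdot-x_0)\,dx_0=\varphi(\cdot)$ on the right; invoking $(\mathbf{B'}(\overline s+1))$ for $\varphi$ to bound the time integral by $Ce^{-\theta t}$ (valid since $s+1\le\overline s+1$) and noting $\int_0^\infty e^{-\sqrt\eta x_0}\,dx_0=1/\sqrt\eta$, I obtain $\sum_{|\sigma|=s}\int|\partial^\sigma z|^2\varphi_{[1]}(|x|-\beta t)\,dx\le Ce^{-\theta t}$ for every $s\le\overline s$, which is exactly $(\mathbf{B'}(\overline s))$ for $\varphi_{[1]}$. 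I expect the main obstacle to be precisely the bookkeeping of the nonlinear term: the vanishing of its top-order contribution rests entirely on the reality of $\partial_u g(u)$ (gauge invariance), and one must organize every remaining term into one of four admissible types—a $\varphi'$-weighted integral of order $\le s+1$, a lower-order $J_{k,x_0}$, the Grönwall-absorbable self term, or an interaction remainder decaying like $e^{-\sqrt\eta x_0}$—this last decay being indispensable for the final $x_0$-integration to converge.
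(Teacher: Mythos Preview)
Your proof is correct and follows the paper's scheme step for step: same weighted quantities $J_{s,x_0}$, same kinetic estimate via one integration by parts (the paper also remarks that this is why only $\mathscr C^1$ weights are needed, unlike the $\mathscr C^3$ requirement for \eqref{gKdV}), same absorption of the self-referential term after integrating from $t$ to $+\infty$, and the same $x_0$-integration with Fubini to pass from $\varphi$ to $\varphi_{[1]}$.

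The one point of divergence is your handling of the top-order nonlinear piece. You exploit gauge invariance (reality of $\partial_u g$) to make $2\R\int i\,\partial_u g(u)\,|\partial^\sigma z|^2\varphi$ vanish identically; the paper does not use this and instead bounds the full pure-$z$ contribution $\int\partial^\sigma(g(z))\,\overline{\partial^\sigma z}\,\varphi$ crudely via Fa\`a di Bruno and $|\partial^q g(z)|\le C|z|^{(p-q)_+}$, obtaining $Ce^{-\theta t}\sum_{k\le s}J_{k,x_0}$ with the self-term included, and then absorbs it for $t$ large. Your observation is correct and slightly sharper (it yields the coefficient $e^{-(p-1)\theta t}$ rather than $e^{-\theta t}$), but your assertion that the argument ``rests entirely'' on it is overstated: the paper's direct bound already suffices. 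Relatedly, your claim that the remaining pure-$z$ products contribute only $J_{k,x_0}$ with $k<s$ is not quite right, since Fa\`a di Bruno terms with $q\ge 2$ can still place all $s$ derivatives on a single $z$-factor; but these again produce absorbable self-terms, so the argument is unaffected.
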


\begin{proof}

The proof follows the same scheme as that of Proposition \ref{procede_tri}.
We introduce the family of integrals

\[ J_{s,x_0}(t):=\sum_{\sigma\in\NN^d,\;|\sigma|=s}\int_{\RR^d}\left|\partial^\sigma z\right|^2(t,x)\varphi(|x|-x_0-\beta t)\;dx, \]
for all $s\in\NN$ and $x_0>0$.

We then show the following induction formula which makes the link between the functions $J_{s,x_0}$, $s\in\NN$. Here again we denote $\tilde{x}=\tilde{x}(t):=|x|-x_0-\beta t$ (for $x_0>0$ and $t\ge T_0$). 

\begin{Lem}\label{lem_est_rec_J}
For all $s\in\NN$, there exists $C_s\ge 0$ (independent of $x_0$) such that for all $t\ge T_0$:
\begin{equation}\label{est_rec_J}
\left|\frac{d}{dt}J_{s,x_0}(t)\right|\le C_s\int_{\RR^d}\sum_{|\sigma |\le s+1}\left|\partial^{\sigma}z\right|^2(t,x)\varphi'(\tilde{x})\;dx +C_se^{-\theta t}\sum_{k=0}^{s}J_{k,x_0}(t)+C_se^{-\sqrt{\eta}x_0}e^{-\theta t}.
\end{equation}
\end{Lem}

\begin{Rq}
Notice that estimate \eqref{est_rec_J} of $\frac{d}{dt}J_{s,x_0}$ obtained in Lemma \ref{lem_est_rec_J} contains one additional term (namely $e^{-\theta t}J_{s,x_0}(t)$) with respect to the corresponding estimate \eqref{est_der_I_1} obtained in the case of the \eqref{gKdV} equation. This is due to the algebra linked with the structure of the \eqref{NLS} equation; for the gKdV case, we manage to eliminate this term by means of one integration by parts.
\end{Rq}

\begin{proof}
Let us take $s\in\NN$ and $\sigma\in\NN^d$ such that $|\sigma|=s$. Denoting
$e_l$ the $d$-tuple $(0,\dots,1,\dots,0)$ for which all components except the $l$-th one are zero, we compute:
\begin{align*}
\frac{d}{dt}\int_{\RR^d}|\partial^\sigma z(t,x)|^2\varphi(|x|-x_0-\beta t)\;dx=&2\R\int_{\RR^d}\partial_t\overline{\partial^\sigma z}\varphi(\tilde{x})\;dx-\beta\int_{\RR^d}|\partial^\sigma z(t,x)|^2\varphi '(\tilde{x})\;dx \\
=&-2\I\int_{\RR^d}\sum_{l=1}^d \partial^{\sigma+2e_l}z\overline{\partial^\sigma z}\varphi(\tilde{x})\;dx-\beta\int_{\RR^d}|\partial^\sigma z(t,x)|^2\varphi '(\tilde{x})\;dx \\
&-2\I\int_{\RR^d}\partial^\sigma\left(g(z+R)-\sum_{j=1}^Ng(R_j)\right)\overline{\partial^\sigma z}\varphi(\tilde{x})\;dx\\
=&2\I\sum_{l=1}^d\int_{\RR^d}\partial^{\sigma+e_l}z\overline{\partial^\sigma z}\frac{x_l}{|x|}\varphi '(\tilde{x})\;dx-\beta\int_{\RR^d}|\partial^\sigma z(t,x)|^2\varphi '(\tilde{x})\;dx\\
&-2\I\int_{\RR^d}\partial^\sigma\left(g(z+R)-\sum_{j=1}^Ng(R_j)\right)\overline{\partial^\sigma z}\varphi(\tilde{x})\;dx.
\end{align*}
Notice that the last line results from one integration by parts.

\bigskip

Let us explain how to estimate the integrals appearing in the last equality. For all $l=1,\dots,d$,
\begin{equation}\label{est_aux_1}
\left|\I\int_{\RR^d}\partial^{\sigma+e_l}z\overline{\partial^\sigma z}\frac{x_l}{|x|}\varphi '(\tilde{x})\;dx\right|\le \frac{1}{2}\int_{\RR^d}|\partial^{\sigma+e_l} z(t,x)|^2\varphi '(\tilde{x})\;dx +\frac{1}{2}\int_{\RR^d}|\partial^\sigma z(t,x)|^2\varphi '(\tilde{x})\;dx.
\end{equation}

For the last term, let us decompose $g(z+R)-\sum_{j=1}^Ng(R_j)$ as follows:
\[ g(z+R)-\sum_{j=1}^Ng(R_j)=\left[g(z+R)-g(z)-g(R)\right]+[g(z)]+\left[g(R)-\sum_{j=1}^Ng(R_j)\right]. \]
The control of 
\[ \int_{\RR^d}\partial^\sigma\left(g(R)-\sum_{j=1}^Ng(R_j)\right)\overline{\partial^\sigma z}\varphi(\tilde{x})\;dx \]
uses Claim \ref{claim_interaction_NLS} (we refer to the similar control of $I_1$ in the proof of Lemma \ref{lem_der_I}). In this way, we obtain
\begin{equation}\label{est_aux_2}
\left|\int_{\RR^d}\partial^\sigma\left(g(R)-\sum_{j=1}^Ng(R_j)\right)\overline{\partial^\sigma z}\varphi(\tilde{x})\;dx\right|\le Ce^{-\sqrt{\eta}x_0}e^{-\theta t}. 
\end{equation}

By the Faà di Bruno formula, $\partial^\sigma(g(z))$ rewrites as a linear combination of the following terms
\[ \partial^{\sigma_1}z_{j_1}\dots\partial^{\sigma_q}z_{j_q}\frac{\partial^q g}{\partial_x^{r}\partial_y^{q-r}}(z), \]
where $q\in\{0,\dots,|\sigma|\}$, $r\in\{0,\dots,q\}$, $\sum_{m=1}^q|\sigma_m|=|\sigma|$, $j_l\in\{1,2\}$ for $m=1,\dots,q$, and $z_1:=\R (z)$, $z_2:=\I (z)$.

We observe that $\partial^{\sigma_l}z_{j_m}\in L^\infty(\RR^d)$ since $z(t)\in H^\infty(\RR^d)$ and due to the Sobolev embedding $H^s(\RR^d)\hookrightarrow L^\infty(\RR^d)$ available for all $s>\frac{d}{2}$. Now, by means of
\[ \left|\frac{\partial^q g}{\partial_x^{r}\partial_y^{q-r}}(z)\right|\le C|z|^{p-q} \quad\text{if } q\le p \]
and 
\[ \left|\frac{\partial^q g}{\partial_x^{r}\partial_y^{q-r}}(z)\right|\le C \quad\text{if } q> p, \]
one can bound
\begin{equation}\label{est_aux_3}
\left|\int_{\RR^d}\partial^{\sigma}(g(z))\overline{\partial^\sigma(z)}\varphi(\tilde{x})\;dx\right|\le Ce^{-\theta t}\sum_{|\sigma '|\le s}\int_{\RR^d}|\partial^{\sigma '}z|^2\varphi(\tilde{x})\;dx.
\end{equation}

We finally deal with the integral $\int_{\RR^d}\partial^\sigma\left(g(z+R)-g(z)-g(R)\right)\overline{\partial^\sigma(z)}\varphi(\tilde{x})\;dx$.
By the Faà di Bruno formula applied to $\partial^\sigma\left(g(z+R)-g(z)-g(R)\right)$, it suffices to consider each quantity of the form 
\[ \partial^{\sigma_1}(z+R)_{j_1}\dots\partial^{\sigma_q}(z+R)_{j_q}\frac{\partial^q g}{\partial_x^{r}\partial_y^{q-r}}(z+R)-\partial^{\sigma_1}z_{j_1}\dots\partial^{\sigma_q}z_{j_q}\frac{\partial^q g}{\partial_x^{r}\partial_y^{q-r}}(z)-\partial^{\sigma_1}R_{j_1}\dots\partial^{\sigma_q}R_{j_q}\frac{\partial^q g}{\partial_x^{r}\partial_y^{q-r}}(R). \]
(We keep the same notations for indices and differential parameters as above.) 
Now the desired estimation is based on the decomposition
\begin{multline*}
\left[\partial^{\sigma_1}(z+R)_{j_1}\dots\partial^{\sigma_q}(z+R)_{j_q}-\partial^{\sigma_1}z_{j_1}\dots\partial^{\sigma_q}z_{j_q}-\partial^{\sigma_1}R_{j_1}\dots\partial^{\sigma_q}R_{j_q}\right]\frac{\partial^q g}{\partial_x^{r}\partial_y^{q-r}}(z+R)\\
+\partial^{\sigma_1}z_{j_1}\dots\partial^{\sigma_q}z_{j_q}\left[\frac{\partial^q g}{\partial_x^{r}\partial_y^{q-r}}(z+R)-\frac{\partial^q g}{\partial_x^{r}\partial_y^{q-r}}(z)\right]+\partial^{\sigma_1}R_{j_1}\dots\partial^{\sigma_q}R_{j_q}\left[\frac{\partial^q g}{\partial_x^{r}\partial_y^{q-r}}(z+R)-\frac{\partial^q g}{\partial_x^{r}\partial_y^{q-r}}(R)\right]
\end{multline*}
and on Claim \ref{claim_interaction_NLS}.
We have
\begin{equation}\label{est_aux_4}
\left|\int_{\RR^d}\partial^\sigma\left(g(z+R)-g(z)-g(R)\right)\overline{\partial^\sigma(z)}\varphi(\tilde{x})\;dx\right|\le Ce^{-2\theta t}e^{-\sqrt{\eta}x_0}.
\end{equation}

We obtain the Lemma by gathering the previous estimates \eqref{est_aux_1}, \eqref{est_aux_2}, \eqref{est_aux_3}, and \eqref{est_aux_4}.
 
\end{proof}

Let us now go on with the proof of Proposition \ref{prop_P'_NLS}. By integration of estimate
\begin{equation}\label{est_rec_J_bis}
\left|\frac{d}{dt}J_{s,x_0}(t)\right|\le C_s\int_{\RR^d}\sum_{|\sigma |\le s+1}\left|\partial^{\sigma}z\right|^2(t,x)\varphi'(\tilde{x})\;dx +C_se^{-\theta t}\sum_{k=0}^{s}J_{k,x_0}(t)+C_se^{-\sqrt{\eta}x_0}e^{-\theta t}
\end{equation}
 (which directly follows from \eqref{est_rec_J} and the definition of $J_{s,x_0}$) between $t$ and $+\infty$, the term $e^{-\theta t}\sup_{t'\ge t}J_{s,x_0}(t')$ can be absorbed for large values of $t$ and we are lead to a similar result to that of Lemma \ref{lem_est_I}: for all $t$ sufficiently large,
\begin{equation}
J_{s,x_0}(t)\le K_s\int_{t}^{+\infty}\int_{\RR}\left(\sum_{k=0}^{s+1}\left|\partial_x^{k}z\right|^2(t',x)\varphi'(\tilde{x}(t'))\right)\;dx\;dt'+K_se^{-\sqrt{\eta}x_0}e^{-\theta t}.
\end{equation}
for some constant $K_s$ independent of $x_0$. 

By this means, we can complete the proof of Proposition \ref{prop_P'_NLS}, and then the algebraic decay of $z$, as it was done for the \eqref{gKdV} multi-solitons in Section \ref{subsec_rapid_dec}.

\end{proof}

\appendix
\section{Appendix}

\subsection{Growth of the \texorpdfstring{$H^s$}{Hs} norms of the 1-D solitons}

The purpose of this appendix is to make the constants more explicit as stated in Section \ref{sec_decay_gkdv_right}. We restrict to the space dimension 1 and monomial nonlinearity.

\begin{Prop}\label{prop_est_Hs_solitons}
For all $\mu>\sqrt{p}$, there exists $s_0$ such that for all $s\ge s_0$,
\begin{equation}
\|Q\|_{H^s}\le 2^{\mu^s}.
\end{equation}
 \end{Prop}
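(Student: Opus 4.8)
The plan is to turn the stationary equation $Q'' = Q - Q^p$ into a recursion on the norms of the derivatives $Q^{(s)}$, and then to close this recursion by a (two-step) strong induction producing the double-exponential bound. Differentiating the equation $s$ times gives
\begin{equation*}
Q^{(s+2)} = Q^{(s)} - (Q^p)^{(s)}, \qquad (Q^p)^{(s)} = \sum_{i_1+\cdots+i_p=s}\binom{s}{i_1,\dots,i_p}\, Q^{(i_1)}\cdots Q^{(i_p)}.
\end{equation*}
To handle the nonlinear product without losing derivatives I would work with the weighted sup-norms $a_s := \big\| e^{|x|/2} Q^{(s)}\big\|_{L^\infty}$, which are finite because, by the explicit expression of $Q$, every $Q^{(s)}$ decays like $e^{-|x|}$. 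Since $e^{|x|/2}\ge 1$, the quantity $a_s$ controls $\|Q^{(s)}\|_{L^\infty}$, and the product rule for this weight (put the weight on one factor and bound the remaining ones in $L^\infty$) yields the recursion
\begin{equation*}
a_{s+2} \le a_s + \sum_{i_1+\cdots+i_p=s}\binom{s}{i_1,\dots,i_p}\, a_{i_1}\cdots a_{i_p}.
\end{equation*}

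Next, I fix any $\nu$ with $\sqrt p < \nu < \mu$ and prove by strong induction that there is a constant $A\ge 1$ with $a_s \le A\, 2^{\nu^s}$ for all $s$, the finitely many initial values being absorbed into $A$. For the inductive step I bound each factor crudely by $a_{i_j} \le A\, 2^{\nu^{i_j}} \le A\, 2^{\nu^s}$, so that the whole product is at most $A^p 2^{p\nu^s}$, while the total weight of the multinomial coefficients is $\sum_{i_1+\cdots+i_p=s}\binom{s}{i_1,\dots,i_p} = p^s$. This gives $a_{s+2} \le A\,2^{\nu^s} + p^s A^p\, 2^{p\nu^s}$, and the target $a_{s+2}\le A\, 2^{\nu^{s+2}}=A\,2^{\nu^2\nu^s}$ follows once
\begin{equation*}
s\log_2 p + (p-1)\log_2 A + 1 \le (\nu^2-p)\,\nu^s .
\end{equation*}
It is exactly here that the hypothesis is used: because $\nu^2>p$ the right-hand side grows like $\nu^s$, which dominates the linear-in-$s$ left-hand side, so the inequality holds for all $s$ past some threshold $s_1$, the remaining cases being covered by enlarging $A$. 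This is the one place where the constant $\sqrt p$ is forced, namely by the crude product estimate bounding every factor by $2^{\nu^s}$; note that the exponent base $\mu>\sqrt p$ is consistent with Claim \ref{claim_interaction} and \eqref{est_lambda_s}.

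Finally I would convert back to the Sobolev norm. Using $e^{|x|/2}\ge 1$ and integrability of the weight, $\|Q^{(s)}\|_{L^2}^2 \le \|Q^{(s)}\|_{L^\infty}\|Q^{(s)}\|_{L^1} \le a_s\,\big(\textstyle\int e^{-|x|/2}\,dx\big)\,a_s = 4\, a_s^2$, hence $\|Q\|_{H^s}^2 = \sum_{k=0}^s \|Q^{(k)}\|_{L^2}^2 \le 4(s+1)\,A^2\,2^{2\nu^s}$. Since $\mu>\nu$, one has $4(s+1)A^2\,2^{2\nu^s}\le 2^{2\mu^s}$ for $s$ large, so $\|Q\|_{H^s}\le 2^{\mu^s}$ for $s\ge s_0$, as claimed. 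The genuinely delicate points are the bookkeeping of the induction — the two-step (parity) structure of the recursion and the interplay between the threshold $s_1$ and the constant $A$ — together with verifying finiteness of the weighted norms; the rest is routine.
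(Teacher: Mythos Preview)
Your approach is sound and parallels the paper's: both differentiate $Q''=Q-Q^p$ to obtain the same multinomial recursion and exploit that the product of $p$ factors, each bounded by $2^{\nu^s}$, costs $2^{p\nu^s}$ while the target is $2^{\nu^2\nu^s}$, forcing $\nu^2>p$. The only substantive difference is that the paper works directly with $\|Q\|_{H^s}$ (using $H^1\hookrightarrow L^\infty$ to bound all but one factor), whereas you pass through the weighted sup-norms $a_s=\|e^{|x|/2}Q^{(s)}\|_{L^\infty}$; this is a harmless variation.

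There is, however, a real gap in your induction, precisely at the point you flag. You want $a_s\le A\,2^{\nu^s}$ for all $s$, with the inductive step valid for $s\ge s_1$ and the base cases $s\le s_1+1$ ``absorbed into $A$''. But your threshold $s_1$ is determined by $(\nu^2-p)\nu^{s}\ge s\log_2 p+(p-1)\log_2 A+1$, so $s_1$ grows with $A$; enlarging $A$ to cover more base cases pushes $s_1$ further out, and you have given no a~priori control on $a_{s_1+1}$ that would let this bootstrap close. Merely knowing $a_s<\infty$ for each $s$ is not enough.

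The paper sidesteps this by \emph{iterating} the recursion rather than postulating an ansatz: from $\|Q\|_{H^{s+2}}\le Cp^{s}\|Q\|_{H^s}^p$ one gets, after $\lfloor s/2\rfloor$ steps, $\log_2\|Q\|_{H^s}=O(p^{s/2})$, and then any $\mu>\sqrt p$ eventually dominates. The same fix works in your setting: from $a_{s+2}\le a_s+p^s(\max_{k\le s}a_k)^p$ iterate directly to obtain $\log_2\big(\max_{k\le s}a_k\big)=O(p^{s/2})$, hence $a_s\le 2^{\nu^s}$ for all large $s$ with no constant $A$ to tune. With that replacement your argument is complete.
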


\begin{proof}
Differentiating the fundamental equation satisfied by $Q$, that is $Q''+Q^p=Q$, we obtain the following induction formula:
\[ \forall s\in\NN,\quad Q^{(s+2)}=Q^{(s)}-\sum_{i_1+\dots+i_p=s}\dbinom{s}{i_1,\dots,i_p}Q^{(i_1)}\dots Q^{(i_p)}. \]
Let us observe that for $i_1,\dots,i_p\in\NN$ such that $i_1+\dots +i_p=s$,
\begin{itemize}
\item
if there exists $j\in\{1,\dots,p\}$ such that $i_j=s$, then
\begin{align*}
 \int_\RR\left(Q^{(i_1)}\dots Q^{(i_p)}\right)^2\;dx & =\int_\RR\left(Q^{(s)}\right)^2 Q^{2(p-1)}\;dx \\
&  \le \|Q\|_{L^\infty}^{2(p-1)}\int_\RR\left(Q^{(s)}\right)^2 \;dx \le C\|Q\|_{H^1}^{2(p-1)}\|Q\|_{H^s}^2
\end{align*}
($C$ being a constant depending only on $p$);
\item if for all $j\in\{1,\dots,p\}$ such that $i_j\le s-1$, then
\begin{align*}
 \int_\RR\left(Q^{(i_1)}\dots Q^{(i_p)}\right)^2\;dx & \le \prod_{k=2}^p\|Q^{(i_k)}\|^2_{L^\infty}\int_\RR\left(Q^{(i_1)}\right)^2 \;dx\\
&\le C\prod_{k=2}^p\|Q\|_{H^{i_{k}+1}}^{2}\|Q\|_{H^{i_1}}^2 \le C\|Q\|_{H^s}^{2(p-1)}\|Q\|_{H^{s-1}}^2,
\end{align*}
($C$ being again a constant depending only on $p$, which can change from one line to the other).
\end{itemize}

Thus for $s\in\NN^*$, 
\[ \|Q^{(s+2)}\|_{L^2}\le C\left(\|Q^{(s)}\|_{L^2}+p^s\|Q\|_{H^s}^p\right), \]
which implies 
\[ \|Q\|_{H^{s+2}}\le Cp^s\|Q\|_{H^s}^p, \]
 with a constant $C$ depending only on $p$.

We finally obtain
\begin{align*}
\|Q\|_{H^s}&\le C^{\frac{p^{\lfloor\frac{s}{2}\rfloor}-1}{p-1}}p^{p^{\lfloor\frac{s}{2}\rfloor} s}\|Q\|_{H^1} \le 2^{\mu^s},
\end{align*}
with $\mu>\sqrt{p}$ and $s$ sufficiently large.
\end{proof}

\begin{Rq}
As a corollary of Proposition \ref{prop_est_Hs_solitons}, we also obtain the existence of a constant $C$ depending on $p$ and the soliton parameters such that for all $s\in\NN$,
\begin{equation}
\left\|\left(\sum_{j=1}^NR_j(t)\right)^p-\sum_{j=1}^NR_j(t)^p\right\|_{H^s}\le Cp^s\max_{j=1,\dots,N}\|R_j\|_{H^s}^pe^{-2\theta t}.
\end{equation}
\end{Rq}

\subsection{Proof of estimate \texorpdfstring{\eqref{est_lambda_s}}{(3.3)}} \label{app:a3}

The goal is here to make explicit the constants appearing in the computations done by Martel \cite[Section 3.4]{martel} in the proof of smoothness of the multi-solitons. 

We thus repeat the arguments developed by Martel (presented slightly differently), keeping track of the growth of the constant $\lambda_s$ (with respect to $s$) such that 
\[ \forall t\ge T_0, \quad \|z(t)\|_{H^s}\le \lambda_se^{-\theta t}. \]

\begin{proof}
We consider regularity indices $s\ge 5$, as that case makes the argument easier: the point being that the exponential decay rate $\theta$ does not change when we go from the estimation of $\|z\|_{H^{s-1}}$ to that of $\|z\|_{H^s}$; there is a loss for $s=2$ (treated in detail in \cite{martel}), which can be avoided for $s=3,4$ using an extra argument, see the footnote below in the proof.

The starting point is to study the variations of
\[ \frac{d}{dt}\int_{\RR}\left(\partial_x^{s}z\right)^2\;dx=2\int_\RR\partial_x^s\left(\left(z+R\right)^p-\sum_{j=1}^NR_j^p\right)\partial_x^{s+1}z\;dx. \]

Thus, the terms which have to be controlled are the source term (involving $z$ only linearly)
\[ 2\int_{\RR}\partial_x^{s+1}\left(R^p-\sum_{j=1}^NR_j^p\right)\partial_x^sz\;dx. \]
and
\begin{gather*}
-2\int_{\RR}\partial_x^{s+1}\left(\sum_{k=1}^p\dbinom{p}{k}z^kR^{p-k}\right)\partial_x^sz\;dx= -2\sum_{k=1}^p\dbinom{p}{k}\sum_{i_1+\dots+i_p=s+1}\dbinom{s+1}{i_1,\dots,i_p}I_{k,i_1,\dots,i_p},
\\
\text{where} \quad I_{k,i_1,\dots,i_p}=\int_\RR\partial_x^{i_1}z\dots \partial_x^{i_k}z\partial_x^{i_{k+1}}R\dots\partial_x^{i_p}R\partial_x^sz\;dx. 
\end{gather*}
\emph{(i)} For $k=1$, integrating by parts, we have
\begin{align}
\MoveEqLeft
\sum_{i_1+\dots+i_p=s+1}\dbinom{s+1}{i_1,\dots,i_p}\int_\RR\partial_x^{i_1}z\partial_x^{i_2}R\dots\partial_x^{i_p}R\partial_x^sz\;dx \nonumber \\
& = \int_\RR\partial_x^{s+1}zR^{p-1}\partial_x^sz\;dx+(s+1)(p-1)\int_\RR \partial_x^sz\partial_xRR^{p-2}\partial_x^sz\;dx \nonumber \\
& \qquad +\sum_{\underset{\forall j,i_j\le s-1}{i_1+\dots+i_p=s+1}}\int_\RR\partial_x^{i_1}z\partial_x^{i_2}R\dots\partial_x^{i_p}R\partial_x^sz\;dx \nonumber \\
& = \frac{2s+1}{2}\int_\RR\left(\partial_x^sz\right)^2\partial_x(R^{p-1})\;dx+\sum_{\underset{\forall j,i_j\le s-1}{i_1+\dots+i_p=s+1}}\int_\RR\partial_x^{i_1}z\partial_x^{i_2}R\dots\partial_x^{i_p}R\partial_x^sz\;dx. \label{term1}
\end{align}
The first integral
\begin{equation} \label{term2}
\frac{2s+1}{2} \int_\RR\left(\partial_x^sz\right)^2\partial_x(R^{p-1})\;dx
\end{equation}
can not be bounded directly in a suitable way. The key idea in \cite{martel} is to add a lower order term
\[ \frac{2s+1}{3}p\int_{\RR}\left(\partial_x^{s-1}z\right)^2R^{p-1}\;dx \]
whose variation at leading order will precisely cancel \eqref{term2}. Thus we are lead to consider
\[ F_s(t):=\int_{\RR}\left(\partial_x^{s}z\right)^2\;dx-\frac{2s+1}{3}p\int_{\RR}\left(\partial_x^{s-1}z\right)^2R^{p-1}\;dx. \]
Going back to \eqref{term1}, we bound
\begin{equation}
\label{est:term1}
\left|\sum_{\underset{\forall j,i_j\le s-1}{i_1+\dots+i_p=s+1}}\int_\RR\partial_x^{i_1}z\partial_x^{i_2}R\dots\partial_x^{i_p}R\partial_x^sz\;dx\right|\le  Cp^{s+1}\|z\|_{H^{s-1}}^2\|R\|_{H^s}^{p-1}.
\end{equation}

\emph{(ii)}  Let us now consider the case where $2\le k\le p$. If there exists $j\in\left\{1,\dots,k\right\}$ such that $i_j=s+1$, then integrating by parts,
\begin{align}
| I_{k,i_1,\dots,i_p}| & = \frac{1}{2} \left| \int_\RR\left(\partial_x^sz\right)^2\partial_x\left(z^{k-1}R^{p-k}\right)\;dx \right| \nonumber \\
& \le \frac{1}{2}\left\|\partial_x\left(R^{p-k}z^{k-1}\right)\right\|_{L^\infty}\|z\|_{H^s}^2 \le \frac{1}{2}\|R\|_{H^2}^{p-k}\|z\|_{H^2}^{k-1}\|z\|_{H^s}^2. \label{est:term2}
\end{align}
If there exists $j\in\left\{1,\dots,k\right\}$ such that $i_j=s$, then 
\begin{equation}
|I_{k,i_1,\dots,i_p}|\le C\|z\|_{H^2}^{k-1}\|R\|_{H^2}^{p-k}\int_{\RR}\left(\partial_x^sz\right)^2\;dx. \label{est:term3}
\end{equation}

If there exists $j\in\left\{1,\dots,k\right\}$ such that $i_j=s-1$, then for all $j'\in\{1,\dots,k\}$ such that $j'\ne j$, $i_{j'}\le \sum_{l=1}^pi_l-s+1\le 2\le s-3$ (by the choice of $s\ge 5$)\footnote{If $s=3$ or $4$, a term which is cubic in $\partial_x^{s-1} z$, of the type
\[ \int (\partial_x^{s-1} z)^3 P(R_j, z, \partial_x z)\; dx \]
can occur, where $P$ is some function (but there are no terms with higher power of $\partial_x^{s-1} z$). Via the Gagliardo-Nirenberg inequality, it can be bounded by
\[ \left( \| z \|_{H^s}^{1/6} \| z \|_{H^{s-1}}^{5/6} \right)^3 \| P(R_j,z,\partial_x z) \|_{L^\infty} \lesssim C(1+\| z \|_{H^2})^{p-2} e^{-5\theta/2 t}  \| z \|_{H^s}^{1/2},   \]
the point being that the decay rate in $\theta$ is greater than $2$: one can then complete the estimates as written here for $s \ge5$.
}. Hence, integrating by parts,

\begin{align}
|I_{k,i_1,\dots,i_p}| &= \left|-\frac{1}{2}\int_\RR\left(\partial_x^{s-1}z\right)^2\partial_x\left(\partial_x^{i'_1}z\dots\partial_x^{i'_{k-1}}z\partial_x^{i_{k+1}}R\dots\partial_x^{i_p}R\right)\;dx \right| \nonumber \\
&\le \frac{1}{2}\|z\|_{H^{s-1}}^2\left\|\partial_x\left(\partial_x^{i'_1}z\dots\partial_x^{i'_{k-1}}z\partial_x^{i_{k+1}}R\dots\partial_x^{i_p}R\right)\right\|_{L^\infty} \nonumber \\
& \le C\|z\|_{H^{s-1}}^{k+1}\|R\|_{H^{s+1}}^{p-k}, \label{est_appendix}
\end{align}
where $C$ is a universal constant depending only on $p$.

In the other cases, $i_1,\dots,i_{k}\le s-2$ and so
\begin{align}
|I_{k,i_1,\dots,i_p}|&\le\; \left|-\int_\RR\partial_x^{s-1}z\partial_x\left(\partial_x^{i_1}z\dots\partial_x^{i_k}z\partial_x^{i_{k+1}}R\dots\partial_x^{i_p}R\right)\;dx\right| \le  \|z\|_{H^{s-1}}^{k+1}\|R\|_{H^{s-1}}^{p-k}. \label{est:term4}
\end{align}

\emph{(iii)} For the source term, by integration by parts,
\begin{align*}
\left|2\int_{\RR}\partial_x^{s+1}\left(R^p-\sum_{j=1}^NR_j^p\right)\partial_x^sz\;dx\right|
&\le C\|z\|_{H^{s-1}}\left\|R^p-\sum_{j=1}^NR_j^p\right\|_{H^{s+2}}\\
&\le C\lambda_{s-1}e^{-\theta t}\left\|R^p-\sum_{j=1}^NR_j^p\right\|_{H^{s+2}}.
\end{align*}

\emph{(iv)} When computing the time differential of $\displaystyle \int_{\RR}\left(\partial_x^{s-1}z\right)^2R^{p-1}\;dx$, as mentioned (and by construction), one term cancels \eqref{term2} and the others are bounded as in $(i)$ and $(ii)$.

\bigskip

Gathering the bounds \eqref{est:term1},  \eqref{est:term2},  \eqref{est:term3}, \eqref{est_appendix},  \eqref{est:term4}, it results that
\begin{align}
\left|\frac{d}{dt}F_s(t)\right|\le & \; C\lambda_{s-1}e^{-\theta t}\left\|R^p-\sum_{j=1}^NR_j^p\right\|_{H^{s+1}} +C\sum_{k=2}^p\dbinom{p}{k}\|z\|_{H^s}^2p^{s+1}\|z\|_{H^2}^{k-1}\|R\|_{H^s}^{p-k} \nonumber \\
& \quad +C\sum_{k=2}^p\dbinom{p}{k}p^{s+1}\|z\|_{H^{s-1}}^{k+1}\|R\|_{H^s}^{p-k} \nonumber \\
& \le C\lambda_{s-1}e^{-\theta t}p^{s+2}2^{\mu^{s+2}p}e^{-2\theta t}+C\|z\|_{H^s}^2e^{-2\theta t}p^{s+1}2^p2^{\mu^s(p-2)} \nonumber  \\
& \quad +Ce^{-3\theta t}p^{s+1}2^p2^{\mu^s(p-2)}\lambda_{s-1}^{p+1} \nonumber \\
& \le  C\left(p^s2^{\mu^sp}e^{-2\theta t}\|z\|_{H^s}^2+p^s2^{\mu^sp}\lambda_{s-1}^{p+1}e^{-3\theta t}\right). \label{est_Fs}
\end{align}

Moreover, we have by definition of $F_s$,
\begin{align*}
\int_{\RR}\left(\partial_x^{s}z\right)^2\;dx&\le |F_s|+Cs\|R\|_{H^2}^{p-1}\|\partial_x^{s-1}z\|_{L^2}^2 \le  |F_s|+Cs\lambda_{s-1}^2e^{-2\theta t}.
\end{align*}

Hence, 
\[ \left|\frac{d}{dt}F_s(t)\right|\le Cp^s2^{\mu^sp}e^{-2\theta t}\left(|F_s(t)|+s\lambda_{s-1}^2e^{-2\theta t}\right)+Cp^s2^{\mu^sp}\lambda_{s-1}^{p+1}e^{-3\theta t}. \]

By integration we finally obtain
\[ |F_s(t)|\le Cp^{s}2^{\mu^sp}\lambda_{s-1}^{p+1}e^{-2\theta t} \]
and thus
\[ \|z\|_{H^{s}}^2\le Cp^{s}2^{\mu^sp}\lambda_{s-1}^{p+1}e^{-2\theta t}. \]
We can therefore take
\begin{equation}
\lambda_{s}\le Cp^{\frac{s}{2}}2^{\frac{p}{2}\mu^s}\lambda_{s-1}^{\frac{p+1}{2}},
\end{equation}
which yields via an induction
\[ \lambda_{s}\le C^{\sum_{k=0}^{s-1}\left(\frac{p+1}{2}\right)^k}p^{\sum_{k=0}^{s-1}\frac{s-k}{2}\left(\frac{p+1}{2}\right)^k}\lambda_0^{\left(\frac{p+1}{2}\right)^{s}}2^{\frac{p}{2}\sum_{k=0}^{s-1}\mu^{s-k}\left(\frac{p+1}{2}\right)^k}, \]
and so, 
\begin{align} \label{est_appendix_lambda}
\lambda_{s}&\le C^{2\left(\frac{p+1}{2}\right)^{s}}p^{\frac{s}{2}\left(\frac{p+1}{2}\right)^{s-1}}2^{\left(\max\{\mu,\frac{p+1}{2}\}\right)^s}\lambda_0^{\left(\frac{p+1}{2}\right)^{s}} \le 2^{\mu_0^s},
\end{align}
with $\mu_0>\max\{\mu,\frac{p+1}{2}\}>\max\left\{\sqrt{p},\frac{p+1}{2}\right\}$ and $s$ sufficiently large (depending on $\mu_0$). 
\end{proof}

\begin{Rq}
Note that we can refine a bit \eqref{est_appendix} but the final estimate concerning $\lambda_s$ would not be better than \eqref{est_appendix_lambda}, considering that $\mu_0$ is to be chosen strictly greater than $\max\{\sqrt{p},\frac{p+1}{2}\}$.
\end{Rq}

\subsection{Details concerning Remarks \ref{rq_exp} and \ref{rq_surpoly}}\label{subs_appendix_3}

Let $s\in\NN$. In order to ensure that $\int_{x>\beta t}\left(\partial_x^sz\right)^2e^{\epsilon (x-\beta t)}\;dx$ is finite for some $\epsilon>0$, it suffices by \eqref{eq_poly_n_1} that
\[ \sum_{n=0}^{+\infty}\int_\RR \left(\partial_x^sz\right)^2\epsilon^n\varphi_{[n]}\;dx \]
 is finite. Thus it suffices that the series $\sum_{n\ge 0}C(s,\varphi_{[n]})\epsilon^n$ converges. 

This condition is satisfied under the following assumptions:
\begin{equation}\label{cond_dec_exp}
C(s,\varphi_{[1]}) \le c_0C(s+1,\varphi) \quad \text{and} \quad  \lambda_s \le \tilde{c_0}^s.
\end{equation}
Indeed, if we assume \eqref{cond_dec_exp}, we obtain
\[ C(s,\varphi_{[n]})\le c_0^n\tilde{c_0}^{2(s+n)}\le \tilde{c_0}^{2s}(c_0\tilde{c_0}^2)^n, \]
which guarantees the existence of $\epsilon>0$ such that the series $\sum_{n\ge 0}C(s,\varphi_{[n]})\epsilon^n$ converges.

\bigskip

From Proposition \ref{procede_tri}, we also deduce, proceeding step by step, that:
\begin{align*}
C(s,\varphi_{[n]})&\le c(\eta,\kappa_1,\kappa_2)2^{\mu_0^s}C(s+1,\varphi)\\
&\le  c(\eta,\kappa_1,\kappa_2)^n2^{\mu_0^s+\dots+\mu_0^{s+n-1}}C(s+n,\varphi)\\
&\le  c(\eta,\kappa_1,\kappa_2)^n2^{n\mu_0^{s+n-1}}\lambda_{s+n}^2.
\end{align*}
Taking $\tilde{\mu_0}>\mu_0$, this shows that the integral
\[ \int_{x \ge \beta t}\left(\partial_x^sz\right)^2(t,x)\left(\sum_{n=0}^{+\infty}\frac{(x-\beta t)^n}{2^{\tilde{\mu_0}^{s+n}}}\right)\;dx \]
is finite.

\bibliographystyle{plain}
\bibliography{references_dec_gkdv}

\begin{thebibliography}{10}

\bibitem{berestycki}
Henri Berestycki and Pierre-Louis Lions.
\newblock {Non linear scalar field equations, I. Existence of a ground state}.
\newblock {\em Arch. Ration. Mech. Anal.}, 82(4):313--345, 1983.

\bibitem{chen}
Gong Chen and Jiaqi Liu.
\newblock {Soliton resolution for the focusing modified KdV equation}.
\newblock {\em Annales de l'Institut Henri Poincaré C, Analyse non linéaire},
  38(6):2005--2071, 2021.

\bibitem{combetgkdv}
Vianney Combet.
\newblock {Multi-soliton solutions for the supercritical gKdV equations}.
\newblock {\em Comm. Partial Differ. Equ.}, 36(3):380--419, 2011.

\bibitem{cf}
Rapha{\"e}l C{\^o}te and Xavier Friederich.
\newblock On smoothness and uniqueness of multi-solitons of the non-linear
  schrödinger equations.
\newblock {\em Communications in Partial Differential Equations},
  46(12):2325--2385, 2021.

\bibitem{cmm}
Raphaël Côte, Yvan Martel, and Frank Merle.
\newblock {Construction of multi-soliton solutions for the $L^2$-supercritical
  gKdV and NLS equations}.
\newblock {\em Rev. Mat. Iberoam.}, 27(1):273--302, 2011.

\bibitem{eckhaus}
Wiktor Eckhaus and Peter~C. Schuur.
\newblock {The emergence of solutions for the generalized Korteweg-de Vries
  equation from arbitrary initial conditions}.
\newblock {\em Math. Meth. Appl. Sci.}, 5:97--116, 1983.

\bibitem{friederich}
Xavier Friederich.
\newblock {Non dispersive solutions of the generalized Korteweg-de Vries
  equations are typically multi-solitons}.
\newblock {\em Annales de l'Institut Henri Poincaré C, Analyse non linéaire},
  2021.

\bibitem{friederich_these}
Xavier Friederich.
\newblock {\em {Propriétés qualitatives des multi-solitons}}.
\newblock Phd thesis, {University of Strasbourg}, June 2021.

\bibitem{ggkm}
Clifford~S. Gardner, John~M. Greene, Martin~D. Kruskal, and Robert~M. Miura.
\newblock {Korteweg-de Vries equation and generalizations. VI. Methods for
  exact solutions}.
\newblock {\em Communications on Pure and Applied Mathematics}, 27(1):97--133,
  1974.

\bibitem{hirota}
Ryogo Hirota.
\newblock {Exact solution of the Korteweg-de Vries equation for multiple
  collisions of solitons}.
\newblock {\em Phys. Rev. Lett.}, 27:1192--1194, Nov 1971.

\bibitem{isaza}
Pedro Isaza, Felipe Linares, and Gustavo Ponce.
\newblock {On decay properties of solutions of the $k$-generalized KdV
  equation}.
\newblock {\em Communications in Mathematical Physics}, 324(7):129--146, 2013.

\bibitem{isaza1}
Pedro Isaza, Felipe Linares, and Gustavo Ponce.
\newblock {On the propagation of regularity and decay of solutions to the
  $k$-generalized Korteweg-de Vries equation}.
\newblock {\em Communications in Partial Differential Equations},
  40(7):1336--1364, 2015.

\bibitem{kato_smoothingeffect}
Tosio Kato.
\newblock {On the Cauchy problem for the (generalized) Korteweg-de Vries
  equation}.
\newblock {\em Studies in Applied Mathematics Adv. Math. Supplementary
  Studies}, 18:93--128, 1983.

\bibitem{lamb}
G.L. Lamb.
\newblock {\em {Elements of soliton theory}}.
\newblock John Wiley \& Sons, New York, 1980.

\bibitem{laurent}
Céline Laurent and Yvan Martel.
\newblock {Smoothness and exponential decay of $L^2$-compact solutions of the
  generalized KdV equations}.
\newblock {\em Comm. Partial Differ. Equ.}, 29:157--171, 2005.

\bibitem{martel}
Yvan Martel.
\newblock {Asymptotic N-soliton-like solutions of the subcritical and critical
  generalized Korteweg-de Vries equations}.
\newblock {\em American Journal of Mathematics}, 127(5):1103--1140, 2005.

\bibitem{asympstabcrit}
Yvan Martel and Frank Merle.
\newblock {A Liouville theorem for the critical generalized Korteweg-de Vries
  equation}.
\newblock {\em J. Math. Pures Appl.}, 79:339--425, 2000.

\bibitem{asympstab0}
Yvan Martel and Frank Merle.
\newblock {Asymptotic stability of solitons for subcritical gKdV equations}.
\newblock {\em Arch. Ration. Mech. Anal.}, 157:219--254, 2001.

\bibitem{martel_merle_NLS}
Yvan Martel and Frank Merle.
\newblock {Multisolitary waves for nonlinear Schrödinger equations}.
\newblock {\em Annales de l'Institut Henri Poincaré, C Analyse non linéaire},
  23:849--864, 2006.

\bibitem{collision}
Yvan Martel and Frank Merle.
\newblock {Description of two soliton collision for the quartic gKdV equation}.
\newblock {\em Ann. Math.}, 174(2):757--857, 2011.

\bibitem{inelastic}
Yvan Martel and Frank Merle.
\newblock Inelastic interaction of nearly equal solitons for the quartic
  g{K}d{V} equation.
\newblock {\em Invent. Math.}, 183(3):563--648, 2011.

\bibitem{nonexistence}
Yvan Martel and Frank Merle.
\newblock {On the nonexistence of pure multi-solitons for the quartic gKdV
  equation}.
\newblock {\em Int. Math. Res. Not. IMRN}, (3):688--739, 2015.

\bibitem{merle_NLS_crit}
Frank Merle.
\newblock {Construction of solutions with exactly $k$ blow-up points for the
  Schrödinger equation with critical nonlinearity}.
\newblock {\em Comm. Math. Phys.}, 129:223--240, 1990.

\bibitem{miura}
Robert~M. Miura.
\newblock {The Korteweg-de Vries equation: a survey of results}.
\newblock {\em SIAM Review}, 18(3):412--459, 1976.

\bibitem{munoz}
Claudio Muñoz.
\newblock {On the inelastic 2-soliton collision for gKdV equations with general
  nonlinearity}.
\newblock {\em Int. Math. Research Notices}, 9:1624--1719, 2010.

\bibitem{Mun11}
Claudio Muñoz.
\newblock {Soliton dynamics for generalized KdV equations in a slowly varying
  medium}.
\newblock {\em Analysis and PDEs}, 4(4):573--638, 2011.

\bibitem{schuur}
Peter~C. Schuur.
\newblock {\em {Asymptotic Analysis of Solitons Problems}}.
\newblock Berlin: Springer-Verlag, 1986.

\end{thebibliography}
\nocite{*}

\end{document}